

\documentclass[reqno]{amsart}



\usepackage{amsmath}
\usepackage{amssymb}
\usepackage{amsfonts}
\usepackage{amscd}
\usepackage{amsthm}

\usepackage[ansinew]{inputenc}

\usepackage[english]{babel}

\usepackage{enumerate}
\usepackage{multirow}

\usepackage{graphicx}

\usepackage{url}
\usepackage[all]{xy}
\usepackage{hyperref}
\hypersetup{ 
  pdfauthor   = {Reynald Lercier, Christophe Ritzenthaler, Jeroen Sijsling}
  pdftitle    = {Explicit Galois obstruction and descent}
  pdfsubject  = {},
  pdfkeywords = {},
  backref=true, pagebackref=true, hyperindex=true, colorlinks=true,
  breaklinks=true, urlcolor=blue, linkcolor=blue, citecolor=blue, bookmarks=true,
  bookmarksopen=true}

\usepackage{cleveref}
\makeatletter
\newcommand\footnoteref[1]{\protected@xdef\@thefnmark{\ref{#1}}\@footnotemark}
\makeatother

\usepackage[ruled,vlined,boxed,linesnumbered,norelsize]{algorithm2e}

\allowdisplaybreaks

\theoremstyle{plain}
\newtheorem{theorem}{Theorem}[section]

\newtheorem{proposition}[theorem]{Proposition}
\newtheorem{lemma}[theorem]{Lemma}
\newtheorem{corollary}[theorem]{Corollary}

\theoremstyle{definition}
\newtheorem{definition}[theorem]{Definition}

\theoremstyle{remark}
\newtheorem{remark}[theorem]{Remark}
\newtheorem{example}[theorem]{Example}
\newtheorem{algo}[theorem]{Algorithm}

\makeatletter

\newcounter{myequation}[equation]

\numberwithin{equation}{section}

\def\ie{\textit{i.e.}}

\DeclareMathOperator{\Aut}{Aut}

\DeclareMathOperator{\Gal}{Gal}
\DeclareMathOperator{\GL}{GL}
\DeclareMathOperator{\Hom}{Hom}

\DeclareMathOperator{\PGL}{PGL}

\DeclareMathOperator{\SL}{SL}

\DeclareMathOperator{\Nm}{Nm}

\newcommand{\Bm}{\mathsf{B}}
\newcommand{\Cm}{\mathsf{C}}

\newcommand{\Qm}{\mathsf{Q}}

\newcommand{\Rm}{\mathsf{R}}

\newcommand{\Xm}{\mathsf{X}}
\newcommand{\Ym}{\mathsf{Y}}

\newcommand{\AAb}{\mathbb{A}}

\newcommand{\CC}{\mathbb{C}}

\newcommand{\NN}{\mathbb{N}}
\newcommand{\PP}{\mathbb{P}}
\newcommand{\QQ}{\mathbb{Q}}
\newcommand{\RR}{\mathbb{R}}

\newcommand{\ZZ}{\mathbb{Z}}

\newcommand{\CG}{\mathbf{C}}
\newcommand{\DG}{\mathbf{D}}

\newcommand{\dD}{\mathcal{D}}

\newcommand{\lL}{\mathcal{L}}
\newcommand{\mM}{\mathcal{M}}

\newcommand{\rR}{\mathcal{R}}
\newcommand{\sS}{\mathcal{S}}
\newcommand{\tT}{\mathcal{T}}
\newcommand{\uU}{\mathcal{U}}

\newcommand{\Gbar}{\overline{G}}

\newcommand{\smat}[4]{
  \left(
    \begin{smallmatrix}
      #1 & #2\\
      #3 & #4
    \end{smallmatrix}
  \right)
}

\title[Explicit Galois obstruction and descent]{Explicit Galois obstruction and
descent for hyperelliptic curves with tamely cyclic reduced automorphism group}

\author{Reynald Lercier}
\address{%
  \textsc{DGA MI}, %
  La Roche Marguerite, %
  35174 Bruz, %
  France. %
}
\address{%
  IRMAR, %
  Universit\'e de Rennes 1, %
  Campus de Beaulieu, %
  35042 Rennes, %
  France. %
} 
\email{reynald.lercier@m4x.org}

\author{Christophe Ritzenthaler}
%
\address{%
  IRMAR, %
  Universit\'e de Rennes 1, %
  Campus de Beaulieu, %
  35042 Rennes, %
  France. %
}
\email{ritzenthalerchristophe@gmail.com}

\author{Jeroen Sijsling} 
\address{Mathematics Institute, Zeeman Building, University of Warwick, %
Coventry CV4 7AL, United Kingdom.
}
\email{sijsling@gmail.com}

\thanks{The authors acknowledge support by grant ANR-09-BLAN-0020-01, and by
  the research programme \emph{Investissements d'avenir} (ANR-11-LABX-0020-01)
  of the Centre Henri Lebesgue. The third author was additionally supported by a
  Marie Curie Fellowship IEF-GA-2011-299887.}

\date{\today} 
\subjclass[2010]{14Q05 ; 13A50 ; 14H10 ; 14H25 ; 14H37}
\keywords{Hyperelliptic curve ; Galois descent ; field of definition ; field of
moduli ; invariants ; genus 3}

\begin{document}

\begin{abstract}
  This paper is devoted to the study of the Galois descent obstruction for
  hyperelliptic curves of arbitrary genus whose reduced automorphism groups are
  cyclic of order coprime to the characteristic of their ground field. We give
  an explicit and effectively computable description of this obstruction. Along
  the way, we obtain an arithmetic criterion for the existence of a so-called
  hyperelliptic descent.

  We define homogeneous dihedral invariants for general hyperelliptic curves,
  and show how the obstruction can be expressed in terms of these invariants. If
  this obstruction vanishes, then the homogeneous dihedral invariants can also
  be used to explicitly construct a model over the field of moduli of the curve;
  if not, then one still obtains a hyperelliptic model over a degree $2$
  extension of the field of moduli.
\end{abstract}

\maketitle

\section*{Introduction}

The classical problem of Galois descent, as first considered by Weil
in~\cite{wei56}, is the following:
Let $\Xm$ be a variety over the algebraically closure $K$ of a perfect base
field $k$.  Suppose that $\Xm$ is isomorphic with all its Galois conjugates
$\Xm^{\sigma}$ under the action of $\Gal(K | k)$, or in other words that $k$ is
the \emph{(Galois) field of moduli} of $\Xm$ for the extension $K | k$. Does
there then exist a model of $\Xm$ over $k$?

If such a model exists, then it is called a \emph{descent} of $\Xm$.
Generically, or more precisely, when the geometric automorphism group of $\Xm$
is trivial, there is no obstruction to descent~\cite{wei56}, but this
partial answer is unsatisfactory, as there are many interesting classes of
varieties with nontrivial automorphism group. This paper considers one such
class, namely that of hyperelliptic curves. The explicit form of their defining
equations makes hyperelliptic curves the simplest class of curves after conics
and elliptic curves (for which the answer to the descent question is
well-known to be affirmative). Due to the presence of the hyperelliptic
involution, hyperelliptic curves never have a trivial automorphism group. This
makes them a fundamental example in the study of the descent problem.

The problem in fact allows a further refinement for hyperelliptic curves;
instead of merely asking for some model over $k$, one can ask for a model that is
again given by a hyperelliptic equation $y^2 = p(x)$. Let us call such a model a
\emph{hyperelliptic descent}. Considering the homogenization of $p$ links the
study of hyperelliptic descent with the study of homogeneous binary forms.  This
is a great benefit, since not only was the invariant theory of these forms
extensively studied in the nineteenth century, but one can often also apply the
\emph{method of covariants}, as in~\cite{lrs}.

The answer to the descent question for hyperelliptic curves depends on the
\emph{reduced automorphism group} $\Gbar$ of $\Xm$, which is the quotient of the
automorphism group $G = \Aut (\Xm)$ of $\Xm$ by the hyperelliptic involution. We
assume that the characteristic of $k$ does not equal $2$ throughout this paper
in order to describe hyperelliptic curves as separable covers of a conic over
$k$. Under this running assumption, let us say that $\Gbar$ is \emph{tamely
cyclic} if it is cyclic of order coprime to the characteristic of $K$. Then
Huggins' seminal work~\cite{huggins} shows that if $\Gbar$ is not tamely cyclic,
then the curve $\Xm$ allows a hyperelliptic descent. For tamely cyclic $\Gbar$,
explicit counterexamples for descent were first constructed by
Earle~\cite{earle} and Shimura~\cite{shimura-moduli}. More recently, the full
classification of the hyperelliptic curves that do not allow a hyperelliptic
descent for the extension $\CC | \RR$  was initiated by
Bujalance-Turbek~\cite{bujtur} and completed by Huggins~\cite{huggins-thesis}.

In Section~\ref{sec:descent}, we give a complete answer to the descent problem
in the case where $\Xm$ is a hyperelliptic curve with tamely cyclic reduced
automorphism group, for any extension $K | k$. The problem is naturally
stratified by our notion of the \emph{type} of $\Xm$, which contains information
on the automorphism group and the Weierstrass points of $\Xm$. We refer to
Theorems~\ref{thm:MainTh3} and~\ref{thm:MainTh4} for precise statements, but
essentially, once the type is given, then either all the curves of that type
descend or the obstruction is classified by the solvability of a certain norm
equation.

If the descent obstruction vanishes, then Section~\ref{sec:descent} also shows
how a descent can be effectively constructed if $\Gbar$ is nontrivial. In
Section~\ref{sec:trivial}, we consider the slightly more involved case when
$\Gbar$ is trivial. In this case, efficient algorithms are constructed by using
the covariant method from~\cite{lrs}.  Finally, in Section~\ref{sec:counter}, we
show how to construct essentially all counterexamples to descent, which recovers
the aforementioned results on the extension $\CC | \RR$ as a special case. More
precisely, given any quadratic extension of fields $L | k$,
Theorem~\ref{thm:MainTh5} gives a completely explicit description of the
$K$-isomorphism classes of the curves which are defined over $L$ and
$K$-isomorphic with their conjugate, but that do not descend to $k$.

The norm equation mentioned above is in fact determined purely by the
\emph{homogeneous dihedral invariants} of the curve $\Xm$. These invariants,
which will be discussed in Section~\ref{sec:dihedral}, are closely related with
and indeed named after the dihedral invariants defined by Gutierrez and
Shaska~\cite{gutsha}. Like these invariants, they can be calculated quickly once
the curve $\Xm$ is given in standard form, a transformation to which can be
determined effectively by using the methods in~\cite[Sec.2]{lrs}. However, there
are a few important differences between our dihedral invariants and the original
ones in~\cite{gutsha}.

First of all, the models from which we derive our homogeneous dihedral
invariants are normalized in a weaker way than in~\cite{gutsha}. Second, the
homogeneous dihedral invariants give an effective approach to the reconstruction
and parametrization of forms with given invariants, also in the non-generic
cases where many of the coefficients in these normal forms are zero.  Third, and
contrary to what is suggested in Lemma~3.2 and Theorem~4.5 in~\cite{gutsha},
such non-generic reconstruction is in fact more involved than that in the
generic case.  Finally, the claimed reconstruction over the field of moduli $k$
in~\cite[Thm.4.5]{gutsha} actually takes place over a quadratic extension of
$k$, as was already pointed out in~\cite[Rem.4.17]{lr2012}. In
particular,~\cite[Cor.4.6]{gutsha} is incorrect, as can also be seen from the
results in~\cite[Sec.6]{huggins} and our complete classification of the
counterexamples in Theorem~\ref{thm:MainTh3}.

To describe our invariants, consider the subgroup $D$ of $\GL_2$ consisting of
diagonal and anti-diagonal matrices. Then the homogeneous dihedral invariants
are the invariants of binary forms under the action of the group $D \cap \SL_2
(K)$ that are moreover homogeneous as a function in the defining coefficients of
these forms. Alternatively, they are those $D \cap \SL_2 (K)$-invariant
polynomials in the defining coefficients for which the action by the diagonal
subgroup of $\GL_2$ is described by a character. All invariants for $D \cap
\SL_2 (K)$ (hence in particular the invariants for $D$ itself) can be expressed
as a rational function in the homogeneous dihedral invariants.


Before defining the homogeneous dihedral invariants and proving the main
theorem, we need a result relating the existence of a general descent with that
of a hyperelliptic descent. This theme is explored in Section~\ref{sec:hypdesc}.
Building on results by Mestre~\cite{mestre} and Huggins~\cite{huggins}, we shall
show in Theorem~\ref{thm:MainTh1} that these two variants of the descent
problems are in fact equivalent, except possibly when the genus $g$ of $\Xm$ is
odd and its reduced automorphism group is tamely cyclic of odd order. In this
latter case, Theorem~\ref{thm:MainTh4} shows that a descent always exists.
Furthermore, we completely classify the counterexamples to this equivalence in
this remaining case in Theorem~\ref{thm:MainTh5}.

Even more surprising is that the existence of a hyperelliptic descent of $\Xm$
turns out to allow an arithmetic characterization. To formulate this result,
consider the quotient $B = \Xm / G$ of $\Xm$ by its full automorphism group. The
curve $B$ has a canonical descent $B_0$ to $k$, and it is well-known (see for
example \cite[Cor.2.3]{debes-emsalem}) that the presence of a
point of $B_0$ over the field of moduli is a \emph{sufficient} condition for
\emph{some} descent of $\Xm$ to exist. In Theorem~\ref{thm:MainTh2}, we show
that in fact the existence of such a rational point is \emph{equivalent} with
the existence of a \emph{hyperelliptic} descent of $\Xm$. In particular, we see
that $\Xm$ always admits a hyperelliptic equation over a degree $2$ extension of
$k$.

These results simplify matters from a theoretical point of view. The more
general obstruction criterion in~\cite[Sec.4]{debes-emsalem} describes the
descent obstruction in terms of the triviality of one of \emph{infinitely} many
element of $H^2$-cohomology groups. For hyperelliptic curves, the descent
obstruction turns out to be equivalent to the triviality of a single twist
(namely $B_0$) of $\PP^1_k$. Alternatively, this amounts to the triviality of a
\emph{single} element of an $H^1$-cohomology group. It is this pleasant surprise
that makes the theory of Galois descent for hyperelliptic curves both
conceptually simple and effectively computable.

After the proof of the main Theorems~\ref{thm:MainTh3} and~\ref{thm:MainTh4}, we
turn to algorithmic considerations and the implementation of our results in
Section~\ref{sec:imp}. Our \texttt{Magma}~\cite{Magma} functionality is
available
online\footnote{\label{foothyp}\url{http://iml.univ-mrs.fr/~ritzenth/programme/hyp-desc.tgz}}.
We also discuss how this implementation can be combined with the results
of~\cite{lr2012}. This concludes the exploration of the arithmetic aspects of
the moduli space of hyperelliptic genus $3$ curves started in that article; it
shows how to reconstruct any given genus $3$ curve from its invariants over an
extension of the field of moduli of minimal degree (which we now know to be at
most $2$).  This additional functionality has been added to the package
\texttt{g3twists}\footnote{\label{footg3}\url{http://iml.univ-mrs.fr/~ritzenth/programme/g3twists_v1.1.tgz}},
and is included in the current versions of \texttt{Magma}.
Section~\ref{sec:conc} concludes the paper and briefly discusses the remaining
open questions on the descent of hyperelliptic curves.

Table~\ref{tab:issues} gathers our state of knowledge (we \textbf{emphasize}
what is proved in the present paper).
\begin{table}[htbp]
  \centering
  \renewcommand{\arraystretch}{1.1}
  {\small
    \begin{tabular}{|c|c|c|c|c|c|}
      \hline
      $\Gbar$ &%
      Condition &%
      \begin{tabular}{c} Descent $\Leftrightarrow$\\
        Hyperelliptic \\ descent\end{tabular} &%
      \begin{tabular}{c} Obstruction\\
        to descent\end{tabular} &%
      \begin{tabular}{c} Effective\\
        Method\end{tabular} \\ \hline\hline
      \begin{tabular}{c} Not tamely \\ cyclic \end{tabular} &%
      - &%
      \begin{tabular}{c} Yes\\
        \cite{huggins-thesis} \end{tabular} &%
      \begin{tabular}{c} No\\
        \cite{huggins-thesis} \end{tabular} &%
    ?%
      \\\hline%
      \multirow{2}{*}{\begin{tabular}{c} Tamely cyclic \\ and $\# \Gbar > 1$
      \end{tabular}} &%
      \begin{tabular}{c} $g$ odd and \\
      \#$\Gbar$ odd\end{tabular}&%
      \begin{tabular}{c} \textbf{No} \\
      Ex.\ \ref{ex:condesc} \end{tabular}&%
      \begin{tabular}{c} \textbf{No} \\
      Thm.\ \ref{thm:MainTh4} \end{tabular}&%
      \begin{tabular}{c} \textbf{Yes} \\
      Alg.\ \ref{alg:nonhypdesc} \end{tabular}%
      \\
      \cline{2-5}%
      &%
      \begin{tabular}{c} $g$ even or \\
      \#$\Gbar$ even \end{tabular}&%
      \begin{tabular}{c} \textbf{Yes} \\
      Thm.\ \ref{thm:MainTh1} \end{tabular}&%
      \begin{tabular}{c} \textbf{Yes} \\
      Thm.\ \ref{thm:MainTh3} \end{tabular}&%
      \begin{tabular}{c} \textbf{Yes} \\
      Alg.\ \ref{alg:hypdesc} \end{tabular}%

      \\\hline%

      \multirow{2}{*}{$\# \Gbar > 1$} &%
      $g$ odd &%
      \begin{tabular}{c} No\\
        \cite{lr2012} \end{tabular} &
      \begin{tabular}{c} No\\
        \cite{lr2012} \end{tabular} &
      \begin{tabular}{c}
        \textbf{Generic} if $g \leq 2^7$ \\
        Rem.\ \ref{rem:nontrivcov} \end{tabular} 
      \\\cline{2-5}%
       &%
      $g$ even &%
      \begin{tabular}{c} Yes\\
        \cite{mestre} \end{tabular} &
      \begin{tabular}{c} Yes\\
        \cite{mestre} \end{tabular} &
      \begin{tabular}{c}
        \textbf{Generic} if $g \leq 2^7$ \\
        Rem.\ \ref{rem:nontrivcov} \end{tabular} 
      \\%

      \hline \hline
    \end{tabular} 
  }\medskip
  \caption{Issues addressed in the present paper.\label{tab:issues}
  } 
\end{table}

\subsection*{Notation}

We let $k$ be a perfect field of odd characteristic, and we let $K$ denote its
algebraic closure. We denote $\Gamma = \Gal (K | k)$. The curves over $K$ and
its subfields that are considered in this paper will be smooth, proper and
geometrically irreducible throughout. We define a \emph{hyperelliptic curve}
over $k$ as in~\cite[Sec.1.2]{lr2012}; that is, a curve $\Cm$ over $k$ is
hyperelliptic if and only if it admits a degree $2$ morphism to $\PP^1_K$ over
$K$. In this case, $\Cm$ admits a unique corresponding~\emph{hyperelliptic
involution} $\iota$ for which the quotient $\Cm / \iota$ is a conic over $k$.

In what follows, $\Xm$ denotes a hyperelliptic curve over $K$ of genus $g$ whose
field of moduli with respect to the extension $K | k$ equals $k$.
We denote the group $\Aut (\Xm)$ of automorphisms of $\Xm$ defined over $K$ by
$G$. The \emph{reduced automorphism} group $\Gbar = G / \iota$ is the quotient
of $G$ by the central element $\iota$. In the second half of this paper, we will
additionally suppose that $\Gbar$ is \emph{tamely cyclic}, \ie, cyclic of order
coprime to the characteristic of $K$. Finally, given a curve $\Xm$ and a divisor
$\dD$, we denote the group of automorphisms $\alpha$ of $\Xm$ over $K$ such that
the pushforward $\alpha_* (\dD)$ equals $\dD$ by $\Aut (\Xm,\dD)$.

We will occasionally construct a model of $\Xm$ over an intermediate field $k
\subseteq L \subseteq K$. When considering such curves over intermediate fields,
we restrict our consideration of morphisms to those defined over $L$, unless
explicitly specified otherwise. We denote the corresponding automorphism groups
by $\Aut_L (\Xm)$, \textit{et cetera}.

If $\varphi : X \to Y$ is a morphism between algebraic curves, then the
\emph{ramification divisor} of $\varphi$ is the divisor of the points on $X$
that ramify under $\varphi$. The \emph{branch divisor} of $\varphi$ is the image
of this divisor under $\varphi_*$. Note that we use these divisors without
multiplicities throughout.

We adopt the usual notation of denoting the Galois action by a superscript,
\emph{e.g.} $f^{\sigma}$ for the conjugation on a binary form. We consider this
as a left action, which leads to the somewhat counterintuitive equality
$f^{\sigma \tau} = (f^{\tau})^{\sigma}$.

We use the notation $\CG_n$ (resp.\ $\DG_{2 n}$) for the cyclic group with $n$
elements (resp.\ the dihedral group with $2 n$ elements. Given two homogeneous
binary forms $f_1$ and $f_2$ over a subfield $L$ of $K$, we say that $f_1 \sim
f_2$ if there exists a $\lambda$ in $L^*$ such that $f_1 = \lambda \cdot f_2$.
Given a matrix $A = \smat{a}{b}{c}{d}$ over $K$, we let $A . f$ be the
polynomial given by $(A . f) (x,z) = f ( A^{-1} (x,z))$. Finally, given a binary
form $f$ over $K$, we denote by $\Aut (f)$ the group of matrices $A$ up to
scalar in $\PGL_2 (K)$ such that $A . f \sim f$.

By $\zeta_n$, we denote a fixed choice of $n$-th root of unity in $K$; these
roots are chosen in such a way to respect the standard compatibility conditions
when raising to powers. The cyclic groups $\CG_n = \ZZ / n \ZZ$ are
always considered as being embedded in $\PGL_2 (K)$, by sending the generator
$1$ of $\CG_n$ to the automorphism acting by $(x:z) \mapsto (\zeta_n x
: z)$.

Throughout, we usually denote objects that are defined over the ground field $k$
by a zero-subscript, so that for example $X_0$ is typically a hyperelliptic
curve over $k$.

\section{Descent and hyperelliptic descent}\label{sec:hypdesc}

Consider a perfect field $k$ of odd characteristic, and let $K$ be the algebraic
closure of $k$. Let $\Gamma = \Gal (K | k)$, and let $\Cm$ be a curve over $K$.

\begin{definition}\label{def:fom}
  The \emph{(Galois) field of moduli} of $\Cm$ with respect to the extension $K |
  k$ is the fixed field of the group $\{ \sigma \in \Gamma : \Cm \; \text{is
  isomorphic to} \; \Cm^{\sigma} \; \mathrm{over} \; K \}$.
\end{definition}

\begin{remark}\label{rem:insep}
  For more general extensions $K | k$, one usually defines the field of moduli
  of $\Cm$ with respect to $K | k$ as the intersection of all fields of
  definition of $\Cm$ that are contained in $K$. The Galois field of moduli in
  the previous definition is then a purely inseparable extension of this more
  general field of moduli by~\cite{Sekiguchi}. We refer to
  Section~\ref{sec:conc} for some open questions concerning these matters.
\end{remark}

\begin{definition}\label{def:model}
  Let $L \subset K$ be a subfield of $K$ containing $k$. A \emph{model} of $\Cm$
  over $L$ is a curve $\Cm_0$ over $L$ such that $\Cm$ is isomorphic to $\Cm_0$
  over $K$. The field $L$ is then called a \emph{field of definition} for $\Cm$.
  
  A model of $\Cm$ over its field of moduli is called a \emph{descent} of $\Cm$.
  If such a model exists, then $\Cm$ is said to \emph{descend} (to its field of
  moduli). If not, then we say that there is \emph{descent obstruction} for $\Cm$.
\end{definition}

%

For hyperelliptic curves, one can ask for a more specific form of descent.

\begin{definition}\label{def:hypdesc}
  Let $\Xm$ be a hyperelliptic curve over $K$ of genus $g$ whose field of
  moduli for the extension $K | k$ equals $k$. A \emph{hyperelliptic descent} of
  $\Xm$ is a model $\Xm_0$ of $\Xm$ over $k$ that is defined by a homogeneous
  polynomial $f_0 (x,z)$ of degree $2 g + 2$ over $k$ without repeated roots.
  More precisely, this is to say that $\Xm_0$ is the desingularization of the
  curve $y^2 = f_0(x,z)$ in the $(1,1,g+1)$-weighted projective $(x,z,y)$-space
  over $k$. 
\end{definition}

\begin{remark}
  There is a slight ambiguity to be noted. According to
  Definition~\ref{def:hypdesc}, any descent $\Xm_0$ of a hyperelliptic curve
  $\Xm$ is in fact hyperelliptic as a curve over $k$. However, such a descent is
  not always a hyperelliptic descent; this is the case if and only if the
  quotient $\Qm_0$ of $\Xm_0$ by its hyperelliptic involution $\iota_0$ is
  isomorphic to $\PP^1$ over $k$.
\end{remark}

\subsection{Equivalence between descent and hyperelliptic descent}


A fundamental result of Mestre~\cite{mestre} tells us that if $g$ is even, then
the curve $\Xm$ descends if and only if it descends hyperelliptically.  However,
when $g \geq 3$ is odd, this need not be the case. A counterexample is given in
the discussion after~\cite[Prop.4.13]{lr2012}. Due to the simpler nature of
hyperelliptic descent, we now first study in which other cases the equivalence
indicated by Mestre continues to hold.

It turns out that the answer to this question depends on the reduced
automorphism group $\Gbar = \Aut (\Xm) / \iota$. To get an idea of the problem,
we first consider the case of trivial $\Gbar$.  As in~\cite[Sec.4.3]{lr2012},
one shows that degree~$2$ covers of pointless conics over $k$ whose branch locus
is Galois stable give rise to curves over $K$ that have trivial reduced
automorphism group and nontrivial descent obstruction. Therefore in this case
there exist curves that descend but do not descend hyperelliptically. Number
fields are an important and naturally occurring class of fields over which such
covers of conics exist.

This previous paragraph can be seen as one of the few exceptions to the main
statement of the following Theorem, which we will prove in this Section, and
which we will later use to give an arithmetic criterion for the existence of a hyperelliptic descent in Theorem~\ref{thm:MainTh2}.

\begin{theorem}\label{thm:MainTh1}
  Let $\Xm$ be a hyperelliptic curve over $K$ of genus $g$ whose field of moduli
  for the extension $K | k$ equals $k$. Let $\Gbar$ be the reduced automorphism
  group of $\Xm$. Then the existence of a descent of $\Xm$ is equivalent to the
  existence of a hyperelliptic descent, except possibly when $g$ is odd and
  $\Gbar$ is tamely cyclic of odd cardinality.
\end{theorem}

\begin{remark}
  In the remaining case where $g$ and $\# \Gbar$ are both odd, we refer the
  reader to Theorem~\ref{thm:MainTh4} for a proof that $\Xm$ always descends.
  Moreover, in Example~\ref{ex:condesc} we will explicitly construct a
  hyperelliptic curve with nontrivial reduced automorphism group and field of
  moduli $\QQ$ that descends, but does not descend hyperelliptically.
\end{remark}
  
\begin{remark}
  An arithmetic criterion for the existence of a hyperelliptic descent is
  given in Theorem~\ref{thm:MainTh2}.
\end{remark}

In order to prove Theorem~\ref{thm:MainTh1}, we will first need a few technical
lemmata to deal with the case where the reduced automorphism group contains an
element of order $2$. The first such lemma is Lemma~3.1
from~\cite{xarles-towers}, there attributed to Poonen and to Witt before him.
Here we give a stronger version of this result.

\begin{lemma}\label{lem:xarles}
  Let $f : \Qm_0 \to \Bm_0$ be a nonconstant morphism between genus $0$
  curves over $k$ of degree $n$.
  \begin{enumerate}
    \item If $n$ is even, then $\Bm_0$ is isomorphic with $\PP^1$ over $k$.
    \item If $n$ is odd, then $\Bm_0$ is isomorphic with $\Qm_0$ over $k$.
  \end{enumerate}
\end{lemma}
\begin{proof}
  As in the proof of ~\cite[Lem.3.1]{xarles-towers}, one shows that the class of
  $\Qm_0$ in the Brauer group of $k$ is $n$ times that of $\Bm_0$. The result
  then follows from the fact that these classes are $2$-torsion elements.
\end{proof}

We will now apply Lemma~\ref{lem:xarles} in the situation of interest to us. In
what follows, our frequent hypothesis that $\Qm_0$ not be isomorphic with
$\PP^1$ is not always necessary, but we will only need the lemmata in this case.
Moreover, our current exposition allows for a more unified treatment of finite
and infinite base fields $k$.

\begin{lemma}\label{lem:RCons}
  Let $\Qm_0$ be a genus $0$ curve over $k$ that is not isomorphic with $\PP^1$
  over $k$, and let $\alpha_0 \in \Aut_k (\Qm_0)$ be an automorphism of order
  $2$ of $\Qm_0$ that is defined over $k$.  Then there exists a $k$-rational
  divisor $\rR_0$ of degree $6$ on $\Qm_0$ such that $\Aut_{K} (\Qm_0,\rR_0)$ is
  generated by $\alpha_0$. 
\end{lemma}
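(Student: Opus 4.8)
The plan is to build $\rR$ by pulling back a divisor from the quotient $Q/\alpha$, where rationality over $k$ comes for free, and then to pin down the automorphism group by a genericity argument. By Lemma~\ref{lemma:qquota} the quotient map $\pi \colon Q \to Q/\alpha \cong \PP^1_k$ is a degree-$2$ morphism defined over $k$, with $\alpha$ as its deck transformation; its branch divisor $\bb \subset \PP^1_k$ has degree $2$. I would choose a $k$-rational effective divisor $\dD$ of degree $3$ on $\PP^1_k$ that is reduced and disjoint from $\bb$ (concretely, $\dD$ is cut out by a separable binary cubic over $k$ whose roots avoid those of $\bb$), and set $\rR = \pi^{*}\dD$. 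Since $\pi$ is defined over $k$ and $\dD$ is $k$-rational, $\rR$ is a $k$-rational divisor; as $\dD$ avoids $\bb$ it is reduced of degree $6$; and because $\pi\alpha = \pi$ it is $\alpha$-invariant, so $\langle\alpha\rangle \subseteq \Aut_K(Q,\rR)$. It remains to arrange the reverse inclusion.

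For the reverse inclusion I would first dispose of the automorphisms $\beta \in \Aut_K(Q,\rR)$ that normalize $\langle\alpha\rangle$, and this works for any admissible choice of $\dD$. Such a $\beta$ satisfies $\beta\alpha\beta^{-1} = \alpha$ (the conjugate being an involution, hence not the identity), so it descends to an automorphism $\bar\beta$ of $Q/\alpha \cong \PP^1_k$ with $\bar\beta(\dD) = \dD$. As $\dD$ consists of three distinct points and a Möbius transformation fixing three points is the identity, $\bar\beta = \Id$; since the kernel of the descent homomorphism from the centralizer of $\alpha$ to $\Aut(Q/\alpha)$ is exactly $\langle\alpha\rangle$, this forces $\beta \in \langle\alpha\rangle$. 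Thus the only possible extra elements of the stabilizer are non-normalizing ones, which I would eliminate generically.

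To this end I would parametrize the choices of $\dD$ by $\PP(\Sym^3) \cong \PP^3_k$, with the reduced divisors disjoint from $\bb$ forming a $\Gamma$-stable Zariski-open subset $U$. Working geometrically over $K$, where $Q_K \cong \PP^1$ and $\alpha$ becomes $t \mapsto -t$, the condition that $\Aut_K(Q, \pi^{*}\dD)$ strictly contains $\langle\alpha\rangle$ cuts out a closed subset $Z \subseteq U$: for each of the finitely many permutation patterns of the six points not induced by $\langle\alpha\rangle$, the existence of a Möbius transformation realizing it is an algebraic condition. This $Z$ is $\Gamma$-stable because the whole construction is defined over $k$. To see that $Z$ is proper I would exhibit a single configuration of three $\alpha$-orbits whose $\PGL_2(K)$-stabilizer is exactly $\langle\alpha\rangle$, which a general triple of orbits provides. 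Finally, since $U \setminus Z$ is then a nonempty $\Gamma$-stable open subset of $\PP^3_k$ and $k$ is infinite, it contains a $k$-rational point $\dD$; the associated $\rR = \pi^{*}\dD$ satisfies $\Aut_K(Q,\rR) = \langle\alpha\rangle$ and has all the required properties.

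The main obstacle is the properness of $Z$ in the last paragraph: one must certify that some $\alpha$-invariant sextuple of points has no symmetry beyond $\alpha$, and then transfer this genericity to an honest $k$-rational divisor, which is exactly where the standing assumption that $k$ is infinite enters. The normalizing case, by contrast, needs no genericity and holds for every admissible $\dD$.
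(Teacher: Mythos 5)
Your construction is the same as the paper's --- pull back a rational degree-$3$ divisor from $Q/\alpha \cong \PP^1_k$ (Lemma~\ref{lemma:qquota}) and use the infiniteness of $k$ to avoid a bad closed locus --- but two of your steps do not hold up. The normalizer step is false: $\bar\beta(\dD) = \dD$ is an equality of divisors, so it only says that $\bar\beta$ \emph{permutes} the three points of $\dD$, not that it fixes them pointwise, and the stabilizer in $\PGL_2(K)$ of a set of three distinct points is a copy of the symmetric group $S_3$, not trivial. Concretely, normalize so that the branch divisor of $\pi$ is $\{0,\infty\}$ and take $\dD = \{1,\omega,\omega^2\}$ with $\omega$ a primitive cube root of unity: then $\rR = \pi^{*}\dD$ is the set of sixth roots of unity, and the rotation $s \mapsto \zeta_6 s$ (a lift through $t = s^2$ of $t \mapsto \omega t$) centralizes $\alpha : s \mapsto -s$ and stabilizes $\rR$, so $\Aut_K(Q,\rR)$ strictly contains $\langle\alpha\rangle$. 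Hence your closing claim that the normalizing case ``needs no genericity and holds for every admissible $\dD$'' is wrong: centralizing extra automorphisms do occur for special $\dD$ and must be excluded generically like all the others. (This error is not architecturally fatal, because the locus $Z$ as you define it --- where the stabilizer \emph{strictly contains} $\langle\alpha\rangle$ --- already accounts for normalizing elements; any element outside $\langle\alpha\rangle$ induces a permutation of the six points not induced by $\langle\alpha\rangle$, since two M\"obius transformations agreeing on six points coincide.)

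The second problem is the crux, and you flag it yourself without resolving it: the properness of $Z$ is never established. Saying that ``a general triple of orbits'' provides a configuration with stabilizer exactly $\langle\alpha\rangle$ is circular --- that \emph{is} the statement $Z \neq U$ --- so one must either exhibit and verify an explicit sextuple with no extra symmetry, or invoke a known result. The paper does the latter: it identifies $\Aut_K(Q,\pi^{*}\dD)$ with the reduced automorphism group of the genus-$2$ curve $y^2 = (x^2-\lambda)(x^2-\mu)(x^2-\nu)$ and cites Cardona--Quer~\cite{carquer} for the fact that outside a divisor of $\AAb^3_K$ this group equals $\CG_2$; a $k$-point of $\AAb^3_k$ off (the pullback of) that divisor then exists because $k$ is infinite. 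With that input, and with the normalizing case folded back into the generic exclusion rather than treated separately, your argument closes up and coincides with the paper's proof.
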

\begin{proof}
%
  Consider the morphism from the affine space $\AAb^3$ to the moduli space
  $\mM_2$ of genus $2$ curves that sends a triple $(\lambda,\mu,\nu)$ to the
  curve $\Xm_{\lambda,\mu,\nu}$ given by the hyperelliptic equation
  $y^2=(x^2-\lambda)(x^2-\mu)(x^2-\nu)$. The results in~\cite{carquer} show that
  the locus $\lL$ of $\AAb^3$ for which the reduced automorphism group of
  $\Xm_{\lambda,\mu,\nu}$ is strictly larger than $\CG_2$ is of codimension~$1$
  in $\AAb^3$.

  Now let $\pi_0 : \Qm_0 \to \Qm_0 / \alpha_0$ be the quotient morphism. We
  choose coordinates over $K$, that is to say, $K$-isomorphisms $\varphi : \Qm_0
  \to \PP^1$ and $\psi : \Qm_0 / \alpha_0 \to \PP^1$. Using the
  three-transitivity of $\Aut_K (\PP^1)$, we see that we can do this in such a
  way that the coordinatization $\psi \pi_0 \varphi^{-1}$ of the projection
  $\pi$ is given by the degree~$2$ map $(x:z) \mapsto (x^2:z^2)$. Let $q,r,s$ be
  three points on $\Qm_0 / \alpha_0$ that are not branch points of $\pi_0$. Then
  under our coordinatization, and considering $\AAb^1$ as a subvariety of
  $\PP^1$ via the coordinate $t = x / z$, the divisor $\rR = \pi_0^{-1} (r + s +
  t)$ on $\Qm_0$ is isomorphic over $K$ to the divisor $\psi^* (r) + \psi^* (s)
  + \psi^* (t)$. Therefore the hyperelliptic curve defined by taking a degree
  $2$ cover of $\Qm_0$ ramified over $\rR$ is isomorphic to the curve
  $\Xm_{\psi(r), \psi (s), \psi (t)}$.


  The transformation $\psi^{-1} (\lL)$ of the exceptional locus $\lL \subset
  \AAb^3$ is a codimension $1$ locus in $(\Qm_0 / \alpha_0)^3$. Note that $k$ is
  infinite by the existence of $\Qm_0$. This implies that the set of
  $k$-rational points is dense in $\Qm_0 / \alpha_0$, which is isomorphic with
  $\PP^1_k$ by Lemma~\ref{lem:xarles}(i).  Therefore we can find a rational
  point $(r_0 , s_0 , t_0)$ of $(\Qm_0 / \alpha_0)^3$ outside the exceptional
  locus. By construction, the divisor $\rR_0 = \pi^{-1} (r_0 + s_0 + t_0)$ now
  satisfies our requirements.
\end{proof}


\begin{lemma}\label{lem:QuadExt}
  Let $\Qm_0$ be a genus $0$ curve over $k$ that is not isomorphic with $\PP^1$
  over $k$, and let $\alpha_0$ be an automorphism of order $2$ of $\Qm_0$ that
  is defined over $k$. 
  Then there exists a quadratic extension $L$ of $k$ and an isomorphism $\varphi
  : \Qm_0 \to \PP^1$ over $L$ such that $\varphi^{\sigma} = \varphi \alpha$ for
  the generator $\sigma$ of $\Gal (L |k)$.
\end{lemma}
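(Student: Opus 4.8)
The plan is to exploit the fact that a genus $0$ curve $Q$ over $k$ is a conic, hence becomes isomorphic to $\PP^1$ after a quadratic base change, and to arrange that quadratic extension so that the Galois conjugate of the trivializing isomorphism differs from $\varphi$ precisely by the automorphism $\alpha$. First I would use Lemma~\ref{lemma:qquota} to realize the situation concretely: the quotient $Q / \alpha$ is isomorphic to $\PP^1_k$, and after the embedding used in that lemma, $Q$ is a conic $x^2 = q(y,z)$ in $\PP^2$ with $\alpha$ given by $(x:y:z) \mapsto (-x:y:z)$, while the quotient map $\pi : Q \to Q/\alpha \cong \PP^1_k$ is $(x:y:z) \mapsto (y:z)$.

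Next I would produce the quadratic extension. Since $Q \ncong \PP^1_k$, the conic $Q$ has no $k$-rational point, equivalently $q(y,z)$ is not the norm of a $k$-rational linear form; nonetheless over $\kbar$ the binary quadratic form $q$ factors and admits a square root of its discriminant. Concretely, writing $q$ up to a square change of coordinates on $(y,z)$, the relevant obstruction is a single element $d \in k^{\times} / (k^{\times})^2$, and I would take $L = k(\sqrt{d})$ to be the quadratic extension over which $Q$ acquires a rational point, hence becomes isomorphic to $\PP^1_L$. This is the standard fact that a pointless conic over $k$ is split by a quadratic extension, so the field $L$ is forced and genuinely quadratic because $Q \ncong \PP^1_k$.

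The heart of the argument is to choose the isomorphism $\varphi : Q \to \PP^1_L$ so that $\varphi^{\sigma} = \varphi \alpha$. The key observation is that $\alpha$ generates $\Gal$-equivariantly the deck transformation of the conic relative to its quotient: the nontrivial automorphism of $\PP^1_L$ descending the identity on $Q/\alpha$ and exchanging the two geometric points of $Q$ above each point of $Q/\alpha$ must, by uniqueness of the quotient datum, coincide with the action of $\sigma$ transported through $\varphi$. More precisely, I would argue that both $\varphi^{\sigma}$ and $\varphi\alpha$ are isomorphisms $Q \to \PP^1_L$ that induce the identity on the common descended quotient $Q/\alpha \cong \PP^1_k$ and that are distinct from $\varphi$; since the automorphisms of $\PP^1_L$ commuting with the degree $2$ quotient map to $Q/\alpha$ form a group of order $2$, the two nontrivial candidates $\varphi^{\sigma}\varphi^{-1}$ and $\varphi\alpha\varphi^{-1}$ must agree, giving $\varphi^{\sigma} = \varphi\alpha$. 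To make $\varphi^{\sigma}$ nontrivial relative to $\varphi$ one uses precisely that $Q$ has no $k$-point, so no trivializing isomorphism is defined over $k$.

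The main obstacle, and the step requiring the most care, is the last one: a priori $\varphi^{\sigma} \varphi^{-1}$ is \emph{some} automorphism of $\PP^1_L$, and one must pin it down as exactly $\alpha$ rather than merely an automorphism of order $2$. The cocycle relation forces $(\varphi^{\sigma}\varphi^{-1})$ to be an involution (since $(\varphi^{\sigma})^{\sigma} = \varphi$ as $\sigma^2 = 1$), and descent theory identifies its conjugacy type with the class of $Q$ in $H^1(\Gal(L|k), \Aut(\PP^1))$; the remaining work is to adjust $\varphi$ by an $L$-automorphism of $\PP^1$ so that this involution becomes exactly the one corresponding to $\alpha$, which is possible because $\alpha$ and $\varphi^\sigma\varphi^{-1}$ both descend the curve $Q$ together with the same order $2$ quotient structure. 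I would verify this by an explicit normal form: after the coordinates of Lemma~\ref{lemma:qquota}, a quadratic twist of $\PP^1$ trivialized over $L=k(\sqrt d)$ has its canonical descent involution given by $(u:v)\mapsto(-u:v)$ in suitable coordinates, and one checks this matches $\alpha$ under the induced map on quotients, completing the proof.
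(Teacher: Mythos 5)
There is a genuine gap, and it sits exactly where you choose $L$: you determine the quadratic extension from the conic and the form $q$ alone, asserting that ``the field $L$ is forced,'' whereas the extension required by the lemma must be adapted to the pair $(Q,\alpha)$ in a subtler way, and for a fixed pair most quadratic splitting fields of $Q$ do not work. Your recipe takes $L$ to be the discriminant field of $q$, i.e.\ the field of definition of the two fixed points of $\alpha$; this field does split $Q$ (the fixed points are themselves $L$-points of $Q$), but the required equivariant $\varphi$ can fail to exist over it. Take $k=\QQ$, $Q:\,x^2=3y^2+5z^2$ (pointless over $\QQ$, indeed over $\QQ_5$, by the usual mod $5$ descent) and $\alpha:(x:y:z)\mapsto(-x:y:z)$, whose fixed points $(0:\pm\sqrt{-15}:3)$ give your $L=\QQ(\sqrt{-15})$. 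Suppose some $\varphi:Q\to\PP^1_L$ satisfied $\varphi^{\sigma}=\varphi\alpha$. Then $\beta:=\varphi\alpha\varphi^{-1}=\varphi^{\sigma}\varphi^{-1}$ obeys $\beta^{\sigma}=\varphi(\varphi^{\sigma})^{-1}=\beta^{-1}=\beta$, so $\beta$ is a $\QQ$-rational involution of $\PP^1$; its fixed points are the $\varphi$-images of $(0:\pm\sqrt{-15}:3)$, hence distinct, swapped by $\sigma$, and lying in $\PP^1(L)$, so after conjugating by an element of $\PGL_2(\QQ)$ we may assume $\beta(t)=\nu/t$ with $\nu=-15$. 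Now let $C\subset\PP^2_{\QQ}$ be the conic $u^2+15v^2+15w^2=0$ and let $\psi:C_L\to\PP^1_L$ be $(u:v:w)\mapsto(u+\sqrt{-15}\,v:w)$; on $C$ one checks $\psi^{\sigma}=\beta\psi$, whence $(\psi^{-1}\varphi)^{\sigma}=\psi^{-1}\beta^{-1}\varphi\alpha=\psi^{-1}\varphi$, so $\psi^{-1}\varphi$ is a $\QQ$-isomorphism $Q\cong C$. This is absurd: $C$ has the $\QQ_5$-point $(0:\sqrt{-1}:1)$ (Hensel, as $2^2\equiv-1\bmod 5$), while $Q(\QQ_5)=\emptyset$. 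So over your $L$ no $\varphi$ exists at all; here one must instead take, e.g., $L=\QQ(\sqrt{5})$ or $L=\QQ(\sqrt{3})$ (so $L$ is not unique either). The same computation exposes the circularity of your third paragraph: $\varphi^{\sigma}$ intertwines $\alpha$ with $\beta^{\sigma}$, not with $\beta$, so it only ``induces a map on the common quotient'' once one knows $\beta^{\sigma}=\beta$, which is essentially the statement being proved; and your closing normal form, the involution $(u:v)\mapsto(-u:v)$, has rational fixed points, so by Proposition~\ref{prop:WeilTest} it can only produce the trivial twist, never a pointless conic.

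The paper avoids all of this precisely by not choosing $L$ in advance. It rigidifies $(Q,\alpha)$ by a $k$-rational degree $6$ divisor $\rR$ with $\Aut_K(Q,\rR)=\langle\alpha\rangle$ (Lemma~\ref{lem:RCons}), passes to the genus $2$ curve branched over $\rR$, and invokes the genus $2$ descent theorem of~\cite{carquer} to produce an isomorphism of pairs $\varphi:(Q,\rR)\to(\PP^1_k,\rR_0)$ over some Galois extension $M$; the map $\tau\mapsto\varphi^{-1}\varphi^{\tau}$ is then a homomorphism $\Gal(M|k)\to\langle\alpha\rangle$, and $L$ is obtained a posteriori as the fixed field of its kernel. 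In other words, the correct quadratic field is an \emph{output} of a cohomological argument into which $\alpha$ enters through the rigidifying divisor. In quaternionic terms, $L=k(\sqrt{d})$ must satisfy $[Q]\cong(d,\nu)$, where $k(\sqrt{\nu})$ is the field of the fixed points of $\alpha$; your proposal never imposes this compatibility, and your implicit choice $d=\nu$ violates it in the example above, since $(-15,-15)_{\QQ}\not\cong(3,5)_{\QQ}$.
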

\begin{proof}
  Choose $\rR_0$ as in Lemma~\ref{lem:RCons} and consider the pair
  $(\Qm_0,\rR_0)$, which is defined over $k$. Over $K$, there exists a degree
  $2$ cover $\Xm$ of $\Qm_0$ branched in $\rR_0$, which has reduced geometric
  automorphism group $\CG_2$. We emphasize that \emph{a priori} the cover $\Xm$
  need not be defined over $k$, even though $(\Qm_0,\rR_0)$ is.

  Regardless, the field of moduli of $\Xm$ with respect to the extension $K | k$
  equals $k$. Indeed, the configuration $(\Qm_0,\rR_0)$, which determines the
  isomorphism class of $\Xm$ over $K$, is Galois stable. Alternatively, if we
  choose some $K$-isomorphism $i : \Qm_0 \to \PP^1$, then we have $(i_*
  (\rR_0))^{\sigma} = i_*^{\sigma} (\rR_0^{\sigma}) = i_*^{\sigma} (\rR_0)$ for
  $\sigma \in \Gamma$. This shows that $i_* (\rR_0)$, which is the branch locus
  of $\Xm$, and $(i_* (\rR_0))^{\sigma}$, which is the branch locus of
  $\Xm_0^{\sigma}$,  differ by the $K$-automorphism $i_*^{\sigma} i_*^{-1}$ of
  $\PP^1$. We see that the branch loci of $\Xm_0$ and $\Xm_0^{\sigma}$,
  considered as degree $2$ covers, can be transformed into one another over $K$.
  The hyperelliptic curves $\Xm_0$ and $\Xm_0^{\sigma}$ are therefore
  $K$-isomorphic.

  By~\cite[Thm.6]{carquer}, this implies that the genus $2$ curve $\Xm$ is
  hyperelliptically defined over $k$. The descent morphism $\Xm \to \Xm_0$ to a
  model $\Xm_0$ over $k$ then yields an isomorphism
  \begin{equation*}
    \varphi : (\Qm_0,\rR_0) \longrightarrow (\PP^1 , \sS_0)
  \end{equation*}
  over some Galois extension $M$ of $k$. Then the map $\Gal (M | k) \to \Aut_{K}
  (\Qm_0,\rR_0) = \left\langle \alpha \right\rangle$ that sends $\tau$ to
  $\varphi^{-1} \varphi^{\tau}$ is a homomorphism because $\Aut_{K}
  (\Qm_0,\rR_0) = \Aut_k (\Qm_0,\rR_0)$. Indeed, we have $\varphi^{-1}
  \varphi^{\tau_1 \tau_2} = \varphi^{-1} \varphi^{\tau_1}
  (\varphi^{-1})^{\tau_1} \varphi^{\tau_1 \tau_2} = \varphi^{-1}
  \varphi^{\tau_1} \varphi^{-1} \varphi^{\tau_2}$.

  The kernel of this homomorphism is not all of $\Gal (M | k)$, because that
  would imply that $\Qm_0$ is isomorphic to $\PP^1$ over $k$. So this kernel cuts
  out a quadratic extension $L$ of $k$. By construction, $\varphi$ is then
  defined over $L$, and we have that $\varphi^{\sigma} = \varphi\, \alpha_0$.
\end{proof}


\begin{proposition}\label{prop:order2}
  Let $\Xm_0$ be a hyperelliptic curve over $k$. Suppose that the reduced
  automorphism group $\Gbar$ of $\Xm$ contains an element $\alpha_0$ of order
  $2$ that is defined over $k$. Then $\Xm_0$, considered as a curve over $K$,
  descends hyperelliptically to $k$.
\end{proposition}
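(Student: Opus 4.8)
We have a hyperelliptic curve $\Xm$ over $K$ with field of moduli $k$, and its reduced automorphism group $\Gbar$ contains an element $\alpha$ of order $2$ defined over $k$. We want to show $\Xm$ descends hyperelliptically.

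**Key setup.** Let me recall the hyperelliptic structure. $\Xm$ has a quotient map to $\PP^1$ via the hyperelliptic involution $\iota$, so $\Xm/\iota \cong \PP^1_K$ geometrically. The reduced automorphism group $\Gbar = G/\iota$ acts on this $\PP^1$. Now $\alpha \in \Gbar$ is defined over $k$... but wait, what does "defined over $k$" mean when $\Xm$ itself might not be defined over $k$ yet?

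Let me think about this more carefully. The setup is: $\Xm$ is over $K$ with field of moduli $k$. We want a descent. The quotient $Q = \Xm/\iota$ is a genus $0$ curve. Its field of moduli is also $k$ (since $\Xm$'s is). Actually, let me reconsider.

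The plan is to descend the branch data of the double cover $\pi : \Xm \to Q$, where $Q = \Xm / \iota$ is the genus-$0$ quotient of $\Xm$ by its (canonical, hence Galois-equivariant) hyperelliptic involution. Since $\Xm$ has field of moduli $k$, so does $Q$, and being of genus $0$ it descends to a curve over $k$, which I still denote $Q$; the hypothesis that $\alpha$ is defined over $k$ then reads $\alpha \in \Aut_k(Q)$, and as $\alpha \in \Gbar = \Aut_K(Q,\bb)$ it fixes the branch divisor $\bb$, a reduced divisor of degree $2g+2$ on $Q$. A hyperelliptic descent is the same as an isomorphism $\psi : Q \to \PP^1_k$ over $K$ for which $\psi(\bb)$ is defined over $k$: the binary form $f_0$ of degree $2g+2$ cutting out $\psi(\bb)$ then gives the model $y^2 = f_0(x,z)$. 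The field-of-moduli hypothesis supplies, for every $\sigma \in \Gamma$, an automorphism $\lambda_\sigma \in \Aut_K(Q)$ with $\bb^\sigma = \lambda_\sigma(\bb)$, and the whole point of the argument is to exploit $\alpha$ to control these $\lambda_\sigma$; without such an $\alpha$ descent can genuinely fail.

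I would split on whether $Q \cong \PP^1_k$. In the harder case $Q \ncong \PP^1_k$, I would invoke Lemma~\ref{lem:QuadExt}, which produces a quadratic extension $L | k$ with generator $\sigma$ and an isomorphism $\varphi : Q \to \PP^1_k$ defined over $L$ satisfying $\varphi^\sigma = \varphi\,\alpha$. Take $\psi = \varphi$ and set $\bb_0 = \varphi(\bb)$; since $\varphi$ is defined over $L$ and $Q_L \cong \PP^1_L$ is split, the split case applied over $L$ gives a hyperelliptic model of $\Xm$ over $L$ whose branch divisor is $\bb_0$, in particular defined over $L$. Using $\varphi^\sigma = \varphi\,\alpha$ together with $\alpha^2 = \mathrm{id}$ and $\alpha(\bb) = \bb$, one then computes
\begin{align*}
\bb_0^{\sigma} = \varphi^\sigma(\bb^\sigma) = \varphi\,\alpha(\bb^\sigma) = \varphi\,\alpha\,\alpha(\bb) = \varphi(\bb) = \bb_0,
\end{align*}
where the third equality uses the normalized relation $\bb^\sigma = \alpha(\bb)$. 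As $\bb_0$ is defined over $L$ and fixed by $\Gal(L|k)$, it is defined over $k$, and the corresponding form $f_0$ furnishes the hyperelliptic descent.

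In the remaining case $Q \cong \PP^1_k$ the automorphism $\alpha$ is an involution in $\PGL_2(k)$, so by the normalization in the proof of Lemma~\ref{lemma:qquota} I may choose coordinates over $k$ in which $\alpha : (x:z) \mapsto (-x:z)$; the $\alpha$-invariance of $\bb$ then says that $f_0$ is an even binary form. Here one argues directly that the residual field-of-moduli datum can be trivialized, i.e. that the $\lambda_\sigma$ may be normalized so that $\bb^\sigma = \bb$, whence $\bb$ is already defined over $k$.

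The main obstacle in both cases is exactly this normalization of the field-of-moduli datum: one must show that the isomorphisms $\Xm^\sigma \to \Xm$ underlying the field of moduli can be chosen so that the induced $\lambda_\sigma \in \Aut_K(Q)$ land in $\Gbar$ and reduce, on $\bb$, to $\alpha$ in the nonsplit case and to the identity in the split case. This is where the hypotheses that $\alpha$ has order $2$ and is defined over $k$ are essential, and it is the step that requires genuine work; once it is in place, the computation above and its split analogue finish the proof.
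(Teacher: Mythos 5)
Your proposal correctly identifies the two cases and the role of Lemma~\ref{lem:QuadExt}, but it contains a genuine gap, and the gap stems from a misreading of the hypothesis. The notation $\Xm/k$ in the statement means that $\Xm$ is \emph{already defined over} $k$: the proposition is invoked in the proof of Theorem~\ref{thm:MainTh1} precisely to upgrade an existing descent to a \emph{hyperelliptic} one. Under that reading, the quotient $Q = \Xm/\iota$, its branch divisor $\rR$, and (by hypothesis) $\alpha$ are all defined over $k$ from the outset; in particular $\rR^\sigma = \rR$ for every $\sigma$, and $\alpha(\rR) = \rR$ since $\alpha \in \Gbar = \Aut_K(Q,\rR)$. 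The paper's proof is then short: if $Q \cong \PP^1_k$ one is done immediately (the $k$-rational divisor $\rR$ on $\PP^1_k$ is cut out by a binary form over $k$), and otherwise Lemma~\ref{lem:QuadExt} gives $\varphi$ with $\varphi^\sigma = \varphi\alpha$, whence
\begin{align*}
\rR_0^{\sigma} = \varphi^{\sigma}(\rR^{\sigma}) = \varphi\alpha(\rR) = \varphi(\rR) = \rR_0,
\end{align*}
and the double cover of $\PP^1_k$ branched at $\rR_0$ is the desired model. No normalization of any moduli datum is needed.

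You instead assume only that $k$ is the field of moduli, and your argument then hinges on the ``normalized relation'' $\bb^\sigma = \alpha(\bb)$ (resp.\ $\bb^\sigma = \bb$ in the split case), which you explicitly defer as ``the step that requires genuine work.'' That step is not a technicality that can be filled in: it is the entire problem, and it is impossible in general. Indeed, since $\alpha$ fixes $\bb$ (as you yourself note), your relation $\bb^\sigma = \alpha(\bb)$ is literally $\bb^\sigma = \bb$, i.e.\ the $k$-rationality of the branch divisor on a $k$-model of $Q$ --- which is essentially the hyperelliptic descent you are trying to produce, so the completed argument would be circular. Worse, under your field-of-moduli-only reading the statement is false: Theorem~\ref{thm:MainTh3} and Example~\ref{ex:nondesc2} of this very paper exhibit curves with $\Gbar \cong \CG_2$ (whose unique involution is as canonically Galois-stable as one could ask) and field of moduli $\QQ$ that admit no descent whatsoever. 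The deferred normalization is obstructed exactly by the obstruction the paper computes; the proposition holds only because its hypothesis already supplies a model of $\Xm$ over $k$, which makes $\rR^\sigma = \rR$ automatic and reduces the proof to the telescoping computation you wrote down.
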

\begin{proof}
  Let $\iota_0$ be the hyperelliptic involution of $\Xm_0$. Then $\iota_0$ is
  defined over $k$, because it is the unique involution of $\Xm_0$ for which the
  quotient $\Qm_0 = \Xm_0 / \iota_0$ is of genus $0$. Consider $\Qm_0$ as a
  curve over $k$. If $\Qm_0$ is isomorphic to $\PP^1$ over $k$, then we are
  done. So assume the contrary.

  Let $\rR_0$ be the branch locus of the quotient morphism $\Xm_0 \to \Qm_0$.
  Let $\alpha_0$ be the nontrivial geometric automorphism of $(\Qm_0,\rR_0)$;
  it is unique by hypothesis. By uniqueness, $\alpha_0$ is defined over $k$, as
  are $\Qm_0$ and $\rR_0$. Choose $L$ and $\varphi$ as in
  Lemma~\ref{lem:QuadExt}. The divisor $\sS_0 = \varphi_* (\rR_0)$ is
  $L$-rational, but it is even $k$-rational since
  \begin{equation*}
    \sS_0^{\sigma} = (\varphi_* (\rR_0))^{\sigma} = \varphi_*^{\sigma}
    (\rR_0^{\sigma}) = (\varphi \alpha )_* (\rR_0) = \varphi_* ( \alpha_*
    (\rR_0) ) = \varphi_* (\rR_0) = \sS_0  .
  \end{equation*}
  Now the degree~$2$ cover of $\PP^1$ with branch locus $\sS_0$ is
  $K$-isomorphic to $\Xm$. So since $X$ is $K$-isomorphic to a degree~$2$ cover
  of $\PP^1$ branching over a $k$-rational divisor, it admits a hyperelliptic
  equation over $k$.
\end{proof}


\begin{proof}[Proof of Theorem~\ref{thm:MainTh1}]
  The case of even $g$ is due to Mestre in~\cite{mestre}, and Huggins proved the
  result in the case where $\Gbar$ is not tamely cyclic
  in~\cite[Thm.5.4]{huggins}. As for the case where $\Gbar$ is tamely cyclic of
  even order, this yields a pair $(\Qm_0,\rR_0)$ as in the proof of
  Proposition~\ref{prop:order2} whose reduced automorphism group is cyclic of
  even cardinality. Such a subgroup has a unique element $\alpha_0$ of order
  $2$, which is then defined over $k$ by uniqueness. It now suffices to invoke
  Proposition~\ref{prop:order2}.
\end{proof}

\subsection{An arithmetic criterion for hyperelliptic
descent}\label{sec:hypdescrit}

We can now characterize arithmetically whether a hyperelliptic curve $\Xm$
allows a hyperelliptic descent. Denote the quotient $\Xm / G$ by $\Bm$.  By
construction, $\Bm$ has a canonical Weil descent datum. Let $\Bm_0$ be the
corresponding model over $k$; its $k$-isomorphism class depends only on the
$K$-isomorphism class of $\Xm$. It is well-known (\emph{cf.}\ the discussion
in~\cite[Cor.2.3]{debes-emsalem}) that the existence of a $k$-rational point
on $\Bm_0$ implies that $\Xm$ descends.

%
%
\begin{theorem}\label{thm:MainTh2}
  Let $\Xm$ be a hyperelliptic curve over $K$ of genus $g$ whose field of moduli
  for the extension $K | k$ equals $k$. Let $\Gbar$ be the reduced automorphism
  group of $\Xm$. Then $\Xm$ descends hyperelliptically if and only if the
  canonical model $\Bm_0$ of the quotient $\Bm = \Xm / G$ has a $k$-rational
  point.
\end{theorem}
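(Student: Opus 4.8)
The plan is to prove both implications of the equivalence, reducing the statement in each direction to the machinery assembled in the previous subsection, and in particular to Proposition~\ref{prop:order2} and the structural analysis of the pair $(Q,\rR)$ where $Q = \Xm/\iota$. The easy direction is that a hyperelliptic descent produces a $k$-rational point on $B_0$. Indeed, a hyperelliptic descent is a model $y^2 = f_0(x,z)$ over $k$, so in particular $Q = \Xm/\iota$ descends to $\PP^1_k$, which has many $k$-rational points. Since $B = \Xm/G = Q/\Gbar = \PP^1/\Gbar$ is the further quotient by the reduced automorphism group and the descent datum on $B$ is the canonical one, a $k$-rational point on $\PP^1_k$ away from the (finitely many) branch points of $\PP^1 \to B$ descends to a $k$-rational point on $B_0$. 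One should take care that the chosen point is not fixed by any nontrivial element of $\Gbar$, which is possible because $k$ is infinite.

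For the converse, the plan is to assume $B_0(k) \neq \emptyset$ and to produce the hypothesis of Proposition~\ref{prop:order2}, namely an order $2$ automorphism of $\Xm/\iota$ defined over $k$, or else to descend $Q$ to $\PP^1_k$ directly. The key point is that $B_0 = (Q/\Gbar)_0$ carries the canonical Weil descent datum, and $Q \to B$ is the quotient by $\Gbar$. A $k$-rational point $P$ on $B_0$ lifts, over $K$, to a $\Gbar$-orbit on $Q$ that is stable under the Galois action twisted by the descent datum; that is, the orbit is $k$-rational as a divisor on $Q$. I would then argue that the existence of such a $k$-rational orbit forces $Q$ itself to descend to $\PP^1_k$: using a $k$-rational point (or a $k$-rational divisor of odd degree) on the conic $Q$, one concludes $Q \cong \PP^1_k$, after which the branch divisor $\rR$ descends as in the proof of Proposition~\ref{prop:order2} and the $2:1$ cover gives a hyperelliptic equation over $k$. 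When $\Gbar$ already contains an element of order $2$ over $k$, one can instead invoke Proposition~\ref{prop:order2} directly.

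The main obstacle, and the step requiring the most care, is controlling the \emph{field of definition} of the lift of the $k$-rational point $P \in B_0(k)$ back to $Q$, and ensuring that this produces genuinely $k$-rational data on $Q$ rather than data merely defined over the extension cut out by the $\Gbar$-cover. Concretely, the fibre of $Q \to B$ over $P$ is a $\Gbar$-torsor, and one must show that the canonical descent datum on $B_0$, combined with the canonical Weil cocycle relating $\Xm$ to its conjugates, makes this fibre into a $k$-rational effective divisor on the conic $Q$ whose degree is coprime to $2$ (or of odd degree), so that it witnesses a $k$-rational point on $Q$ and hence $Q \cong \PP^1_k$. This cocycle bookkeeping is exactly where the hypothesis that $k$ is the field of moduli enters, and where one must be careful to distinguish $\Aut_K$ from $\Aut_k$; the identity $\varphi^{-1}\varphi^\tau$ being a homomorphism, as established in Lemma~\ref{lem:QuadExt}, is the technical fact that makes the obstruction computation tractable. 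Once $Q \cong \PP^1_k$ is secured, the remainder is a direct application of the argument already given in Proposition~\ref{prop:order2}.
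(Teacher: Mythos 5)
Your forward direction is essentially the paper's argument and is fine (the caveat about avoiding branch points is unnecessary: the image of a $k$-point under the $k$-morphism $\PP^1_k \rightarrow B_0$ is a $k$-point in any case). The converse, however, has a genuine gap, and it is the central one. You work throughout with $Q = \Xm/\iota$ as if it carried a $k$-structure --- you speak of the fibre of $Q \rightarrow B$ over $P$ becoming ``a $k$-rational effective divisor on the conic $Q$'' --- but $Q$ is a curve over $K$ (indeed $Q \cong \PP^1_K$), and, unlike $B$, it has \emph{no canonical model over $k$}. The quotient $B = \Xm/G$ has a canonical descent datum because the isomorphisms $\Xm \rightarrow \Xm^{\sigma}$ are well defined up to $G$ and hence induce canonical maps $B \rightarrow B^{\sigma}$; the induced maps $Q \rightarrow Q^{\sigma}$ are only well defined up to $\Gbar$, which acts nontrivially on $Q$, and the failure of any choice of such maps to satisfy the cocycle condition is precisely the descent obstruction the theorem is about. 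So the step ``the orbit is $k$-rational as a divisor on $Q$'' is circular: producing a $k$-model of $Q$ (equivalently, of $\Xm$) is what has to be proved, and your proposal only promises to ``argue'' it. The paper breaks this circularity by first invoking the result cited from~\cite[Section 2.9]{debdou}: a $k$-rational point on $B_0$ already implies that some (not necessarily hyperelliptic) descent $\Xm_0$ of $\Xm$ exists. Only after that does the rest of your plan become available: by Theorem~\ref{thm:MainTh1}, the existence of $\Xm_0$ yields a hyperelliptic descent except when $g$ and $\#\Gbar$ are both odd, and in that remaining case the conic $Q_0 = \Xm_0/\iota$ is genuinely defined over $k$, the map $Q_0 \rightarrow B_0$ has odd degree $\#\Gbar$, and pulling back the rational point gives a $k$-rational divisor of odd degree on $Q_0$, which (after correcting by multiples of the canonical divisor) forces $Q_0 \cong \PP^1_k$ and hence a hyperelliptic equation over $k$.

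Two further points. First, your fallback on Proposition~\ref{prop:order2} when $\Gbar$ contains an involution suffers from the same problem: that proposition is stated for a curve $\Xm/k$, i.e.\ it presupposes that the pair $(Q,\rR)$ is defined over $k$, which you have not established at that stage. Second, even granting a $k$-structure on $Q$, the fibre of $Q \rightarrow B$ over $P$ has degree $\#\Gbar$, which is odd only when $\#\Gbar$ is odd; when $\#\Gbar$ is even, an even-degree rational divisor on a conic gives no rational point, so the ``degree coprime to $2$'' claim cannot be salvaged there. The order of operations in the paper --- first descend $\Xm$ abstractly using the rational point on $B_0$, then upgrade the descent to a hyperelliptic one --- is not optional in this proof.
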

\begin{proof} If $\Xm$ admits a hyperelliptic descent $\Xm_0$, then $\Bm_0$ has
  a rational point.  Indeed, the curve $\Bm_0$ can then be obtained as the
  quotient of $Q_0 = \Xm_0 / \iota_0 \cong \PP^1$ by the reduced automorphism
  group $\Gbar_0$ of $X_0$. Note that $\Gbar_0$ is defined over $k$, though its
  individual elements might not be.

  Conversely, if $\Bm_0$ has a $k$-rational point, then a descent $\Xm_0$ of
  $\Xm$ exists by~\cite{debes-emsalem}. In light of Theorem~\ref{thm:MainTh1},
  it then only remains to consider the case where the reduced automorphism group
  of $\Xm_0$ is tamely cyclic of odd order. So again let $\Qm_0$ be the quotient
  of $\Xm_0$ by its hyperelliptic involution. We get a map $\Qm_0 \to \Bm_0$ of
  odd degree, so that Lemma~\ref{lem:xarles}(ii) allows us to conclude that
  $\Qm_0$ is isomorphic with $\PP^1$ over $k$ as well. Therefore $\Xm_0$ is a
  degree $2$ cover of $\PP^1$ over $k$, so that $\Xm$ indeed descends
  hyperelliptically.
\end{proof}

The next proposition gives a concrete criterion for the presence of a rational
point on $\Bm_0$, which we will use in Section~\ref{sec:descent}. As usual, we
define the \emph{twist} of a curve $\Cm$ over $k$ to be a curve $\Cm'$ over $k$
that is isomorphic with $\Cm$ over $K$; we refer to~\cite[Ch.III.1]{serre-coho}
for a correspondence between the set of isomorphism classes of twists of $\Cm$
and the Galois cohomology set $H^1 (\Gal (K | k) , \Aut_K (\Cm))$.

\begin{proposition}\label{prop:WeilTest}
  Let $L$ be a quadratic extension of $k$, and let $\sigma$ be the nontrivial
  element of $\Gal (L | k)$. Let $\alpha_0 \in \Aut_k (\PP^1)$ be a
  $k$-automorphism of $\PP^1$ of order two defined over $k$, represented by an
  element $M_0$ of $\GL_2 (k)$. Let $c_K$ be the element of $H^1 (\Gal (K | k) ,
  \Aut_K (\PP^1))$ obtained by inflating the cocycle $c_L \in H^1 (\Gal (L | k)
  , \Aut_L (\PP^1)) = \Hom (\Gal (L | k) , \Aut_L (\PP^1))$ that sends $\sigma$
  to $\alpha_0$. Then the twist of $\PP^1$ over $k$ determined by $c_K$ is
  isomorphic to $\PP^1$ over $k$ if and only if $- \det (M_0)$ is a norm for the
  extension $L | k$.
\end{proposition}
\begin{proof}
  Since the characteristic polynomial of $M_0$ is $x^2-\nu_0$ for some $\nu \in
  k$, its Frobenius companion matrix equals $\smat{0}{\nu_0}{1}{0 }$. The twist
  corresponding to $c_K$ is isomorphic to $\PP^1$ over $k$ if and only if $c_K$
  is a coboundary. This is the case if and only if there exists an invertible
  matrix $N$ over $L$ such that we have the equality $N^{\sigma} M_0 = N$ in
  $\PGL_2 (L)$, or more explicitly, if there exists some scalar $\lambda \in L$
  such that
  \begin{equation}\label{meq}
    N^{\sigma} = \lambda N \smat{0}{\nu_0}{1}{0 }^{-1}  .
  \end{equation}
  Writing out \eqref{meq} and eliminating, we get that $\lambda \in L$ and
  $\lambda^{\sigma} \lambda = \nu_0$, which shows that our condition is
  necessary.  Conversely, if such a $\lambda$ exists, then we can take
  \begin{displaymath}
    N = \smat{1}{\lambda^{\sigma}}{\beta}{\lambda^{\sigma} \beta^{\sigma}},
  \end{displaymath}
  where $\beta$ is any generator of $L$ over $k$.
\end{proof}

\section{Invariants}\label{sec:dihedral}

Let $\Xm$ be a hyperelliptic curve of genus $g$ over $K$, defined by a
homogeneous binary form $f$ over $K$ of degree $2g + 2$. Suppose that the
reduced automorphism group $\Gbar$ of $\Xm$ over $K$ is tamely cyclic of order
$n > 1$. In this section we will construct invariants of $f$ that can be used to
determine the descent obstruction (general or hyperelliptic) for $\Xm$, as well
as a corresponding descent of $\Xm$ if one of these obstructions vanishes. To
this end, we first construct geometric normal forms for $f$. Modulo a
normalization that we do not make, the discussion at the beginning of this
section is completely analogous to that in~\cite[Sec.2]{gutsha}.

Since we assumed that the reduced automorphism group of $\Xm$ is tame, we can
diagonalize one of its generators $\alpha$ over $K$. Making the corresponding
change of basis if necessary, we may therefore suppose that, using the notation
in the introduction,
\begin{equation}\label{eq:Gbar0}
  \Gbar = \CG_n = \left\langle \alpha \right\rangle  .
\end{equation}
The elements of $\Gbar$ then only have fixed points at $(0:1)$ and $(1:0)$.
Since we know that the binary form $f$ defining $\Xm$ is of even degree without
repeated roots, this implies that $f$ has one of the \emph{normal forms} over
$K$ figuring in the following definition.

\begin{definition}\label{def:typ}
  Let $n,m$ be positive integers. A binary form $f$ of even degree is said to
  be \emph{of type $(0,n,m)$}, resp.~$(1,n,m)$, resp.~$(2,n,m)$, if
  it is of the form
  \begin{align}
    f & = a_m x^{mn} + a_{m-1} x^{(m-1)n} z^n + \ldots + a_1 x^n z^{(m-1)n} + a_0
    z^{mn}  \label{ccform}  ,  && \text{resp.}  \\
    f & = z ( a_m x^{mn} + a_{m-1} x^{(m-1)n} z^n + \ldots + a_1 x^n z^{(m-1)n} +
    a_0 z^{mn})  \label{cform1}  , && \text{resp.} \\
    f & = x z (a_m x^{mn} + a_{m-1} x^{(m-1)n} z^n + \ldots + a_1 x^n z^{(m-1)n} +
    a_0 z^{mn}) . && \label{cform2}
  \end{align}
  for some $m$ and $n$, while also satisfying the following properties:
  \begin{enumerate}[(i)]
    \item $\Aut (f)$ coincides with the group $\CG_n$ from~\eqref{eq:Gbar0},
    and
    \item $f$ has no repeated linear factors.
  \end{enumerate}
\end{definition}

\begin{remark}
  We impose condition (ii) in Definition \ref{def:typ} to ensure that the forms
  $f$ under consideration define non-singular hyperelliptic curves.
  Condition (i) will hold for generic forms $f$ as in
  \eqref{ccform}-\eqref{cform2}. It is important that we restrict ourselves to
  this generic case by imposing (i), since the statement of our Theorem
  \ref{thm:MainTh3} essentially depends on the value of $m$, which in turn
  depends on that of $n$ once the degree of $f$ is fixed.
\end{remark}

As in~\cite{gutsha} or~\cite[Stz.5.2]{brandt-stich}, a calculation shows the
following.
\begin{proposition}
  The automorphism groups $\Aut (\Xm)$ of the hyperelliptic curves $\Xm : y^2 =
  f(x,z)$ defined by the forms in Definition~\ref{def:typ} are as follows.
  \begin{enumerate}[(i)]
    \item If $f$ is of type $(0,n,m)$, then $\Aut (\Xm)$ is isomorphic to the
      group $\CG_2 \times \CG_n$, generated by $(x:z:y) \mapsto (\zeta_n x : z :
      y)$ and $(x:z:y) \mapsto (x:z:-y)$. 
    \item If $f$ is of type $(1,n,m)$, then $\Aut (\Xm)$ is isomorphic to the
      group $\CG_{2 n}$, generated by $(x:z:y) \mapsto (\zeta_n x : z : -y)$.
    \item If $f$ is of type $(2,n,m)$, then $\Aut (\Xm)$ is isomorphic to the
      group $\CG_{2 n}$, generated by $(x:z:y) \mapsto (\zeta_n x : z :
      \zeta_{2n} y)$.
  \end{enumerate}
\end{proposition}


Our methods now diverge from those of~\cite{gutsha}; we do not further normalize
to suppose $a_m = a_0 = 1$ so as to avoid breaking symmetry. This will make it
easy to transform $f$ to a normal form over an at worst quadratic extension of
the base field $k$, as we shall see Proposition~\ref{prop:f0}.

\subsection{Restricting isomorphisms}

%
%

We start by determining the possible isomorphisms between two binary forms of
type $(i,n,m)$. Let $T \subset \GL_2 (K)$ be the subgroup of diagonal matrices
and define
\begin{equation}\label{eq:D}
  D = \left\langle T, \smat{0}{1}{1}{0} \right\rangle  ,
\end{equation}
which is an extension of $\ZZ / 2 \ZZ$ by $T$.

\begin{proposition}\label{prop:NormD}
  Consider two binary forms $f , f'$ of type $(i,n,m)$ with $n > 1$. Suppose
  that $A \in \GL_2 (K)$ is such that $f' \sim A . f$. Then $A \in D$, and
  moreover $A \in T$ if $i = 1$.
\end{proposition}
\begin{proof}
  Under the hypotheses of the Proposition, we have that $A g A^{-1} f' \sim A g
  A^{-1} A f \sim A g f \sim A f \sim f'$ for all $g \in \Aut (f)$, or in other
  words $A \Aut (f) A^{-1} \subset \Aut
  (f')$. Since hypothesis (i) in Definition~\ref{def:typ} is verified for both
  $f$ and $f'$, we therefore see that $A \CG_n A^{-1} = \CG_n$, showing that $A$
  is indeed in the normalizer of $\CG_n$.  The inclusion $A \in D$ then results
  from the description of this normalizer in~\cite[Lem.3.3]{huggins}.

  In the case $i = 1$ we can conclude that $A \in T$ because the matrix
  $\smat{0}{1}{1}{0}$ sends $z$ to $x$, which is impossible since $f$ being
  of type $(1,n,m)$ implies that the coefficients in~\eqref{cform1} satisfy $a_m
  a_0 \neq 0$.
\end{proof}

In the following exposition, we will first focus on the normal
form~\eqref{ccform} with $m = 2 \ell$ even. The other cases are discussed in the
final Section~\ref{sec:other}.

\subsection{Homogeneous diagonal invariants}

We want to develop the invariant theory of binary forms of type $(0,n,2\ell)$ under
the action of the group $D$. We first consider the action of the simpler index
$2$ subgroup $T$ of $D$. On the coefficients in~\eqref{ccform}, the action of an
element $\smat{\lambda}{0}{0}{\mu}$ of $T$ is given by
\begin{small}
  \begin{equation}\label{cctrans}
    (a_{m} , a_{m-1} , \ldots , a_1 , a_0) \longmapsto (\lambda^{m n} a_{m} ,
    \lambda^{(m - 1) n} \mu^n a_{m-1} , \ldots , \lambda^n \mu^{(m - 1) n} a_1
    , \mu^{m n} a_0 ) .
  \end{equation}
\end{small}
We now wish to consider the homogeneous invariants under this action, that is,
those polynomial expressions that are actually invariant under the action of the
proper subgroup $T \cap \SL_2 (K)$ of $T$. The ring of these invariants admits a
weight decomposition under the action of the full group $T$; an element $I$ is of
weight $w$ if $A \in T$ sends $I$ to $\det(I)^{w \ell} I$. More intuitively,
this simply means that $I$ has degree $w$ as a homogeneous polynomial.

We will construct small systems of invariants that allow us to distinguish the
orbits of binary forms of type $(0,n,m)$ under the action of $T$. First we
consider the following homogeneous invariants for the action of $T$, which will
turn out to suffice for distinguishing most of these binary forms:
\begin{small}
  \begin{displaymath}
    \begin{array}{lrcll}
      \mathrm{Degree}\ 1: & J_1 & = & a_{\ell}, &\\[0.15cm]
      \mathrm{Degree}\ 2: &J_{2,0} & = & a_{2 \ell} a_0 ,\ &
      J_{2,1}  =  a_{2 \ell -1} a_1 ,\  
      \ldots,\  
      J_{2,\ell-1}  =  a_{\ell+1} a_{\ell-1}, \\[0.15cm]
      \mathrm{Degree}\ 3: &J_3 & = & a_{\ell+2} a_{\ell-1}^2  &\\[0.15cm]
      \mathrm{Degree}\ 4: &J_4 & = & a_{\ell+3} a_{\ell-1}^3  &\\[0.15cm]
      && \vdots & &\\
      \mathrm{Degree}\ \ell+1: &J_{\ell + 1} & = & a_{2\ell} a_{\ell-1}^{\ell}.&
    \end{array}  
  \end{displaymath}
\end{small}
The first index for these invariants indicates their homogeneous degree.

\begin{definition}\label{def:gendiaginv}
  We call the invariants $J_1 , J_{2,0} , \ldots , J_{2,\ell-1} , J_3 ,
  \ldots J_{\ell+1}$ defined above the \emph{generic homogeneous diagonal
  invariants} (for binary forms of type $(0,n,m)$).
\end{definition}

\begin{example}\label{ex:0n4diag}
  For forms $f$ of type $(0,n,4)$ given by $f = a_4 x^{4 n} + a_3 x^{3 n} z^{n}
  + a_2 x^{2 n} z^{2 n} + a_1 x^n z^{3 n} + a_0 z^{4 n}$, the generic
  homogeneous diagonal invariants are given by $J_1 = a_2 , J_{2,0} = a_4 a_0 ,
  J_{2,1} = a_3 a_1$ and $J_3 = a_4 a_1^2$. Note that the case $n = 2$ yields a
  class of hyperelliptic genus $3$ curves with extra involutions.
\end{example}

Using the generic homogeneous dihedral invariants already suffices to deal with
most binary forms of type $(0,m,n)$:

\begin{proposition}\label{prop:GenTInv}
  Suppose that $f$ and $f'$ are binary forms of type $(0,n,m)$ such that
  \begin{equation*}
    a_{2\ell} , a_{2\ell-1} , \ldots a_{\ell+2}, a_{\ell-1} \neq 0
  \end{equation*}
  and
  \begin{equation*}
    a'_{2\ell} , a'_{2\ell-1} , \ldots a'_{\ell+2}, a'_{\ell-1} \neq 0 .
  \end{equation*}
  If the generic homogeneous diagonal invariants $J$ and $J'$ of $f$ and $f'$
  define the same point in the corresponding weighted projective space, then
  there exists an $A \in T$ such that $f' \sim A  . f$.
\end{proposition}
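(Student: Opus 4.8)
The plan is to produce the required torus element $A$ explicitly by realizing the sought transformation as a geometric progression on the coefficient ratios. Write $A = \smat{\lambda}{0}{0}{\mu}$ and set $u = \lambda^n$, $v = \mu^n$; by~(\ref{cctrans}) the element $A$ multiplies $a_i$ by $u^i v^{m-i}$. Since $K$ is algebraically closed every pair $(u,v) \in (K^*)^2$ is of this form, so finding $A \in T$ with $f' \sim A.f$ is equivalent to finding $u,v,\kappa \in K^*$ with $a_i' = \kappa\, u^i v^{m-i} a_i$ for all $i$. Putting $w = u/v$ and $C = \kappa v^m$, this in turn amounts to finding $w, C \in K^*$ such that the ratios $\rho_i := a_i'/a_i$ satisfy $\rho_i = C w^i$ at every index with $a_i \neq 0$, together with the agreement of zero patterns $a_i = 0 \Leftrightarrow a_i' = 0$.

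First I would unwind the hypothesis on the invariants. Two tuples define the same point of the weighted projective space precisely when there is an $s \in K^*$ for which each invariant of degree $d$ scales as $J' = s^d J$; concretely $a_\ell' = s\, a_\ell$, $a_{2\ell-j}' a_j' = s^2 a_{2\ell-j} a_j$ for $0 \le j \le \ell-1$, and $a_{\ell+k-1}'(a_{\ell-1}')^{k-1} = s^k a_{\ell+k-1} a_{\ell-1}^{k-1}$ for $3 \le k \le \ell+1$. The genericity hypothesis then forces the zero patterns to agree: every coefficient other than the nonzero anchors $a_{\ell+2}, \ldots, a_{2\ell}, a_{\ell-1}$ is either the degree-one invariant $J_1 = a_\ell$ itself, or occurs as a factor of one of the degree-two product invariants $J_{2,j}$ whose remaining factor is one of the anchors; in each case it vanishes for $f$ exactly when the corresponding invariant does, hence exactly when it vanishes for $f'$.

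Next I would anchor the geometric progression at the index $\ell-1$, which is available precisely because $a_{\ell-1}$ and $a_{\ell-1}'$ are nonzero. Set $w := s\, a_{\ell-1}/a_{\ell-1}' = s/\rho_{\ell-1}$ and $C := \rho_{\ell-1}\, w^{-(\ell-1)} = s\, w^{-\ell}$, so that $\rho_{\ell-1} = C w^{\ell-1}$ by construction. It then remains to verify $\rho_i = C w^i$ at each other nonzero index, and every such check reduces to one scaling relation above: the relation from $J_k$ gives $\rho_{\ell+k-1} = s\, w^{k-1} = C w^{\ell+k-1}$ for the high indices $\ell+2, \ldots, 2\ell$; the relation from $J_1$ gives $\rho_\ell = s = C w^\ell$; the relation from $J_{2,\ell-1}$ gives $\rho_{\ell+1} = s^2/\rho_{\ell-1} = C w^{\ell+1}$; and for a low index $j \le \ell-2$ the relation from $J_{2,j}$, combined with the already-established value $\rho_{2\ell-j} = C w^{2\ell-j}$, yields $\rho_j = s^2/\rho_{2\ell-j} = C w^j$. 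With zero patterns matched this gives $a_i' = C w^i a_i$ for all $i$, and choosing $u = w$, $v = 1$, $\kappa = C$ together with $n$-th roots $\lambda, \mu$ realizes $(w,C)$ by a matrix $A \in T$ with $f' \sim A.f$.

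The main obstacle I anticipate is the bookkeeping that makes the single scalar $s$ propagate consistently through this interlocking system: one must confirm that the high-index relations genuinely pin down $w$ through the anchor $a_{\ell-1}$ rather than leaving a spurious free parameter, and that the low-index relations are then automatically consistent rather than over-determining the values. The crucial structural input is exactly that $a_{\ell-1}$ is a nonzero anchor appearing in all of $J_3, \ldots, J_{\ell+1}$ and in $J_{2,\ell-1}$, tying the entire high block of coefficients to one ratio $w$; once this is secured, the pairing invariants $J_{2,j}$ transport the progression to the low block for free. A secondary point to handle with care is the existence and harmless non-uniqueness of $s$ when $a_\ell = 0$ or when some degree-two invariants vanish, as well as the degenerate behaviour for the smallest values of $\ell$, where the list of high invariants is short.
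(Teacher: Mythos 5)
Your proof is correct and follows essentially the same route as the paper's: both arguments rest on the observation that, under the genericity hypotheses, the diagonal invariants together with the nonzero anchor coefficient $a_{\ell-1}$ determine the form, with the residual one-parameter freedom in $T$ used to match that coefficient (the paper normalizes $a_{\ell-1}$ by a matrix $\smat{\lambda}{0}{0}{\lambda^{-1}}$; you absorb the same freedom into the explicit parameter $w$). Your write-up simply carries out in full the reconstruction and zero-pattern bookkeeping that the paper's proof leaves as ``obvious''.
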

\begin{proof}
  Scaling if necessary, we may suppose that $J$ and $J'$ are equal. Then a
  suitable modification by a matrix of the form
  $\smat{\lambda}{0}{0}{\lambda^{-1}}$ can be used to ensure that $a_{\ell - 1}
  = a'_{\ell - 1}$. This does not affect the equality of $J$ and $J'$ since this
  matrix has trivial determinant. Our result is now clear, since the other $a_i$
  can be read off from the values of the generic homogeneous diagonal invariants
  once $a_{\ell - 1} \neq 0$ is known.
\end{proof}


In the non-generic case (\ie, when one of the conditions of
Proposition~\ref{prop:GenTInv} is not satisfied), the construction of the
appropriate homogeneous diagonal invariants is slightly more complicated. To
proceed in these cases, we first note that the set of indices of the
coefficients $a_j$ of $f$ that are nonzero do not change under the action of
$T$, and also that $a_0$ and $a_{2\ell}$ are never zero. Considering these
indices allows us to determine which small set of modified invariants we need to
use.

\begin{definition}\label{def:Sadm}
  Let $f$ be a binary form of type $(0,n,m)$ as in~\eqref{ccform}. Given an
  integer $r \leq m + 1$ and a tuple $S = (s_1 , \ldots , s_r )$ of distinct
  integers in $\{ 0, \dots ,m \}$, we say that $f$ is \emph{$S$-admissible} if
  \begin{itemize}
    \item[(S1)] $a_s \neq 0$ for all $s \in S$ and
    \item[(S2)] if one of $a_i$, $a_{2 \ell - i}$ is nonzero, then exactly one
      element of $\{ i , 2 \ell - i \}$ is in $S$.
  \end{itemize}
\end{definition}


Clearly every binary form $f$ of type $(0,n,m)$ is $S$-admissible for some
$S$. We now construct the homogeneous invariants of $T$ that are monomials in
the $\{ a_s : s \in S \}$.

\begin{proposition}\label{prop:NonGenTInv}
  Under the hypotheses (S1)-(S2), associate with $S$ the single-row matrix $M_S
  = (s_1 - \ell , \ldots , s_{r} - \ell)$ over $\QQ$. Then the elements of $\ker
  (M_S) \cap \NN^{r}$ are in one-to-one correspondence with the homogeneous
  invariants of $T$ for the family of $S$-admissible binary forms that are
  monomials in $\{ a_s : s \in S \}$, by the association $v \leftrightarrow
  \prod_{i = 1}^{r} a_{s_i}^{v_i}$.
\end{proposition}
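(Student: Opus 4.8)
The plan is to show that a monomial $\prod_{i=1}^{\#S} a_{s_i}^{v_i}$ with $v \in \NN^{\#S}$ is invariant under $T$ precisely when $v \in \ker(M_S)$, and conversely. First I would compute the effect of the diagonal action on such a monomial. By the transformation law~(\ref{cctrans}), the coefficient $a_j$ is sent to $\lambda^{jn} \mu^{(m-j)n} a_j = \lambda^{jn} \mu^{(2\ell - j)n} a_j$ (using $m = 2\ell$). Hence the monomial $\prod_i a_{s_i}^{v_i}$ is multiplied by the scalar
\begin{align*}
\prod_{i=1}^{\#S} \left( \lambda^{s_i n} \mu^{(2\ell - s_i) n} \right)^{v_i}
= \lambda^{n \sum_i s_i v_i} \; \mu^{n \sum_i (2\ell - s_i) v_i} .
\end{align*}
So the monomial is a (homogeneous) invariant for $T$ if and only if this scalar equals $1$ for all $\lambda, \mu \in K^{\times}$, which forces both exponents to vanish: $\sum_i s_i v_i = 0$ and $\sum_i (2\ell - s_i) v_i = 0$.

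The key observation is that these two conditions are equivalent, under the homogeneity built into the weighted projective setting, to the single condition $\sum_i (s_i - \ell) v_i = 0$, which is exactly $v \in \ker(M_S)$. Indeed, subtracting the two vanishing conditions gives $\sum_i (2 s_i - 2\ell) v_i = 0$, i.e.\ $M_S v = 0$. The point is that we are working up to the equivalence $\sim$ (scaling of $f$), so we only ask for invariance up to a scalar that is itself a common scaling; this means we should only impose that the two exponents of $\lambda$ and $\mu$ be \emph{equal}, rather than both zero. Thus the precise condition for the monomial to be $T$-invariant modulo scaling is $\sum_i s_i v_i = \sum_i (2\ell - s_i) v_i$, which rearranges directly to $\sum_i (s_i - \ell) v_i = 0$, namely $v \in \ker(M_S)$. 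I would state this carefully, pinning down whether the invariants are taken literally or up to scaling, since this is what distinguishes the sum-to-zero condition from the symmetric-difference condition encoded by $M_S$.

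For the converse direction, given $v \in \ker(M_S) \cap \NN^{\#S}$, the computation above runs backwards: the associated monomial $\prod_i a_{s_i}^{v_i}$ transforms by the scalar $\lambda^{n \sum_i s_i v_i} \mu^{n \sum_i (2\ell - s_i) v_i}$, and the condition $M_S v = 0$ guarantees the two exponents coincide, so this monomial is invariant in the appropriate (projective) sense. The $S$-admissibility hypothesis ensures each $a_{s_i}$ is nonzero, so the monomial is a genuine nonzero invariant function on the relevant family. The bijection $v \leftrightarrow \prod_i a_{s_i}^{v_i}$ is then immediate: distinct exponent vectors give distinct monomials (by unique factorization of monomials in the polynomial ring $K[a_s : s \in S]$), and every $T$-invariant monomial in the $a_s$ arises this way by the forward direction.

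The main obstacle I anticipate is bookkeeping the exact sense in which ``homogeneous invariant'' is meant, since the matrix $M_S$ encodes the \emph{single} linear condition $\sum (s_i - \ell) v_i = 0$ rather than the two conditions one would naively get from demanding strict $T$-invariance (a scalar of exactly $1$). Reconciling this requires being explicit that invariance is understood modulo the overall scaling of $f$ by a constant, equivalently that $\lambda^a \mu^b$ acts trivially whenever $a = b$ on the relevant homogeneous piece; the translation from ``both exponents zero'' to ``exponents equal'' is precisely the passage $(\sum s_i v_i = 0 \text{ and } \sum (2\ell - s_i) v_i = 0) \leftrightarrow (\sum(s_i - \ell) v_i = 0)$, and making this correspondence airtight is the crux of the argument. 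Everything else is a direct unwinding of the transformation formula~(\ref{cctrans}).
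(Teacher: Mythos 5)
Your proposal is correct and takes essentially the same route as the paper: the paper's proof is precisely the one-line remark that everything follows from the transformation law~(\ref{cctrans}), which you unwind in detail. Your explicit resolution of the crux --- that ``homogeneous invariant'' must be read in the weighted-projective (relative-invariant) sense, so the condition is \emph{equality} of the $\lambda$- and $\mu$-exponents (equivalently, that the monomial transforms by the $d$-th power of the common character $(\lambda\mu)^{\ell n}$) rather than their vanishing --- is exactly the content the paper leaves implicit.
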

\begin{proof}
  This follows from the transformation behavior of the exponents of the
  coefficients $a_i$, which is given in~\eqref{cctrans}.
\end{proof}

Generalizing Proposition~\ref{prop:GenTInv}, it turns out that together with the
invariants $J_1 , J_{2,0} , \ldots J_{2,\ell-1}$ these new homogeneous diagonal
invariants allow one to reconstruct an $S$-admissible binary form $f$, as the
following proposition shows.

\begin{proposition}\label{prop:param}
  Let $f$ and $f'$ be two $S$-admissible binary forms of type $(0,n,m)$. There
  exists a finite subset $R$ of the invariants constructed in
  Proposition~\ref{prop:NonGenTInv} with the property that there exists an $A
  \in T$ such that $f' \sim A  . f$ if and only if the values of the invariants
  of $f$ and $f'$ at $R \cup \{ J_{2,0} , \ldots J_{2,\ell - 1} \}$ determine
  the same point in the corresponding weighted projective space.
\end{proposition}
\begin{proof}
  We will assume that $\#S>1$, since the case $S = 1$ is easy. Before starting
  our construction, we modify $S$; if $M_S$ consists completely of either only
  strictly positive or only strictly negative elements, then we change the entry
  $2 \ell$ of $S$ to $0$ or inversely. Note that $f$ and $f'$ will still be
  $S$-admissible after this change since $a_0 a_{2 \ell} \ne 0$.

  We first construct a $\ZZ$-basis of the $\ZZ$-module $K_S = \ker (M_S) \cap
  \ZZ^{S}$ used in Proposition~\ref{prop:NonGenTInv}. The module $K_S$ is
  torsion-free, since it is a submodule of a torsion-free $\ZZ$-module.
  Furthermore, the quotient $\ZZ^{S} / K_S$ is torsion-free as well. Indeed,
  suppose that $n x \in K_S$ for some $n \in \ZZ$ and $x \in \ZZ^{S}$. Then $M_S
  (n x) = 0$, so $M_S (x) = 0$ and $x \in \ker (M_S) \cap \ZZ^{S} = K_S$. We
  thus have an exact sequence of finitely generated free $\ZZ$-modules
  \begin{equation}\label{eq:splitseq}
    0 \longrightarrow K_S \longrightarrow \ZZ^{S} \longrightarrow \ZZ^{S} / K_S
    \longrightarrow 0 .
  \end{equation}
  Choose a basis $\{v_i \}_{i = 1}^{\#S - 1}$ of $K_S$. Then since sequence
  \eqref{eq:splitseq} is split, there exists a vector $w \in \ZZ^{S}$ such that
  $\ZZ^{S}$ has basis $\{ v_i \}_{i = 1}^{\#S - 1} \cup \{ w \}$.

  We will now construct an element $v \in K_S$ such that all the entries of $v$
  are strictly positive. To accomplish this, we note that not all the entries of
  $M_S$ have the same sign, since in this case $f$ would have repeated roots.
  Therefore, given an index $i$ of $M_S$, we can find another
  index $j$ such that $(M_S)_i$ and $(M_S)_j$ have the opposite sign. We can now
  construct an elements of $K_S$ whose only nontrivial entries are at $i$ and
  $j$, with values $(M_S)_j$ and $-(M_S)_i$. Multiplying by $-1$ if necessary,
  we get an element of $\NN^S \cap K_S$ that is nontrivial at the index $i$.
  Summing over the indices $i$ now gives the requested element $v$ of $K_S$.

  We now claim that there exists a basis $\{v_i \}_{i = 1}^{\#S - 1} \cup \{
  w \}$ of $\ZZ^S$ all of whose elements are in $\NN^{S}$. To see this, consider
  the element $v$ constructed in the previous paragraph and choose $v_1$ such
  that $\NN v_1 = \QQ v \cap \NN^S$.  Then $v_1$ also has all of its entries
  strictly positive. Moreover, $v_1$ can again be completed to a basis $\{ v_i
  \}_{i = 1}^{\#S - 1}$ of $K_S$ because the same argument used to produce the
  sequence~\eqref{eq:splitseq} shows the existence of a split exact sequence
  of free $\ZZ$-modules
  \begin{equation*}
    0 \longrightarrow \ZZ v_1 \longrightarrow K_S \longrightarrow K_S  / \ZZ v_1
    \longrightarrow 0 .
  \end{equation*}
  It then only remain to add sufficiently large multiples of $v_1$ to the other
  elements of the resulting basis. This yields the requested basis $\{v_i \}_{i
  = 1}^{\#S - 1}$ of $K_S$, which we can augment to a basis $\{ v_i \}_{i =
  1}^{\#S - 1} \cup \{ w \}$ of $\ZZ^S$ as before. Moreover, by adding multiples
  of the $v_i$ to $w$, we can insure that $w$ is in $\NN^{S}$ as well, which
  implies our claim.

  After these preparations, the proof of the proposition is straightforward. The
  monomials corresponding to the basis elements $v_i$ under the correspondence
  in Proposition~\ref{prop:NonGenTInv} will now play the role of the generic
  homogeneous diagonal invariants; they will turn out to distinguish the orbits
  under $T$ of $S$-admissible binary forms. We let $t = \prod_{i = 1}^{\#S}
  a_{S_i}^{w_i}$ be the monomial corresponding to $w$. Since $w$ is not in
  $K_S$, we can use matrices of the form $\smat{\lambda}{0}{0}{\lambda^{-1}}$ as
  in the proof of Proposition~\ref{prop:GenTInv} to suppose that the value of
  $t$ is the same for $f$ and $f'$ without affecting the value of the invariants
  corresponding to the $v_i$. And as in that same proof, knowing $t$ and the
  value of these invariants along with the invariants $J_1 , J_{2,0} , \ldots
  J_{2,\ell - 1}$ determines the coefficients of the binary forms involved.
  Indeed, because $\ZZ^S$ has basis $\{ v_i \}_{i = 1}^{\#S - 1} \cup \{ w \}$,
  we can reconstruct the nonzero coefficients $\{a_s : s \in S \}$. The
  invariants $J_{2,0} , \ldots J_{2,\ell - 1}$ then determine the other
  coefficients by property (S2) in Definition~\ref{def:Sadm}.
\end{proof}

\begin{definition}\label{def:Sdiaginv}
  Given a binary form $f$ of type $(0,n,m)$ and any tuple $S$ for which $f$ is
  $S$-admissible, we call any of the finite sets $R \cup \{ J_{2,0} , \ldots J_{2,\ell -
  1} \}$ constructed in Proposition~\ref{prop:NonGenTInv} the
  \emph{homogeneous diagonal invariants} of $f$.
%
\end{definition}

\begin{remark}
  It may seem unnatural to modify invariants depending on the vanishing behavior
  of the coefficients of $f$, but in practice this is very useful, since the
  parametrization from Corollary~\ref{cor:param} is crucial for our
  reconstruction purposes. We again emphasize that once an initial binary form
  $f$ of type $(0,n,m)$ is given, one sees immediately which invariants should
  be used; indeed, the set $S$ that one can take in Definition~\ref{def:Sadm}
  are purely determined by the vanishing behavior of the coefficients of $f$.
\end{remark}

\begin{corollary}\label{cor:param}
  The set of $S$-admissible binary forms with given $S$-homogeneous diagonal
  invariants is a rational space of dimension $1$.
\end{corollary}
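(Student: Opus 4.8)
The plan is to read the corollary directly off the reconstruction underlying Propositions~\ref{prop:GenTInv} and~\ref{prop:param}. Fix a point of the weighted projective space of arithmetic diagonal invariants, and choose a representative $(J_1, J_{2,0}, \ldots, J_{2,\ell-1}, J_3, \ldots, J_{\ell+1})$. By these propositions, the set in question---the $S$-admissible forms $f$ as in~(\ref{ccform}) with these invariants, taken modulo the scalar equivalence $\sim$---is a single orbit under the diagonal torus $T$. Since the scalar matrices act trivially modulo $\sim$, this action factors through $T / (\text{scalars}) \cong \mathbb{G}_m$, so the fiber is a single $\mathbb{G}_m$-orbit; in particular it is irreducible of dimension at most $1$. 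It then remains to exhibit it explicitly as a rational curve of dimension exactly $1$.

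In the generic case I would use $u = a_{\ell-1} \in K^*$ as the single free parameter. Setting $a_\ell = J_1$, $a_{\ell-1} = u$, $a_{\ell+1} = J_{2,\ell-1}/u$, and $a_{\ell+d-1} = J_d / u^{d-1}$ for $3 \le d \le \ell+1$ fixes the upper half of the coefficients, after which the degree-$2$ invariants determine the remaining $a_i = J_{2,i}/a_{2\ell-i}$. A direct substitution confirms that the resulting form $f_u$ has exactly the prescribed invariants, so $u \mapsto f_u$ is a morphism $K^* \to \PP(\text{forms})$ landing in the fiber. As $a_{\ell-1} = u$ is a non-constant coordinate, the image has dimension $1$; being contained in an irreducible fiber of dimension at most $1$, it is dense, and the fiber is therefore the rational curve it parametrizes. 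In the non-generic, $S$-admissible case I would run the identical argument with the monomial $t = \prod_{s \in S} a_s^{w_s}$ from the proof of Proposition~\ref{prop:param} in place of $u$: because $\{v_i\}_{i=1}^{\#S-1} \cup \{w\}$ is a $\ZZ$-basis of $\ZZ^S$, the reconstruction writes each $a_s$, and hence through $J_1, J_{2,0}, \ldots, J_{2,\ell-1}$ every coefficient of $f$, as a Laurent monomial in $t$ and the fixed invariants, again giving a rational parametrization by $t \in K^*$.

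That the dimension is exactly $1$ and not $0$ can also be seen intrinsically: a form~(\ref{ccform}) without repeated roots has $a_0 a_{2\ell} \neq 0$, so it carries at least two non-zero coefficients $a_i, a_j$ with $i \neq j$, and the condition for the class of $\smat{\lambda}{0}{0}{1}$ in $\mathbb{G}_m$ to stabilize $f$ modulo $\sim$ forces $\lambda^{(i-j)n} = 1$, cutting out a finite subgroup. Hence the orbit is $\mathbb{G}_m / (\text{finite}) \cong \mathbb{G}_m$, rational of dimension $1$.

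The main obstacle I anticipate is bookkeeping rather than anything conceptual: one must verify that the explicit reconstruction $u \mapsto f_u$ (respectively $t \mapsto f$) is injective, or at least birational, onto the fiber, so that it identifies the fiber with a rational curve and does not merely dominate it with multiple sheets. In the generic case this is transparent, since $a_\ell = J_1$ is held fixed while $a_{\ell-1} = u$ varies, so distinct values of $u$ yield non-proportional forms; in the non-generic case the toric nature of the reconstruction---the basis property of $\{v_i\} \cup \{w\}$---supplies the corresponding injectivity, and checking this uniformly over all admissible $S$ is the only point that genuinely requires care.
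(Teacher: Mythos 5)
Your proposal is correct and follows essentially the same route as the paper: the paper's proof is precisely that the fiber is parametrized by the monomial corresponding to the complementary basis vector $w$ from the proof of Proposition~\ref{prop:param} (which in the generic case is $a_{\ell-1}$, your parameter $u$). The additional $\mathbb{G}_m$-orbit and finite-stabilizer framing you give is a harmless supplementary verification rather than a different argument.
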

\begin{proof}
  This is clear from the proof of Proposition~\ref{prop:param}. Indeed, the
  given set is parametrized by the monomial corresponding to the complementary
  vector $w$.
\end{proof}

\begin{example}\label{ex:TEx1}
  The generic homogeneous diagonal invariants from
  Definition~\ref{def:gendiaginv} correspond to the case where $S = (2 \ell , 2
  \ell - 1 , \ldots , \ell + 2 , \ell - 1)$, so $M_S = (\ell , \ell - 1 , \ldots
  , 2 , -1)$. The resulting kernel $K_S$ has an ordered basis consisting of the
  positive elements
  \begin{equation*}
    \begin{array}{c}
      ( 1 , 0 , 0 , \ldots , 0, 0 , \ell) , \\
      ( 0 , 1 , 0 , \ldots , 0, 0 , \ell - 1) , \\
      \ldots \\
      ( 0 , 0 , 0 , \ldots , 0, 1 , 2) . \\
    \end{array}
  \end{equation*}
  corresponding to the generic invariants $J_{\ell + 1} , J_{\ell} , \ldots ,
  J_{3}$, respectively. The complementary element $(0 , 0 , \ldots , 0 , 1)$
  corresponds to $a_{\ell - 1}$, which can indeed be used to parametrize the
  corresponding rational spaces, as we have seen in the proof of
  Proposition~\ref{prop:GenTInv}.
\end{example}

\begin{example}\label{ex:TEx2}
  Let $\ell = 6$ and take $S = (12,8,3,1)$. A basis for $K_S$ in $\NN^4$ is
  given by $\{ (3,0,1,3),(3,1,0,4),(5,0,0,6) \}$, and a complementary element
  $w$ is furnished by $(1,0,0,1)$. This shows that for binary forms
  $f$ of type $(0,n,m)$ such that $$a_{10} = a_7 = a_5 = a_2 = 0$$ and $a_{8}, a_{3} , a_1 \neq
  0$, a set of $S$-homogeneous diagonal invariants is furnished by
  \begin{eqnarray*}
    J_1 &=& a_6 , \\
    J_{2,0} &=& a_{12} a_{0} , J_{2,1} = a_{11} a_{1} , J_{2,3} =
    a_9 a_3 , J_{2,4} = a_8 a_4 ,\\
    J_{7} &=& a_{12}^3 a_3 a_1^3 , \\
    J_{8} &=& a_{12}^3 a_8 a_1^4,
    \\J_{11} &=& a_{12}^5 a_1^6  .
  \end{eqnarray*}
  Moreover, we can use $w = a_{12} a_{1}$ to parametrize the corresponding
  rational spaces of binary forms with given $S$-homogeneous invariants.
\end{example}

\begin{remark}
  A uniform approach to the problem is also available, namely by constructing
  the full invariant algebra of the action of $T$ on the general binary
  form~\eqref{ccform} in Definition~\ref{def:typ}. This can be done by writing
  down the invariant monomials of given weight, adding the result to the set of
  generator if it is not an expression in the monomials already found. By a
  result of Wehlau \cite{wehlau}, this process always terminates at degree
  $m-1$.  A script to generate this invariant algebra is available
  online\footnoteref{foothyp}. For the case $m = 8$, it is generated by the
  expressions
  \begin{small}
    \begin{align*}
      \mathrm{Degree}\ 1:\ && a_4, \\[0.15cm]
      \mathrm{Degree}\ 2:\ && a_7 a_1,\ &
      a_6 a_2,\ &
      a_5 a_3,\ &
      a_8 a_0,  \\[0.15cm]
      \mathrm{Degree}\ 3:\ &&a_8 a_3 a_1,\ &a_7 a_5 a_0,& 
      a_7 a_3 a_2,\ &a_6 a_5 a_1,\ \\
      &&a_8 a_2^2,\ &a_6^2 a_0,\ &
      a_6 a_3^2,\  &a_5^2 a_2, \\[0.15cm]
      \mathrm{Degree}\ 4:\ &&a_8 a_6 a_1^2,\ &a_7^2 a_2 a_0,\ &
      a_8 a_5 a_2 a_1,\  &a_7 a_6 a_3 a_0,\   \\
      &&a_7 a_5 a_2^2,\ &a_6^2 a_3 a_1,\ &
      a_7 a_3^3,\  &a_5^3 a_1,\   \\
      &&a_8 a_3^2 a_2,\  &a_6 a_5^2 a_0,   \\[0.15cm]
      \mathrm{Degree}\ 5:\ &&a_8^2 a_2 a_1^2,\ &a_7^2 a_6 a_0^2,\ 
      &a_8 a_5^2 a_1^2,\  &a_7^2 a_3^2 a_0,\   \\
      &&a_7^2 a_2^3,\ &a_6^3 a_1^2,\ 
      &a_8 a_3^4,\  &a_5^4 a_0,   \\[0.15cm]
      \mathrm{Degree}\ 6:\ &&a_8^2 a_5 a_1^3,\  &a_7^3 a_3 a_0^2,   \\[0.15cm]
      \mathrm{Degree}\ 7:\ &&a_8^3 a_1^4,\ &a_7^4 a_0^3\,.
    \end{align*}
  \end{small}
  The non-generic invariants constructed in Proposition~\ref{prop:NonGenTInv}
  are of course expressions in these monomials.  Theoretically, this uniform
  approach is much more satisfying, but the results get unwieldy for bigger $m$;
  the number of invariants runs into the hundreds for $m\geq 12$, whereas by
  contrast, the number of homogeneous invariants constructed above is always at
  most $\ell + 1$, no matter which subset $S$ of coefficients is considered.
\end{remark}

\subsection{Homogeneous dihedral invariants}

We resume the main thread of our argument. Now that we have determined useful
small sets of invariants for the action of the normal subgroup $T \subset D$ of
index~$2$, we can construct the invariants for $D$ itself by a symmetrization.
Before starting, we need an elementary result.

\begin{lemma}\label{lem:C2Quot}
  Let $n \geq 1$, and let $X$ be the affine space with coordinates $(s_1 ,
  \ldots , s_n , t_1 , \ldots t_n)$. Define an action of the cyclic group
  $\CG_2$ on $X$ by $s_i \leftrightarrow t_i$. Consider the invariants $\{s_i +
  t_i\}_{i=1}^n \cup \{s_i t_j + s_j t_i \}_{i,j=1}^n$ of this action. Then the
  orbit under the action of $\CG_2$ of a point $x \in X$ is determined by these
  invariants.
\end{lemma}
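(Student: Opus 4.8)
The statement asserts that the invariants $\{s_i + t_i\}$ and $\{s_i t_j + s_j t_i\}$ separate the orbits of the swap action $s_i \leftrightarrow t_i$ on affine $2n$-space. I would prove this directly by showing that from the listed invariants one can recover the unordered pair of points $\{(s_1,\ldots,s_n),(t_1,\ldots,t_n)\}$. The key observation is that the swap action has orbits of size at most two: either $s_i = t_i$ for all $i$ (a fixed point) or the orbit consists of two distinct points that differ by the swap. Thus recovering the orbit amounts to recovering, for each index $i$, the unordered pair $\{s_i, t_i\}$ \emph{together with the correlation} telling us which $t_i$ goes with which $s_i$.

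\textbf{Key steps.} First I would note that for each fixed $i$, the single-index invariants $s_i + t_i$ (a sum) and $s_i t_i + s_i t_i = 2 s_i t_i$ (the diagonal case $i = j$ of the second family, up to the harmless factor $2$ since the characteristic is not $2$) are respectively the elementary symmetric functions of $s_i$ and $t_i$. Hence these two invariants determine the \emph{unordered} pair $\{s_i, t_i\}$ as the roots of the quadratic $T^2 - (s_i + t_i) T + s_i t_i$. This recovers each coordinate pair individually but not yet the matching across indices, which is precisely the information an orbit carries. Second, I would use the genuinely mixed invariants $s_i t_j + s_j t_i$ for $i \neq j$ to pin down this matching: once we fix an ordering of the pair at some reference index $i_0$ (say, declare one of its two roots to be ``$s$''), the value of $s_{i_0} t_j + s_{j} t_{i_0}$, combined with $s_j + t_j$ already known, determines which root at index $j$ is paired with the chosen root at $i_0$. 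Concretely, knowing $s_{i_0}$, $t_{i_0}$, and the sum $s_j + t_j$, the linear equation $s_{i_0} t_j + t_{i_0} s_j = (\text{known value})$ together with $s_j + t_j = (\text{known value})$ solves for $(s_j, t_j)$ as an ordered pair, unless $s_{i_0} = t_{i_0}$ makes the system degenerate.

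\textbf{Main obstacle.} The delicate point, and the step I expect to require the most care, is exactly this degeneracy: the argument of the previous paragraph fails at indices where the reference coordinate satisfies $s_{i_0} = t_{i_0}$, since then the mixed invariant gives no information to distinguish the two orderings at index $j$. I would handle this by first choosing the reference index $i_0$ to be one where $s_{i_0} \neq t_{i_0}$, if such an index exists; then the matching propagates to all other indices as above (for an index $j$ with $s_j = t_j$ there is nothing to match, and for $s_j \neq t_j$ the nonvanishing of $s_{i_0} - t_{i_0}$ makes the linear system nonsingular). If instead $s_i = t_i$ for \emph{every} $i$, then the point is a fixed point of the action, its orbit is a single point, and it is already recovered from the invariants $s_i + t_i = 2 s_i$ alone. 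This case split over whether a separating reference index exists is the crux of the proof; once it is organized, the remaining verifications are the routine linear-algebra solvability checks sketched above.
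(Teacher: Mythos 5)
Your proposal is correct and follows essentially the same route as the paper's proof: the diagonal invariants $s_i+t_i$ and $2s_it_i$ recover each unordered pair $\{s_i,t_i\}$ via a quadratic, and the mixed invariants $s_it_j+s_jt_i$ resolve the matching across indices, with the same case split at indices where $s_i=t_i$. The only difference is presentational: where you reconstruct the point by solving a nonsingular linear system (determinant $s_{i_0}-t_{i_0}$) from a chosen reference index, the paper argues by contradiction that a partial exchange would force $(s_i-t_i)(s_j-t_j)=0$ --- the same algebraic fact in dual form.
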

\begin{proof}
  Certainly the subset of invariants $\{s_i + t_i\}_{i=1}^n \cup \{2 s_i t_i \}_{i=1}^n$
  determines $x = (s_1 , \ldots , s_n , t_1 , \ldots t_n)$ up to some sequence
  of exchanges $s_i \leftrightarrow t_i$.
  We have to show that the additional invariants suffice to tell apart a
  sequence of such exchanges, except when either none or all of the $s_i$ and
  $t_i$ are exchanged. So suppose that we have two indices $i$ and $j$ where
  $s_i \neq t_i$ and $s_j \neq t_j$, and we exchange $s_i$ and $t_i$ while
  leaving the coordinates with index $j$ fixed. Then equality of the invariants
  yields $s_i t_j + s_j t_i = t_i t_j + s_j s_i$, hence $(s_i - t_i)(s_j - t_j)
  = 0$, a contradiction with our hypothesis.
\end{proof}

By using Lemma~\ref{lem:C2Quot}, we can now find small sets of homogeneous
invariants that can be used to distinguish orbits of binary forms $f$ of type
$(0,n,m)$.  First we consider the generic case. Let $J'_i$ denote the
transformation of the invariant $J_i$ under the involution $a_i \mapsto a_{m-i}$
on the coefficients, and let
\begin{small}
  \begin{align*}
    I_1 &= J_1,\\
    I_{2,0} &= J_{2,0},\ 
    I_{2,1} = J_{2,1},\ 
    \ldots,\ 
    &I_{2,\ell-1} &= J_{2,\ell-1},  \\
    I_{3,3,1} &= J_3 + J'_3,\ 
    &I_{3,3,2} &= J_3 J'_3,\  &\\
    & \vdots & \vdots \\
    I_{\ell + 1,\ell + 1,1} &= J_{\ell+1} + J'_{\ell+1},\ 
    &I_{\ell + 1,\ell + 1,2} &= J_{\ell+1} J'_{\ell+1},\  \\[0.15cm]
    I_{3,4} &= J_3 J'_4 + J'_3 J_4,\ 
    &I_{3,5} &= J_3 J'_5 + J'_3 J_5,\ 
    \ldots,\ 
    I_{3,\ell + 1} = J_3 J'_{\ell + 1} + J'_3 J_{\ell + 1}, \\[0.15cm]
    I_{4,5} &= J_4 J'_5 + J'_4 J_5,\ 
    &I_{4,6} &= J_4 J'_6 + J'_4 J_6,\ 
    \ldots,\ 
    I_{4,\ell + 1} = J_4 J'_{\ell + 1} + J'_4 J_{\ell + 1}, \\[0.15cm]
    &\vdots  \\
    I_{\ell,\ell+1} &= J_\ell J'_{\ell+1} + J'_\ell J_{\ell+1} .
  \end{align*}
\end{small}
These expressions are homogeneous invariants under the action of $D$.  Though
$D$ is not a dihedral group, we still employ the following terminology, which
was introduced in~\cite{gutsha}.

\begin{definition}\label{def:gendihinv}
  We call the symmetrized invariants $I_{\cdot}$ defined above the \emph{generic
  homogeneous dihedral invariants} (for binary forms of type $(0,n,m)$).
\end{definition}

\begin{example}\label{ex:0n4dihedral}
  For the forms $f$ of type $(0,n,4)$ considered in
  Example~\ref{ex:0n4dihedral}, the generic homogeneous diagonal invariants are
  given by $I_1 = J_1 = a_2 , I_{2,0} = J_{2,0} = a_4 a_0 , I_{2,1} = J_{2,1} =
  a_3 a_1, I_{3,3,1} = J_3 + J'_3 = a_4 a_1^2 + a_3^2 a_0$ and $I_3 = J_3 J'_3 =
  a_4 a_3^2 a_1^2 a_0$.
\end{example}

\begin{remark}\label{rem:smaller}
  As long as $J_3 \neq J'_3$ the invertible linear systems in $J_i$ and $J'_i$
  given by considering two of the invariants $I_{i,i,1} = J_i + J'_i$ and
  $I_{3,i} = J_3 J'_i + J'_3 J'_i$ are invertible. Therefore, we can usually
  even get by with a further subset of these generic homogeneous dihedral
  invariants in our calculations, namely $I_1 , I_{2,0} \ldots I_{2,\ell+1},
  I_{3,3,1} , \ldots I_{\ell+1,\ell+1,1}, I_{3,3,2}, I_{3,4} , \ldots
  I_{3,\ell+1}$. In Example~\ref{ex:nondesc1}, we take this approach.
\end{remark}
  
The symmetrization process is similarly straightforward for the $S$-homogeneous
diagonal invariants, so we can also construct homogeneous dihedral invariants in
the non-generic cases.

\begin{definition}\label{def:Sdihinv}
  Given a binary form $f$ of type $(0,n,m)$ with $m = 2 \ell$ even and any tuple
  $S$ for which $f$ is $S$-admissible, we call the symmetrization of any of the
  finite sets $R \cup \{ J_{2,0} , \ldots J_{2,\ell - 1} \}$ constructed in
  Proposition~\ref{prop:NonGenTInv} the \emph{homogeneous dihedral invariants}
  of $f$.
\end{definition}

\begin{proposition}\label{prop:GenDInv}
  Let $T = (0,n,m)$ be a type with $m = 2 \ell$ even.
  \begin{enumerate}
    \item Suppose that $f$ and $f'$ in~\eqref{ccform} of type $T$ are such that
      \begin{enumerate}[(i)]
        \item either $a_{2 \ell} , a_{2 \ell - 1} , \ldots a_{\ell+1}, a_{\ell-1}
          \neq 0$ or $a_{\ell+1} , a_{\ell-1} , \ldots a_{1}, a_{0} \neq 0$ and
        \item either $a'_{2 \ell} , a'_{2 \ell - 1} , \ldots a'_{\ell+1},
          a'_{\ell-1} \neq 0$ or $a'_{\ell+1} , a'_{\ell-1} , \ldots a'_{1}, a'_{0}
          \neq 0$.
      \end{enumerate}
      If the generic homogeneous dihedral invariants $I$ and $I'$ of $f$ and $f'$
      define the same point in the corresponding weighted projective space, then
      there exists an $A \in D$ such that $f'  \sim A . f$.
    \item For general $S$-admissible $f$ and $f'$ whose invariants define the
      same point in the corresponding weighted projective space, the same
      conclusion holds.
  \end{enumerate}
\end{proposition}
\begin{proof}
  Note that the conditions of part (i) of the proposition are
  indeed invariant under the action of $D$. Using Lemma~\ref{lem:C2Quot}, we see
  that replacing $f'$ by its transformation by $\smat{0}{1}{1}{0}$ if necessary,
  we may assume that $f$ and $f'$ have the same homogeneous diagonal invariants.
  Then the parametrization by $a_{\ell - 1}$ in Proposition~\ref{prop:GenTInv}
  allows us to conclude.

  For general forms, the argument is essentially the same, replacing the
  parametrizing element $a_{\ell - 1}$ by the monomial corresponding to $w$ in
  Proposition~\ref{prop:param}.  Note that forms in the same $D$-orbit are
  indeed $S$-admissible for the same $S$, so that the same set of homogeneous
  dihedral invariants can be used.
\end{proof}

The construction of general homogeneous dihedral invariants is perhaps best
illustrated by an example.

\begin{example}\label{ex:DEx}
  Consider the binary forms $f$ of type $(0,n,12)$ such that both $a_2
  = a_5 = a_7 = a_{10} = 0$ and either $a_{11} , a_9 , a_4 \neq 0$ or $a_8 , a_3
  , a_1 \neq 0$. The symmetrization of the invariants in Example~\ref{ex:DEx}
  yields the following $S$-homogeneous dihedral invariants for this family:
  \begin{small}
    \begin{align*}
      I_1  &=  a_6, \\
      I_{2,0} &= a_{12} a_0,\ I_{2,1} = a_{11} a_1,\ &I_{2,3} &= a_9 a_3,\
      I_{2,4}  =  a_8 a_4,\\
      I_{7,7,1} &= a_{11}^6 a_0^5 + a_{12}^5 a_1^6,\
      &I_{7,7,2}  &=  a_{12}^5 a_{11}^6 a_1^6 a_{12}^5,  \\
      I_{8,8,1} &= a_{11}^3 a_9 a_0^3 + a_{12}^3 a_3 a_1^3,\
      &I_{8,8,2}  &=  a_{12}^3 a_{11}^3 a_9 a_3 a_1^3 a_0^3,  \\
      I_{11,11,1} &= a_{11}^4 a_4 a_0^3 + a_{12}^3 a_8 a_1^4,\
      &I_{11,11,2}  &=  a_{12}^3 a_{11}^4 a_8 a_4 a_1^4 a_0^3,  \\
      I_{7,8} &= a_{12}^3 a_{11}^6 a_3 a_{1}^3 a_0^5 + a_{12}^5 a_{11}^3 a_9
      a_1^6a_0^3,\
      &I_{7,11}  &=  a_{12}^3 a_{11}^6 a_8 a_1^4 a_0^5 + a_{12}^5 a_{11}^4 a_4 a_1^6 a_0^3,\\
      I_{8,11} &= a_{12}^3 a_{11}^3 a_9 a_8 a_1^4 a_0^3 + a_{12}^3 a_{11}^4
      a_4 a_3 a_1^3 a_0^3.
    \end{align*}
  \end{small}
\end{example}

\begin{remark}\label{sec:arithm-dihedr-invar}
  The homogeneous dihedral invariants of a general binary octavic form $$f = a_8 x^{8} +
  a_{7} x^{7} z + \ldots + a_1 x z^{7} + a_0 z^{8}$$ are the following:
  \begin{displaymath}
  \begin{array}{lrl}
    \mathrm{Degree}\ 1:\ &i_1 &= { a_4},\medskip\\
    \mathrm{Degree}\ 2:\ &i_{2} &= { a_0}\,{ a_8},\ %
    j_{2} = { a_1}\,{ a_7},\ %
    k_{2} = { a_2}\,{ a_6},\ %
    l_{2,} = { a_3}\,{ a_5},\medskip\\%
    \mathrm{Degree}\ 3:\ &i_{3} &= { a_0}\,{ a_5}\,{ a_7}+{ a_1}\,{ a_3}\,{
    a_8},\ %
    j_{3} = { a_0}\,{{ a_6}}^{2}+{{a_2}}^{2}{ a_8},\ \\%
    &k_{3} &= { a_1}\,{ a_5}\,{ a_6}+{ a_2}\,{ a_3}\,{ a_7},\ %
    l_{3} = { a_2}\,{{ a_5}}^{2}+{{a_3}}^{2}{ a_6},\medskip\\%
    \mathrm{Degree}\ 4:\ &i_{4} &= { a_0}\,{{ a_5}}^{2}{ a_6}+{ a_2}\,{{
    a_3}}^{2}{ a_8},\ %
    j_{4} = { a_0}\,{ a_3}\,{ a_6}\,{ a_7}+{ a_1}\,{ a_2}\,{ a_5}\,{ a_8},\ \\%
    &k_{4} &= { a_0}\,{ a_2}\,{{ a_7}}^{2}+{{ a_1}}^{2}{ a_6}\,{ a_8},\ %
    l_{4} = { a_1}\,{{ a_5}}^{3}+{{ a_3}}^{3}{ a_7},\ \\%
    &m_{4} &= { a_1}\,{ a_3}\,{{ a_6}}^{2}+{{ a_2}}^{2}{ a_5}\,{ a_7},\medskip\\%
    \mathrm{Degree}\ 5:\ &i_{5} &= {{ a_0}}^{2}{ a_6}\,{{ a_7}}^{2}+{{
    a_1}}^{2}{ a_2}\,{{ a_8}}^{2},\ %
    j_{5} = { a_0}\,{{ a_5}}^{4}+{{ a_3}}^{4}{ a_8},\ \\%
    &k_{5} &= { a_0}\,{{ a_3}}^{2}{{ a_7}}^{2}+{{ a_1}}^{2}{{ a_5}}^{2}{ a_8},\ %
    l_{5} = {{ a_1}}^{2}{{ a_6}}^{3}+{{ a_2}}^{3}{{ a_7}}^{2},\medskip\\%
    \mathrm{Degree}\ 6:\ &i_{6} &= {{ a_0}}^{2}{ a_3}\,{{ a_7}}^{3}+{{
    a_1}}^{3}{ a_5}\,{{ a_8}}^{2},\medskip\\%
    \mathrm{Degree}\ 7:\ &i_{7} &= {{ a_0}}^{3}{{ a_7}}^{4}+{{ a_1}}^{4}{{
    a_8}}^{3}.
    \end{array}
  \end{displaymath}
  Since there is an inclusion of invariant rings $$k[a_0, a_1, \ldots,
  a_8]^{\SL_2 (K)} \subset k[a_0, a_1, \ldots, a_8]^{D \cap \SL_2 (K)},$$ the
  Shioda invariants~\cite{shioda67} can be expressed as polynomials in the
  generic homogeneous dihedral invariants. For example, the degree 2 Shioda
  invariant can be written as
  \begin{equation*}
    {\frac {1}{70}}\,{{ i_1}}^{2}+2\,{ i_2}-\frac{1}{4}\,{ j_2}+\frac{1}{14}\,{
    k_2}-\frac{1}{28}\,{ l_2},
  \end{equation*}
  whereas the degree 3 invariant equals
  {\small $$
    {\frac {9}{34300}}\,{{ i_1}}^{3}+{\frac {3}{35}}\,{ i_1}\,{ i_2}
    +{\frac {9}{560}}\,{ i_1}\,{ j_2}-{\frac {33}{13720}}\,{ i_1}\,{
       k_2}-{\frac {27}{27440}}\,{ i_1}\,{ l_2}-{\frac {3}{56}}\,{ 
      i_3}+{\frac {9}{392}}\,{ j_3}-{\frac {3}{784}}\,{ k_3}+{\frac {9}{
        5488}}\,{ l_3} .
  $$}
  These formulas, as well as formulas expressing the dihedral invariants in
  terms of the Shioda invariants, are available online~\footnoteref{foothyp}.
\end{remark}

\subsection{Homogeneous dihedral invariants in the remaining
cases}\label{sec:other}

We now discuss the invariants that have to be used in the remaining cases.
First we treat
binary forms of type $(0,n,m)$ for odd $m$ (recall that in the previous
subsections we assumed $m$ to be even). Only small modifications are needed; the
generic homogeneous
diagonal invariants are given by
\begin{displaymath}
  \begin{array}{rcl}
    J_{2,0} & = & a_{2\ell-1} a_0 , \\
    J_{2,1} & = & a_{2\ell-2} a_1,\ 
    \ldots,\ 
    J_{2,\ell-1} = a_{\ell} a_{\ell-1} ,  \\
    J_4 & = & a_{\ell+1} a_{\ell-1}^3  \\
    & \vdots &  \\
    J_{2 \ell } & = & a_{2\ell-1} a_{\ell-1}^{2 \ell - 1}  .
  \end{array}
\end{displaymath}
These invariants suffice as long as $a_{2\ell - 1}$, $a_{2\ell-2}$, \ldots,
$a_{\ell+1}$, $a_{\ell-1}$ $\neq 0$. Symmetrizing with respect to the involution
$a_i \leftrightarrow a_{m - i}$, one again obtains the generic homogeneous
dihedral invariants for odd $m$. Homogeneous invariants for the non-generic
cases can be also constructed as for even $m$ as well; the only difference is
that the matrix $M_S$ is now given by $(2(s_1 - \ell)+1 , \ldots , 2(s_{r} -
\ell) + 1)$.

The homogeneous dihedral (and diagonal) invariants for binary forms of type
$(2,n,m)$ are exactly the same as expressions in the $a_i$ as for those of type
$(0,n,m)$. Finally, for the binary forms of type $(1,n,m)$, such, we only need
to consider the action of $T$ when constructing our invariants in light of the
second part of Proposition~\ref{prop:NormD}. But we know that the (identical)
homogeneous diagonal invariants considered for the types $(0,n,m)$ and $(2,n,m)$
already suffice to distinguish the orbits under this group. So we also know how
to construct a finite (and small) set of invariants for these curves.

\section{Explicit obstruction and descent}\label{sec:descent}

In this section, we will use the homogeneous dihedral invariants from Section
\ref{sec:dihedral} to obtain an explicit arithmetic description of the descent
obstruction for hyperelliptic curves with tamely cyclic reduced automorphism
group. If this obstruction vanishes, then we also indicate how an explicit
descent can be obtained. To phrase our results in a concise way, we first define
the \emph{type} of a hyperelliptic curve with tamely cyclic reduced automorphism
group.

\begin{definition}\label{def:type}
  Let $\Xm$ be a hyperelliptic curve over $K$. If $\Xm$ is isomorphic to a
  hyperelliptic curve associated with a binary form of type $(i,n,m)$ over $K$
  (as in Definition~\ref{def:typ}), then $\Xm$ will be said to be of
  \emph{type~$(i,n,m)$}. 
\end{definition}

\begin{remark}\label{rem:0nm}
  Let $\Xm$ be a hyperelliptic curve of type $(i,n,m)$, Then $i$ equals the
  number of Weierstrass points fixed by $G = \Aut (\Xm)$. The quantity $n$
  equals the cardinality of the reduced automorphism group of $\Xm$, since as
  in~\cite[Sec.1.2]{lr2012}, one can use the fact that the hyperelliptic
  involution is central to prove that the group $\overline{G}$ is canonically
  isomorphic with $\Aut (f)$.  Finally, if $n > 1$, then $m$ equals the
  cardinality of the divisor of branch points of $\Xm \to \Xm / G$ of order $2$.
  Conversely, any binary form of type $(i,n,m)$ determines a hyperelliptic curve
  with these geometric properties.

  Note that the genus $g$ of a hyperelliptic curve $\Xm$ of type $(i,n,m)$ is
  determined by the equality $2 g + 2 = m n + i$.
\end{remark}

In what follows, we let $\Xm$ denote a hyperelliptic curve over $K$ of type
$(i,n,m)$ whose field of moduli for the extension $K | k$ equals $k$.  In
Theorem~\ref{thm:MainTh1}, we have proved that the existence of a descent
implies the existence of a hyperelliptic descent except possibly if both $n$ and
$g$ are odd.  We now accordingly divide the issue of explicit descent into three
cases.
\begin{enumerate}[(i)]
  \item In the case where $n > 1$, Section~\ref{sec:hypdescobs} shows how to
    express the hyperelliptic descent obstruction in terms of the homogeneous
    dihedral invariants. Moreover, we discuss in Section~\ref{sec:exphypdesc}
    how to calculate a hyperelliptic descent explicitly if this obstruction
    vanishes.
  \item In the case where $n$ and $g$ are both odd,
    Section~\ref{sec:nonhypdescent} shows that the curve always descends,
    though perhaps not hyperelliptically. Moreover, we discuss how to calculate
    such a descent explicitly.
  \item In the case where $n = 1$, Section~\ref{sec:trivial} gives a
    generic method to calculate the (hyperelliptic) descent
    obstruction, and a corresponding descent if this obstruction vanishes. Its
    main approach is based on the covariant method developed in~\cite{lrs}. 
\end{enumerate}

To conclude these considerations, we show in Section~\ref{sec:counter}
how essentially all counterexamples to (hyperelliptic) descent can be
constructed.

\subsection{Explicit hyperelliptic descent obstruction}\label{sec:hypdescobs}


In what follows, we will let $f$ be a binary form of type $(i,n,m)$ with $n >
1$. We denote homogeneous diagonal (resp.\ dihedral) invariants of $f$ by $J(f)$
(resp.\ $I(f)$).  We will often consider these tuples $J(f)$, $I(f)$ invariants
as points in the corresponding weighted projective spaces. As
in~\cite[Sec.1.3]{lr2012}, one can associate a unique representative with such a
point $p$, which we shall here call a \emph{normalized representative}. This
normalized representative is a tuple of coordinates that represents $p$ whose
entries are defined over the same field as the point $p$ when the latter is
considered as an element of a weighted projective space.

\begin{example}\label{ex:WPSNorm}
  Let $p = (3 : 6 \sqrt{3})$, considered as a point in the weighted projective
  $(2,3)$-space. Then $p$ is defined over $\QQ$, since its conjugate $(3 : -6
  \sqrt{3})$ can be obtained from $p$ by multiplying with the scalar $-1$.
  While the tuple $(3,6 \sqrt{3})$ representing $p$ is not defined over $\QQ$,
  its normalized representative from~\cite[Sec.1.3]{lr2012} is; this
  representative is given by $(\frac{1}{4} , \frac{1}{4})$.
\end{example}

Conversely, note that once a curve $\Xm$ over $K$ with tamely cyclic reduced
automorphism group is given explicitly, it is possible to quickly determine a
binary form of the corresponding type (as in~\eqref{ccform}),~\eqref{cform1}
or~\eqref{cform2}) that defines $\Xm$ over $K$ by using the methods
from~\cite[Sec.2]{lrs}. Indeed, using the methods in \emph{loc.\ cit.}\ one
diagonalizes the cyclic reduced automorphism group $\CG_n$ of $\Xm$ into our
standard embedding of the group $\CG_n$.

%

\begin{proposition}\label{prop:f0}
  \begin{enumerate}[(i)]
    \item The normalized representative of the homogeneous dihedral invariants
      $I(f)$ of $f$ is defined over $k$.
    \item The normalized representative of the homogeneous diagonal invariants
      $J(f)$ of $f$ is defined over a quadratic extension $L = k ( \sqrt{d} )$
      of $k$.
    \item The binary form $f$ is isomorphic over $K$ to a binary form $f_L$ of
      the same type that is defined over $L$.
  \end{enumerate}
\end{proposition}
\begin{proof}
  (i) By Proposition~\ref{prop:NormD} the homogeneous dihedral invariants of $f$
  and its conjugates all define the same point in the corresponding weighted
  projective space, since by construction these invariants transform by suitable
  powers of a scalar under the action of $D$. It therefore suffices to invoke
  the uniqueness of the canonical representative from~\cite[Sec.1.4]{lr2012}.


  (ii) By Lemma~\ref{lem:C2Quot}, given a tuple of homogeneous dihedral
  invariants, there are at most $2$ tuples of homogeneous diagonal invariants of
  which these can be the symmetrization. As such, the Galois group fixing these
  tuples defines an at worst quadratic extension of $k$.

  (iii) One uses the rational parametrization in Corollary~\ref{cor:param}.
\end{proof}

\begin{definition}\label{def:invext}
  We call the field extension $L$ of $k$ in Proposition~\ref{prop:f0} the
  \emph{invariant extension} defined by $f$.
\end{definition}

\begin{corollary}\label{cor:descL}
  The curve $\Xm$ defined by $f$ descends to the at most quadratic invariant
  extension $L$ of $k$.
\end{corollary}

\begin{proposition}\label{prop:descL}
  Generically, the invariant extension $L$ is given by $k(\sqrt{d})$, where $d =
  I_{3,3,1}^2 - 4 I_{3,3,2}$ if $m$ is even and $d = I_{4,4,1}^2 - 4 I_{4,4,2}$
  if $m$ is odd.
\end{proposition}
\begin{proof}
  This follows because when $m$ is even (resp.\ odd) the field extension $L | k$
  is already incurred when reconstructing the first pair of non-dihedral
  diagonal invariants $J_3 , J'_3$ from $I_{3,3,1} , I_{3,3,2}$ (resp.\ $J_4 ,
  J'_4$ from $I_{4,4,1} , I_{4,4,2}$).
\end{proof}

We now consider some examples in order to get an idea of what the extension $L |
k$ looks like.

\begin{example}
  Consider the binary forms
  $$f = a_4 x^8 + a_3 x^6 z^2 + a_2 x^4 z^4 + a_1 x^2 z^6 + a_0$$
  of type $(0,2,4)$. Then the generic homogeneous dihedral invariants are
  symmetrizations of the generic diagonal invariants $J_1 = a_2$, $J_{2,0} = a_4
  a_0$, $J_{2,1} = a_3 a_1$ and $J_3 = a_4 a_1^2$.

  In this case the only new dihedral invariants obtained by symmetrizing the
  diagonal invariants are $I_{3,3,1} = J_3 + J_3'$ and $I_{3,3,2} = J_3 J'_3$. For the
  generic forms $f$ in Proposition~\ref{prop:GenTInv} of type $(0,2,4)$, the
  quadratic extension $L$ is therefore \emph{always} the one incurred by passing
  from $J_3 + J_3'$ and $J_3 J'_3$ to $J_3 , J'_3$. As we have seen, this means
  that $L = k(\sqrt{d})$, where $d = I_{3,3,1}^2 - 4
  I_{3,3,2}$.
\end{example}

\begin{example}
  Consider the binary forms $$f = a_5 x^{10} + a_4 x^8 z^2 + a_3 x^6 z^4 + a_2
  x^4 z^6 + a_1 x^2 z^8 + a_0 z^{10}$$ of type $(0,2,5)$. Then the generic
  homogeneous dihedral invariants are symmetrizations of $J_{2,0} = a_6 a_0$,
  $J_{2,1} = a_5 a_1$, $J_{2,2} = a_4 a_2$, $J_4 = a_5 a_2^3$ and $J_6 = a_6
  a_2^5$.
  
  This time the quadratic extension $L$ is a bit more complicated to determine.
  Indeed, we get two pairs of new dihedral invariants, namely $I_{4,4,1},
  I_{4,4,2}$ and $I_{6,6,1}, I_{6,6,2}$. Generically, the extension $L$ is
  already incurred by passing from $I_{4,4,1}, I_{4,4,2}$ to $J_4, J'_4$, which
  gives $L = k (\sqrt{d})$ where $d = I_{4,4,1}^2 - 4 I_{4,4,2}$. But it is
  possible that this does not give an extension of the ground field, while
  passing from $I_{6,6,1}, I_{6,6,2}$ to $J_6, J'_6$ does. In the latter case we
  have $L = k (\sqrt{d})$ with $d = I_{6,6,1}^2 - 4 I_{6,6,2}$ instead.
\end{example}

Now let $L$ be the invariant extension defined by $f$, and let $f_L$ be the
partial descent from Proposition~\ref{prop:f0}. We may suppose that $\Xm$
defined by $f_L$. As at the beginning of Section~\ref{sec:hypdescrit}, the
isomorphisms between $\Xm$ and its conjugates induce a canonical descent datum
on the quotient $\Bm = \Xm / \Aut (\Xm)$, which yields a model $\Bm_0$ of $\Bm$
over $k$. Now Theorem~\ref{thm:MainTh2} shows that $\Xm$ descends
hyperelliptically if and only if $\Bm_0$ has a $k$-rational point.

To study the twist $\Bm_0$, we construct the corresponding Weil cocycle $c$. Let
$\sigma$ be the generator $\Gal (L | k)$.  By our running hypotheses,
$f_L^{\sigma}$ has the same homogeneous dihedral invariants as $f_L$. Let $S$ be
the matrix $\smat{0}{1}{1}{0}$. Then either
\begin{equation}\label{eqtranss0}
  f_L^{\sigma} \sim D . f_L
\end{equation}
or
\begin{equation}\label{eqtranss}
  f_L^{\sigma} \sim D S . f_L
\end{equation}
for some $D = \smat{\lambda}{0}{0}{\mu} \in T$. Note that by
Proposition~\ref{prop:NormD}, the latter case does not occur if $f_L$ has type
$(1,n,m)$. Regardless, we now either have
\begin{small}
  \begin{equation}\label{cctranss0}
    (a^{\sigma}_{m} , a^{\sigma}_{m - 1} , \ldots , a^{\sigma}_1 ,
    a^{\sigma}_0) \longmapsto (\lambda^{m n} a_{m} , \lambda^{(m - 1) n} \mu^n
    a_{m - 1} , \ldots , \lambda^n \mu^{(m - 1) n} a_1 , \mu^{m n} a_0 )
  \end{equation}
\end{small}
or
\begin{small}
  \begin{equation}\label{cctranss}
    (a^{\sigma}_{m} , a^{\sigma}_{m - 1} , \ldots , a^{\sigma}_1 ,
    a^{\sigma}_0) \longmapsto (\lambda^{m n} a_{0} , \lambda^{(m - 1) n} \mu^n
    a_{1} , \ldots , \lambda^n \mu^{(m - 1) n} a_{m-1} , \mu^{m n} a_{m} ) .
  \end{equation}
\end{small}
depending on whether~\eqref{eqtranss0} or~\eqref{eqtranss} holds.


\begin{lemma}\label{lem:trivcocyc}
  Choose isomorphisms $g_{\sigma} : \Xm \to \Xm^{\sigma}$ for all $\sigma \in
  \Gamma$. Then the induced Weil cocycle $c$ on $\Bm$ given by $\sigma \mapsto
  h_{\sigma} : \Bm \to \Bm^{\sigma}$ is trivial on the index $2$ subgroup of
  $\Gamma$ that fixes the invariant extension $L$ of $k$.
\end{lemma}
\begin{proof}
  Indeed, since we divide out by the automorphisms of $\Xm$, the induced maps
  $h_{\sigma}$ are independent of the choice of the $g_{\sigma}$. Since $\Xm$ is
  defined over $L$, we may just take $g_{\sigma}$ to be the identity if
  $\sigma$ fixed $L$. The result follows.
\end{proof}

Using the inflation-restriction exact sequence, Lemma~\ref{lem:trivcocyc}
implies that $c \in H^1 (\Gal (K | k) , \PGL_2 (K))$ is the inflation of a Weil
cocycle $c_L \in H^1 (\Gal (L | k) , \PGL_2 (L))$. In the case
\eqref{eqtranss0}, the cocycle $c_L$ is given by
\begin{equation}\label{cocyc1}
  \sigma \longmapsto \smat{\lambda^n}{0}{0}{\mu^n}
\end{equation}
and in the case \eqref{eqtranss} by
\begin{equation}\label{cocyc2}
  \sigma \longmapsto \smat{0}{\mu^n}{\lambda^n}{0}\,.
\end{equation}

Suppose that $c_L$ is given by~\eqref{cocyc1}. Then by dividing by the scalar
$\lambda^n$, we can normalize $c_L$ to
\begin{equation}\label{cocyc3}
  \sigma \longmapsto \smat{1}{0}{0}{r}\,. 
\end{equation}
The Weil cocycle condition translates into the equality $r^{\sigma} r = 1$, so
by Hilbert 90 the cocycle~\eqref{cocyc3} is a coboundary. More precisely, the
descent morphism is then given by a diagonal matrix, so there exists a
hyperelliptic descent defined by a binary form $f_k$ of the same type as $f_L$.
But in that case the normalized representative of $I(f_L)$ would be defined over
$k$ already; so $L$ was already the trivial extension of $k$. Since we are
always in the case~\eqref{cocyc1} if $f_L$ is of type $(1,n,m)$, we get the
following result.

\begin{lemma}\label{lem:1nm}
  If $\Xm$ is of type $(1,n,m)$, then $\Xm$ can be defined over $k$ by a binary
  form $f$ of the given type.
\end{lemma}


In the second case that $c_L$ is given by~\eqref{cocyc2}, let $r = \mu^n /
\lambda^n$. Now $c_L$ normalizes to
\begin{equation}\label{cocyc4}
  \sigma \longmapsto \smat{0}{r}{1}{0}\,. 
\end{equation}
The fact that~\eqref{cocyc2} indeed defines a cocycle shows that $r^{\sigma} =
r$, so $r \in k$.

\begin{definition}\label{def:normobs}
  We call the image of $r$ in the quotient group $k^* / \Nm_{L | k} (L^*)$ the
  \emph{norm obstruction} for $\Xm$.
\end{definition}

\begin{lemma}\label{lem:rgen}
  Let $f_L$ be a form of type $(0,n,m)$ or $(2,n,m)$.
  \begin{enumerate}[(i)]
    \item If $m = 2 \ell$ is even, then if we suppose additionally that $f_L$ is
      generic, then the norm obstruction for $\Xm$ is trivial if and
      only if the generic homogeneous dihedral invariant $I_{2,\ell - 1}(f)$ is
      a norm from $L$.
    \item If $m$ is odd, then the norm obstruction is always trivial.
  \end{enumerate}
\end{lemma}
\begin{proof}
  (i) If $a_{\ell-1}$, $a_{\ell}$ and $a_{\ell+1}$ are all nonzero, the
  transformation formula~\eqref{cctranss} shows that we have
  \begin{equation*}
    r = (a_{\ell}^{\sigma} a_{\ell-1}) / (a_{\ell+1}^{\sigma} a_\ell) =
    a_{\ell-1} / a_{\ell+1}^{\sigma}
  \end{equation*}
  (note that $a_{\ell} = a_{\ell}^{\sigma}$ by the $k$-rationality of the
  homogeneous dihedral invariants). The demand that this be a norm is satisfied
  if and only if $a_{\ell + 1} a_{\ell - 1} = I_{2 , \ell-1}$ is a norm.

  (ii) Let $m = 2 \ell - 1$ be odd. We first suppose that $a_{\ell}$ and
  $a_{\ell + 1}$ are nonzero. Then $r = (a^{\sigma}_{\ell+1} a_{\ell+1}) /
  (a^{\sigma}_{\ell} a_{\ell})$ is a norm. In the general case, the same
  argument shows that $r^{2 i -1}$ is a norm for all $i$ such that $a_{\ell+i}$
  (and hence $a_{\ell+1-i}$, since we are in case~\eqref{cocyc2}) is nonzero.
  The set of exponents of $r$ thus obtained has greatest common divisor equal to
  one, since one observes that otherwise the binary form $f$ that we started with
  would have more automorphisms than $\CG_n$ and would therefore not be of the
  given type.
\end{proof}

We can now prove our main theorem of this section.

\begin{theorem}\label{thm:MainTh3}
  Let $\Xm$ denote a hyperelliptic curve over $K$ of genus $g$ and type
  $(i,n,m)$ with $n > 1$ whose field of moduli for the extension $K | k$ equals
  $k$, represented by a binary form $f$ over $K$ of the given type. Let $L | k$
  be the invariant extension defined by $f$. Then the hyperelliptic descent
  obstruction is as follows, depending on the type of $\Xm$.
  \begin{itemize}
    \item If $\Xm$ is of type $(0,n,m)$ or $(2,n,m)$, then a hyperelliptic
      descent always exists if $m$ is odd. If $m$ is even, then $\Xm$ descends
      hyperelliptically if and only if its norm obstruction is trivial. In
      either of the two cases, $\Xm$ always admits a hyperelliptic model of the
      given type over the at most quadratic extension $L$ of $k$.
    \item If $\Xm$ is of type $(1,n,m)$, then a hyperelliptic descent always
      exists. Moreover, this descent can be defined by a hyperelliptic model of
      the given type over $k$.
  \end{itemize}
\end{theorem}
\begin{proof}
  By Proposition~\ref{prop:f0}(ii), we can always construct a hyperelliptic
  model of $\Xm$ over the quadratic extension $L$ of $k$. By
  Lemma~\ref{lem:1nm}, this extension $L$ in fact coincides with $k$ if $f$ is
  of type $(1,n,m)$, which proves the theorem for this case.

  It remains to see when the descent obstruction to $k$ vanishes in the other
  cases. By Theorem~\ref{thm:MainTh2}, this is the case if and only if the
  canonical descent $\Bm_0$ of $\Bm = \Xm / \Aut (\Xm)$ admits a point over $k$.
  The twist $\Bm_0$ of the projective line $\Bm$ is determined by the cocycle
  $\sigma$ in~\eqref{cocyc4}. Proposition~\ref{prop:WeilTest} now shows that
  $\Bm_0$ is isomorphic to $\PP^1$ if and only if the norm obstruction for $\Xm$
  vanishes.  It now suffices to invoke Lemma~\ref{lem:rgen}(ii) to show that the
  extension $L | k$ is always trivial if $m$ is odd.
\end{proof}


\begin{remark}\label{rem:sig}
  The existence of a descent can sometimes also be proved by
  using~\cite{brandt-stich} and a signature argument as
  in~\cite[Prop.4.3]{lr2012} to show that $\Bm_0$ has a $k$-rational point. That
  there exist a hyperelliptic descent then follows from
  Theorem~\ref{thm:MainTh2}. However, our explicit construction of $f_0$ in the
  following section uses the homogeneous diagonal invariants and the
  parametrization from Corollary~\ref{cor:param} in an essential way.
\end{remark}

\subsection{Explicit hyperelliptic descent}\label{sec:exphypdesc}

We will now show how to construct a descent of $\Xm$ to $k$ if the obstruction
in Theorem~\ref{thm:MainTh3} vanishes. For this, we first prove the following
proposition.

\begin{proposition}\label{prop:cycP1}
  Let $\dD_0$ be a $k$-rational effective divisor of degree $2$ on $\PP^1$. Then
  for every $n > 1$ prime to the characteristic of $k$ there exists a tamely
  cyclic cover $\PP^1 \to \PP^1$ of degree $n$ that is defined over $k$ and
  whose branch divisor has support in $\dD_0$.
\end{proposition}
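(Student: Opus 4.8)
The plan is to treat $\dD$ according to the splitting of its support, which (being the branch divisor of a cover, it must be reduced, i.e. supported on two distinct geometric points) consists either of two $k$-rational points or of a conjugate pair defined over a quadratic extension $L = k(\sqrt{d})$ of $k$. In the first case the construction is immediate: since both points are $k$-rational, an automorphism of $\PP^1_k$ moves them to $0$ and $\infty$, and the map $t \mapsto t^n$ is then a tamely cyclic cover of degree $n$ over $k$ branched exactly in $\dD$. So from now on I would assume that the support of $\dD$ is a conjugate pair $\{\alpha , \overline{\alpha}\}$ with $\alpha \in L \setminus k$ and $\overline{\alpha} = \alpha^{\sigma}$, where $\sigma$ generates $\Gal (L | k)$. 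In particular neither point is $k$-rational, so $\infty$ is not in the support of $\dD$.

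Over $L$, I would write down the standard cyclic cover
\[ C : \quad y^n = w, \qquad w = \frac{x - \alpha}{x - \overline{\alpha}} . \]
The rational function $w$ has a simple zero at $\alpha$ and a simple pole at $\overline{\alpha}$, and takes a finite nonzero value at every other point of $\PP^1_x$ (in particular $w (\infty) = 1$). Hence the projection $(x,y) \mapsto x$ is totally ramified above $\alpha$ and $\overline{\alpha}$ and unramified elsewhere, so $C$ is geometrically irreducible, the cover is cyclic of degree $n$, and its branch divisor is exactly $\dD$.

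It then remains to descend the cover $C \to \PP^1_x$ to $k$. Since $w^{\sigma} = (x - \overline{\alpha})/(x - \alpha) = 1/w$, the base-preserving map $\psi : (x,y) \mapsto (x , 1/y)$ is an isomorphism $C \to C^{\sigma}$ over $\PP^1_x$; as $\psi$ has coefficients in $k$, one checks at once that $\psi^{\sigma} \circ \psi = \Id_C$, so that $\psi$ is an effective Weil descent datum for the quadratic extension $L | k$. By Weil descent this produces a curve $C_0$ over $k$ together with a degree $n$ morphism $C_0 \to \PP^1_k$ whose branch divisor is the $k$-rational divisor $\dD$. The geometric deck group $\langle \zeta_n \rangle$ acting by $y \mapsto \zeta_n y$ is normalized by $\psi$ (conjugation by $\psi$ acts as inversion on it), so this $\ZZ / n \ZZ$-action is stable under the descent datum and $C_0 \to \PP^1_k$ is a cyclic cover defined over $k$.

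The one genuine subtlety, and the step I expect to be the main obstacle, is that \emph{a priori} $C_0$ is only a genus $0$ curve over $k$, and one must rule out that it is a pointless conic rather than $\PP^1_k$. Here I would exhibit an explicit rational point: the point of $C$ lying over $x = \infty$ with $y = 1$ makes sense because $w (\infty) = 1$, and it is fixed by the semilinear involution $\Theta = \psi \circ \sigma$ (indeed $\Theta$ fixes $x = \infty$ and sends $y = 1$ to $1 / 1^{\sigma} = 1$). Thus it descends to a $k$-rational point of $C_0$, and a genus $0$ curve with a rational point is isomorphic to $\PP^1_k$. This yields the desired tamely cyclic cover $\PP^1_k \to \PP^1_k$ of degree $n$ over $k$ branched in $\dD$.
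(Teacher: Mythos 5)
Your proof is correct, but it takes a genuinely different route from the paper's. The paper treats the conjugate-pair case by a direct, fully explicit construction: after normalizing the two points to $\pm\sqrt{d}$ (possible by a M\"obius transformation over $k$), it expands $(x + \sqrt{d}\,z)^n = p + q\sqrt{d}$ with $p, q \in k[x,z]$ and takes $(x:z) \mapsto (p:q)$ as the cover; the verification is elementary (coprimality of $p$ and $q$, an $n$-fold root at $-\sqrt{d}$, and Riemann--Hurwitz), and no descent machinery is invoked --- geometrically this map is just $t \mapsto t^n$ conjugated by the M\"obius map sending $\pm\sqrt{d}$ to $0, \infty$, with the conjugated map visibly $k$-rational. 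You instead build the cyclic cover $y^n = (x-\alpha)/(x-\overline{\alpha})$ over $L$, descend it to $k$ via Weil's theorem using the cocycle $(x,y) \mapsto (x, 1/y)$, and then --- this is the extra step your approach makes necessary and the paper's avoids entirely --- rule out that the descended genus-$0$ source is a pointless conic by exhibiting the fixed point $(\infty,1)$ of the twisted action; you correctly identify this as the crux, and your verification of it is right. Both arguments are sound: yours is more conceptual, needs no normalization of the branch points, and generalizes easily, while the paper's buys explicitness --- the cover is given by the polynomials $(p:q)$, which is exactly what the paper uses afterwards (the hyperelliptic descent is written as $y^2 = s_0(p,q)$), whereas extracting an explicit cover from your $C_0$ would require a further computation, e.g.\ parametrizing $C_0$ from its rational point.
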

\begin{proof}
  The case where $\dD_0 = [p_1] + [p_2]$ with $p_1, p_2 \in k$ is trivial. In the
  case where $p_1$ and $p_2$ are Galois conjugate, we can change coordinates in
  $\PP^1$ to suppose that $\dD_0 = [\sqrt{d}] + [-\sqrt{d}]$, where $d$ is
  non-square in $k$. In that case, consider the expansion of the expression $(x
  + \sqrt{d}z)^n$ as $p + q \sqrt{d}$, with $p,q \in k[x,z]$. Then we claim that
  we can take $(x:z) \mapsto (p : q)$ as our cover.

  To see this, first note that $p$ and $q$ do not contain a common factor.
  Indeed, this would be a factor of $(x + \sqrt{d} z)^n$ as well, hence it would
  equal $(x + \sqrt{d} z)$. But because $p$ and $q$ are defined over $k$, they
  would then both be divisible by $(x^2 - d z^2)$. Hence the same would be true
  for $(x + \sqrt{d} z)^n$, which is absurd. So $(p : q)$ does indeed define a
  degree $n$ cover of $\PP^1$ over $k$.

  To see that $(p : q)$ is (tamely) cyclic, note that by construction,
  the equation $p(t,1)/q(t,1) = - \sqrt{d}$ has $t = -\sqrt{d}$ as an $n$-fold
  solution.  Therefore $(-\sqrt{d} : 1)$ is in the branch locus of $(p : q)$,
  and hence $\sqrt{d}$ as well since $(p : q)$ is defined over $k$. The
  Riemann-Hurwitz formula excludes the possibility of other points occurring in
  the branch locus of $(p : q)$, which is therefore indeed given by $\dD_0$.
\end{proof}

We consider the first case of Theorem~\ref{thm:MainTh3}. In order to descend
effectively, we first construct some special divisors on the canonical model
$\Bm_0$ of $\Bm = X / \Aut (X)$.

We let $\rR$ be the support of the branch divisor of the quotient map $\pi : \Xm
\to \Bm$. Given $\sigma \in \Gal (K | k)$, the divisor $\rR$ is mapped to its
conjugate $\rR^{\sigma}$ under the well-determined isomorphisms $\Bm \to
\Bm^{\sigma}$ induced by a choice of isomorphism $\Xm \to \Xm^{\sigma}$.  We let
$\rR_0$ be the image of $\rR$ under the canonical descent morphism $\varphi : 
\Bm \to \Bm_0$.

The branch divisor $\rR$ naturally admits a decomposition $\rR = \sS + \tT$ into
effective subdivisors, $\sS$ and $\tT$. Here $\tT$ is the branch divisor of the
tamely cyclic cover $q : \Qm = \Xm / \iota \to \Bm$, and $\sS$ is contained in
the image of the branch divisor of the quotient morphism $\pi_{\iota} : \Xm \to
\Qm$ under $q$. We let $\sS_0$ (resp.\ $\tT_0$) be the image of $\sS$ (resp.\
$\tT$) under $\varphi$.

We summarize the situation, as well as indicating some additional divisors which
we will obtain later in our argument, in the diagram below.
\begin{equation*}
  \xymatrix{
    & \dD \ar@{}[d]|-*[@]{\subset} & \rR = \sS + \tT \ar@{}[d]|-*[@]{\subset} \\
      \Xm \ar[r]^{\pi_{\iota}} & \Qm = \Xm / \iota \ar[r]^{q} \ar[d]
    & \Bm = \Xm / \Aut (\Xm) \ar[d] \\
    & \Qm_0 \ar[r]^{q_0} & \Bm_0 \\
    & \dD_0 \ar@{}[u]|-*[@]{\subset} & \rR_0 = \sS_0 + \tT_0
      \ar@{}[u]|-*[@]{\subset}
  }
\end{equation*}

\begin{proposition}
  \begin{enumerate}[(i)]
    \item The divisors $\rR_0$, $\sS_0$ and $\tT_0$ are defined over $k$.
    \item The support of $\tT_0$ is of degree $2$.
  \end{enumerate}
\end{proposition}
\begin{proof}
  (i) This follows because the action of an element $\sigma \in \Gamma$
  transforms the branch divisor of $\pi : \Xm \to \Bm$ (resp. $q : \Xm /
  \iota_{\Xm} \to \Bm$) into the branch divisor of $\pi^{\sigma} : \Xm^{\sigma}
  \to \Bm^{\sigma}$ (resp.\  $q^{\sigma} : \Xm^{\sigma} / \iota_{\Xm}^{\sigma}
  \to \Bm^{\sigma}$). Note that for $\Xm / \iota \to \Bm$ this uses the fact
  that the involution $\iota$ is canonical to obtain the equality
  $\iota_{\Xm}^{\sigma} = \iota_{\Xm^{\sigma}}$.

  (ii) Taking a normal form~\eqref{ccform}-\eqref{cform2} over the algebraic
  closure $K$ transforms the quotient map $q$ into the map $(x:z) \mapsto (x^n
  ,z^n)$, for which $\tT$ becomes the divisor $(n-1) [0] + (n-1) [\infty]$.
\end{proof}

We first consider the curves $\Xm$ defined by a form $f$ of type $(0,n,m)$ or
$(2,n,m)$. The quotient $\Bm$ has natural coordinates $(s:t) = (x^n : z^n)$, in
terms of which $\tT$ is given by the zero locus of $a_m s^m + a_{m-1} s^{m-1} t
+ \ldots + a_1 s t^{m-1} + a_0 t^m$. If the hyperelliptic descent obstruction
vanishes, then $\Bm_0$ is isomorphic with $\PP^1$ over $k$, and we can apply the
explicit matrix $N$ from the proof of Proposition~\ref{prop:WeilTest} to $\tT$
to get the $k$-rational divisor $\tT_0$ on $\Bm_0 \cong \PP^1$. The divisor $\sS$,
which corresponds to $(n - 1)[(1:0)] + (n - 1)[(0:1)]$ in our normalization, is transformed
under $N$ to the $k$-rational divisor $\sS_0 = (n - 1) [(1 : \beta)] + (n - 1) [(1 :
\beta^{\sigma})]$. We can now apply Proposition~\ref{prop:cycP1} with
$\dD$ equal to the support $\uU_0$ of $\tT_0$ to get a model $q_0 : \Qm_0  :=
\PP^1 \to \PP^1 = \Bm_0$ defined over $k$ of the quotient map $q$. We now
distinguish three cases:

\begin{enumerate}
  \item If $\Xm$ has type $(0,n,m)$, then the branch divisor $\dD$ of
    $\pi_{\iota}$ equals the pullback $q^* (\sS)$. Pulling back $\sS_0$ by
    $q_0$, we therefore get a $k$-rational model $\dD_0 = q_0^* (\sS_0)$ on
    $\Qm_0 \cong \PP^1$ of this branch divisor. We can then construct a model
    $\Xm_0$ of $\Xm$ over $k$ by taking the degree $2$ cover of $\Qm_0 \cong
    \PP^1$ ramified over $\dD_0$.
  \item If $\Xm$ has type $(2,n,m)$, the pullback $q^* (\sS)$ is properly
    contained in $\dD$; we have to add the two points $(1:0)$ and $(0:1)$ in the
    ramification locus of $q$ that constitute the support $\uU$ of $\tT$. This support transforms into the
    ramification divisor $\uU_0 = [(1 : \beta)] + [(1 : \beta^{\sigma})]$ of $q_0$,
    which is $k$-rational. So by ramifying over $\dD_0 = q_0^* (\sS_0) + \uU_0$
    instead, we again get a hyperelliptic descent.
  \item If $\Xm$ has type $(1,n,m)$, then Lemma~\ref{lem:1nm} shows that the
    construction of the binary form $f_L$ from the normalized diagonal
    invariants of $\Xm$ in fact automatically gives rise to a form $f_0 = f_L$
    defined over $k$.
\end{enumerate}

Combining these three cases, we get the following algorithm to construct a
hyperelliptic descent if the obstruction vanishes.

\begin{algo}\label{alg:hypdesc}
  Let $\Xm$ denote a hyperelliptic curve over $K$ of genus $g$ and type
  $(i,n,m)$ with $n > 1$ whose field of moduli for the extension $K | k$ equals
  $k$. Suppose that the hyperelliptic descent obstruction vanishes for $\Xm$.
  Then a binary form $f_0$ defined over $k$ that gives a hyperelliptic
  descent $\Xm_0 : y^2 = f_0$ of $\Xm$ can be constructed as follows.
  \begin{enumerate}[(i)]
    \item Using the methods in~\cite[Sec.2]{lrs}, construct a binary form $f$ of
      type $(i,n,m)$ that represents $\Xm$.
    \item Compute the normalized homogeneous dihedral invariants $I(f)$.
    \item Construct a descent $f_L$ to the invariant extension $L$ of $k$
      defined by $f$ by using Corollary~\ref{cor:param}.
    \item If $i = 1$, then set $f_0 = f_L$ and terminate.
    \item Determine the quantity $r$ in~\eqref{cocyc4} for $f_L$, either by
      using Lemma~\ref{lem:rgen} in the generic case, or alternatively by using
      the methods in~\cite[Sec.2]{lrs}.
    \item Determine a coboundary matrix $N =
      \smat{1}{\lambda^{\sigma}}{\beta}{\lambda^{\sigma} \beta^{\sigma}}$ as in
      Proposition~\ref{prop:WeilTest}.
    \item Let $\uU_0 = [(1 : \beta)] + [(1 : \beta^{\sigma})]$. Construct the
      $k$-rational morphism $q : \PP^1 \to \PP^1$ ramifying over $\uU_0$ as in
      Proposition~\ref{prop:cycP1}.
    \item Let $\dD_0 = q_0^* (\tT_0)$ (resp.\ $\dD_0 = q_0^* (\sS_0) + \uU_0$)
      if $i = 0$ (resp.\ $i = 2$).
    \item Let $f_0$ be the monic polynomial in
      $k[x]$ whose zero divisor equals $\dD_0$. Return $f_0$ and terminate.
  \end{enumerate}
\end{algo}

We refer to Example~\ref{ex:desc1} for a concrete calculation with this
algorithm.

\subsection{Explicit non-hyperelliptic descent}\label{sec:nonhypdescent}

In the case where asking for a hyperelliptic descent and a general descent is
not equivalent, it turns out that one can always descend.

\begin{theorem}\label{thm:MainTh4}
  Let $\Xm$ denote a hyperelliptic curve over $K$ of genus $g$ and type
  $(i,n,m)$ with $n > 1$ whose field of moduli for the extension $K | k$ equals
  $k$. If $n$ and $g$ are odd, then $\Xm$ descends.
\end{theorem}
\begin{proof}
  Let $f$ be a binary form representing $\Xm$ of the given type. As in
  Proposition~\ref{prop:f0}, we first construct a descent $f_L$ of $f$ to the
  invariant extension $L$ of $k$. And once more, as in
  Section~\ref{sec:hypdescobs} the study of the cocycle $c_L \in H^1 (\Gal (L |
  k) , \PGL_2 (L))$ given by~\eqref{cocyc4} will be crucial.

  Let us first explicitly construct a conic $\Qm$ corresponding to the cocycle
  $c_L$. We take $\Qm$ to be given by the equation $x^2 + \lambda y^2 + \mu z^2 =
  0$, where $\lambda = - 1 / r$ and where $\mu \in k$ is such that $L =
  k(\sqrt{-\mu})$. Consider the $L$-rational morphism $\varphi : \PP^1 \to \Qm$
  given by the rational parametrization from the point $(\sqrt{-\mu} : 0 : 1)
  \in \Qm(L)$. Then one verifies that $\varphi^{\sigma} = \varphi \alpha$ for the
  automorphism $\alpha : x \mapsto r / x$ of $\PP^1$ corresponding
  to~\eqref{cocyc4}.

  So $\Qm$ is isomorphic to the canonical model $\Bm_0$ of $\Bm = \Xm / \Aut
  (\Xm)$ over $k$. Moreover $\varphi$ can be used as a descent morphism $\Bm \to
  \Bm_0$. This morphism transforms the branch divisor $\tT = [(1:0)] + [(0:1)]$ of
  the quotient morphism $q : \Qm = \Xm /\iota \to \Xm / \Aut (\Xm) = \Bm$ into a
  $k$-rational divisor $\tT_0$. Indeed, we have
  \begin{equation*}
    \tT_0^{\sigma} = (\varphi_* ([0] +[ \infty]))^{\sigma} = \varphi_*^{\sigma}
    ([0] +[ \infty]) = \varphi_* (\alpha_* ( [0] +[ \infty] )) = \varphi_*
    ([\infty] + [0]) = \tT_0  .
  \end{equation*}

  This allows us to once more construct a cyclic cover $q_0 : \Qm \to \Qm$ ramifying
  over $\tT_0$ that is a $k$-rational model of the cyclic cover $q : \Qm \to \Bm$
  ramifying over $\tT$. Indeed, let $f : \PP^1 \to \PP^1$ be the $K$-rational
  morphism given by $x \to x^n / r^{(n-1)/2}$, and let $f_0 = \varphi f
  \varphi^{-1}$. One verifies that $f = \alpha f \alpha^{-1}$, which implies
  that $f_0 = \varphi f \varphi^{-1} = \varphi \alpha f \alpha^{-1} \varphi^{-1}
  = \varphi^{\sigma} f (\varphi^{\sigma})^{-1} = f_0^{\sigma}$.  Therefore we
  can take $q_0 = f_0$.

  If $\Xm$ has type $(0,n,m)$, then we once again get a $k$-rational model
  $\dD_0 = q_0^* (\sS_0)$ of the branch divisor $\dD$ of $\pi_{\iota}$, this
  time on the conic $\Qm$, which is not necessarily isomorphic with $\PP^1$ over
  $k$. If $\Xm$ has type $(2,n,m)$, then we again have to throw in the
  ramification divisor $\uU_0$ of $q_0$ with $q_0^* (\sS_0)$ to get $\dD_0$.
  Note that this ramification divisor is again $k$-rational; in fact it is given
  by the zero divisor $(y)_0$ of the function $y$ on $\Qm$.

  Regardless, one can now construct a $k$-rational degree~$2$ cover $\Xm_0$ of
  $\Qm$ that ramifies over $\dD_0$ as in the proof~\cite[Prop.4.13]{lr2012}, since
  $g$ is odd. This $\Xm_0$ is the desired descent.
\end{proof}

In this case, the algorithm to obtain a descent is as follows.

\begin{algo}\label{alg:nonhypdesc}
  Let $\Xm$ denote a hyperelliptic curve over $K$ of genus $g$ and type
  $(i,n,m)$ with $n > 1$ whose field of moduli for the extension $K | k$ equals
  $k$. Suppose that $n$ and $g$ are both odd. Then a descent $\Xm_0$ of $\Xm$
  can be constructed as follows.
  \begin{enumerate}[(i)]
    \item Using the methods in~\cite[Sec.2]{lrs}, construct a binary form $f$ of
      type $(i,n,m)$ that represents $\Xm$.
    \item Compute the normalized homogeneous dihedral invariants $I(f)$.
    \item Construct a descent $f_L$ to the invariant extension $L$ of $k$
      defined by $f$ by using Corollary~\ref{cor:param}.
    \item If $i = 1$, then set $f_0 = f_L$ and terminate.
    \item Determine the quantity $r$ in~\eqref{cocyc4} for $f_L$, either by
      using Lemma~\ref{lem:rgen} in the generic case, or alternatively by using
      the methods in~\cite[Sec.2]{lrs}.
    \item Let $\lambda = -1/r$ and let $\mu \in k$ be such that $L =
      k(\sqrt{\mu})$. Construct the conic $\Qm : x^2 + \lambda y^2 + \mu z^2 = 0$
      over $k$, and let $\varphi : \PP^1 \to \Qm$ be the rational parametrization
      from the point $(\sqrt{-\mu} : 0 : 1) \in \Qm(L)$.
    \item Let $f : \PP^1 \to \PP^1$ be the $K$-rational morphism given by $x \to
      x^n / r^{(n-1)/2}$, and let $f_0 = \varphi f \varphi^{-1}$.
    \item Let $\dD_0 = f_0^* (\sS_0)$ (resp.\ $\dD_0 = f_0^* (\sS_0) +
      (y)_0$) if $i = 0$ (resp.\ $i = 2$).
    \item As in~\cite[Prop.4.13]{lr2012}, let $\Xm_0$ be the $k$-rational
      degree~$2$ cover of $\Qm$ ramifying in $\dD_0$.
  \end{enumerate}
\end{algo}

A calculation involving this algorithm can be found in Example~\ref{ex:condesc}.

\subsection{The case of trivial reduced automorphism group}\label{sec:trivial}

We conclude our discussion of explicit descent by considering the case where the
hyperelliptic curve $\Xm$ over $K$ is of type $(i,1,m)$, or more
straightforwardly expressed, the reduced automorphism group $\Xm$ is trivial.
Our descent obstruction results in the previous sections generalize to this
case. If $g$ is even, then our Theorem~\ref{thm:MainTh1} recovers a classical
result by Mestre~\cite{mestre} which states that in even genus a curve $\Xm$ with trivial
reduced automorphism group descends if and only if it descends hyperelliptically
\cite{mestre}. On the other hand, if $g$ is odd, then~\cite[Prop.4.13]{lr2012}
shows that a descent always exists, completely in line with our
Theorem~\ref{thm:MainTh4}.

Still, to construct an explicit descent $\Xm_0$ of $\Xm$ in these cases is
actually more complicated, due to the absence of homogeneous dihedral
invariants. We briefly discuss two ways to get around this problem.

\subsubsection{The covariant method}

The first and most effective way is to use the \emph{covariant
method}~\cite{lrs}. We now apply it to the case under consideration.

\begin{proposition}\label{prop:covdesc} Let $\Xm$ be a hyperelliptic curve with
  trivial reduced automorphism group, defined by a binary form $f$. Let $c$ be a
  covariant of $f$ with single roots whose automorphism group is trivial as
  well, and let $\Ym : y^2 = c$ be the hyperelliptic curve defined by $c$.
  \begin{enumerate}[(i)]
    \item The field of moduli of the curve $\Ym$ with respect to the extension
      $K | k$ again equals $k$.
    \item $\Xm$ admits an hyperelliptic descent if and only if $\Ym$ does.
      Moreover, if $\Ym$ does not allow a hyperelliptic descent, then neither
      does $\Xm$ allow a general descent if the genus of $\Xm$ is even.
    \item Suppose that $\Ym$ admits a hyperelliptic descent $\Ym_0$ defined by a
      homogeneous polynomial $c_0$. Then if $A \in \GL_2 (K)$ transforms $c$
      into $c_0$, the transformation $A . f$ of $f$ also yields a descent of $f$
      (after possibly dividing out a scalar).
    \item Suppose that the genus of $\Xm$ is odd. Let $\Rm = \Ym / \iota_{\Ym} =
      \Ym / \Aut (\Ym)$, and let $\Rm_0$ be the canonical model of $\Rm$. Let
      $\varphi : \Rm \to \Rm_0$ be the canonical descent morphism. Let $\dD$ be
      the branch locus of $\pi : \Xm \to \Xm / \iota_{\Xm} = \Xm / \Aut (\Xm)$.
      Then the image $\dD_0 = \varphi (\dD)$ is a $k$-rational divisor on
      $\Bm_0$. There exists a degree $2$ cover $\Xm_0$ of $\Rm_0$ over $k$ whose
      branch locus equals $\dD_0$. The curve $\Xm_0$ is then a descent of $\Xm$.
  \end{enumerate}
\end{proposition}
\begin{proof}
  (i) By definition of covariance, the isomorphisms $\Xm \to \Xm^{\sigma}$ give
  rise to isomorphisms $\Ym \to \Ym^{\sigma}$.

  (ii) The first part follows from~\cite[Thm.3.8]{lrs}, and the second part
  from~\cite{mestre}.

  (iii) This again follows from~\cite[Thm.3.8]{lrs}. 

  (iv) The canonical descent datum on the quotient $\Rm$ gives rise to the
  conic $\Rm_0$, which is a $k$-rational model for both $\Ym / \iota_{\Ym}$ and
  $\Xm / \iota_{\Xm}$. By covariance, the morphism $\varphi$ is also the Weil
  coboundary $(\Xm / \iota_{\Xm} , \dD) \to (\Rm_0 , \dD_0)$ for the pair $(\Xm /
  \iota_{\Xm} , \dD)$. Therefore the image $\dD_0$ is indeed $k$-rational. One
  then again invokes~\cite[Prop.4.13]{lr2012}.
\end{proof}

\begin{remark}
  Proposition~\ref{prop:covdesc} is especially useful for sextic and octavic
  covariants $c$, since for these, the results from~\cite{mestre}
  and~\cite[Sec.2]{lrs} allow us to test effectively whether it has trivial
  automorphism group. Moreover, in these cases effective methods to determine
  the descent obstruction are available, as well as methods to determine an
  explicit descent if this obstruction vanishes.
\end{remark}

  

\begin{remark}\label{rem:nontrivcov}
  At least in characteristic $0$ and genus $g \leq 2^7$, a covariant $c$ with
  the properties in Proposition~\ref{prop:covdesc} exists. More precisely, if
  we let $f$ be a generic binary form defining $\Xm$, then the covariant form $c
  = (f,f)_{2 g - 2}$ is a nonsingular binary octavic with trivial automorphism
  group.

  Given a genus $g$, this statement is easy to verify with a computer algebra
  package; by the proof of \cite[Prop.2.9]{lrs}, it suffices to produce a single
  example of such an $f$. Usually the first randomly chosen $f$ already works,
  in line with our expectations that a covariant $c$ should generically always
  exist. On the other hand, to prove the existence of such a covariant in the
  generic case for arbitrary genus, let alone for all hyperelliptic curves with
  trivial reduced automorphism group, seems to be more involved.
\end{remark}

\subsubsection{Explicit cocycle construction}

We mention a second approach, which could be used in the unlikely event that no
suitable covariant is available. If $\Xm$ is defined by a binary form $f$ over a
finite Galois extension $M$ of $k$, then one can construct a suitable cocycle
for $\Bm = \Xm / \Aut (\Xm) = \Xm / \iota$ over $M$ by using the fast methods
from~\cite[Sec.2]{lrs}. One then calculates canonical model $\Bm_0$ of $\Bm$
along with the descent morphism $\Bm \to \Bm_0$ as in~\cite{hidrey}. If the
descent obstruction is trivial, then one proceeds as before; one constructs a
descent $\Xm_0$ of $\Xm$ by ramifying over the image of the branch locus of $\Xm
\to \Bm$ under the morphism $\Bm \to \Bm_0$.

\subsection{Counterexamples} \label{sec:counter}

To finish this section, we will show how to obtain explicit counterexamples to
descent. We first treat some classical counterexamples to hyperelliptic descent,
where $K = \CC$ and $k = \RR$. In this case, the classification of the curves
that do not descend is known. These curves were essentially first constructed
by~\cite{bujtur}, but the final correct statement is due to Huggins
in~\cite{huggins-thesis}. The following proposition is a slight improvement of
their results.

\begin{proposition}\label{prop:huggins}
  Let $\Qm_0$ be the pointless conic over $\RR$ defined by the homogeneous
  equation $x^2 + y^2 + z^2 = 0$. Let $(\PP^1 , \rR)$ be one of the divisors
  over $\CC$ defined in~\cite[Prop.5.0.5]{huggins-thesis}. Consider the
  $\CC$-morphism $\varphi: \PP^1 \to \Qm_0$ given by $$(s:t) \to (i (s^2 + t^2)
  : s^2 - t^2 : 2 s t).$$ Then $\rR_0 = \varphi_* (\rR)$ is an $\RR$-rational
  divisor on $\Qm_0$ that defines a hyperelliptic curve $\Xm$ over $\CC$ whose
  field of moduli for the extension $\CC | \RR$ is $\RR$ but which does not
  descend hyperelliptically.

  Up to isomorphism over $\CC$, all counterexamples to hyperelliptic descent
  from $\CC$ to $\RR$ are of the form $\Xm$ considered above. Such a curve $\Xm$
  still descends to $\RR$ if and only if its genus and the cardinality of its
  reduced automorphism group $\Gbar$ of $\Xm$ are both odd.
\end{proposition}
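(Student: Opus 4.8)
The plan is to verify the four claims of Proposition~\ref{prop:huggins} in sequence, building on the constructive machinery of Theorem~\ref{thm:MainTh3} and Theorem~\ref{thm:MainTh4}, and on Huggins' classification of the branch loci over $\CC$ that fail to descend for the extension $\CC | \RR$. First I would check that the conic $Q : x^2 + y^2 + z^2 = 0$ is genuinely pointless over $\RR$, which is immediate since a sum of three real squares vanishes only at the origin, and that $\varphi$ is an isomorphism $\PP^1_{\CC} \to Q_{\CC}$; the given parametrization realizes exactly the standard anticanonical embedding. The key algebraic fact to extract is the cocycle relation $\varphi^{\sigma} = \varphi \alpha$ for the nontrivial $\sigma \in \Gal(\CC | \RR)$ and a fixed order-$2$ automorphism $\alpha$ of $\PP^1_{\CC}$. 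Applying complex conjugation to the three coordinates of $\varphi(s:t)$ and comparing shows that $\varphi^{\sigma}(s:t) = \varphi(\bar{s} : \bar{t})$ agrees with $\varphi(\alpha(s:t))$ for $\alpha : (s:t) \mapsto (t:s)$ (the factor $i$ flips sign under conjugation, which is precisely what forces the swap). This identifies $Q$ as the descent of $\PP^1_{\CC}$ attached to the inflated Weil cocycle $\sigma \mapsto \alpha$, exactly the situation of Lemma~\ref{lem:QuadExt}.

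Next I would establish $\RR$-rationality of $\rR_0 = \varphi(\rR)$. Since by Huggins the divisor $\rR$ over $\CC$ satisfies $\rR^{\sigma} = \alpha(\rR)$ (this is the defining property of the loci in~\cite[Proposition 5.0.5]{huggins-thesis} that produce field of moduli $\RR$ but resist hyperelliptic descent), the same computation as in the proof of Proposition~\ref{prop:order2} gives
\begin{align*}
\rR_0^{\sigma} = (\varphi(\rR))^{\sigma} = \varphi^{\sigma}(\rR^{\sigma}) = \varphi\,\alpha(\alpha(\rR)) = \varphi(\rR) = \rR_0,
\end{align*}
so $\rR_0$ descends to $Q$ over $\RR$. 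The associated double cover then has field of moduli $\RR$ by the branch-locus argument already used in Lemma~\ref{lem:QuadExt}: conjugate branch loci differ by an automorphism of the genus-$0$ curve, so the curve is $\CC$-isomorphic to all its conjugates. That it fails to descend \emph{hyperelliptically} is where Theorem~\ref{thm:MainTh2} enters: a hyperelliptic descent would force $B_0 \cong Q$ to carry an $\RR$-rational point, contradicting pointlessness of $Q$; conversely Huggins' list is constructed precisely so that the quotient model is this pointless conic. The completeness assertion (``all such curves arise this way'') I would deduce by invoking Huggins' classification directly, noting that every counterexample over $\CC | \RR$ has reduced automorphism group containing an involution whose quotient conic must be the unique pointless real conic up to isomorphism, namely $Q$.

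The last and most interesting claim is the dichotomy: such a curve descends to $\RR$ (non-hyperelliptically) if and only if its genus $g$ and $\#\Gbar$ are both odd. The forward-and-backward structure here should follow from combining Theorem~\ref{thm:MainTh1} with Theorem~\ref{thm:MainTh4}. If $g$ and $\#\Gbar$ are both odd, Theorem~\ref{thm:MainTh4} guarantees that $\Xm$ descends regardless of the hyperelliptic obstruction, using exactly the construction that builds a cyclic cover over the pointless conic $B_0 = Q$ and descends the ramification locus when $g$ is odd. Conversely, if we are not in the both-odd case, then Theorem~\ref{thm:MainTh1} tells us that descent and hyperelliptic descent are equivalent (for even $g$ this is Mestre, and for even $\#\Gbar$ this is Proposition~\ref{prop:order2}); since we have just shown no hyperelliptic descent exists, no descent of any kind can exist. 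The main obstacle I anticipate is bookkeeping the interface with Huggins' notation: one must confirm that the divisors $(\PP^1_k, \rR)$ of~\cite[Proposition 5.0.5]{huggins-thesis} indeed satisfy the cocycle compatibility $\rR^{\sigma} = \alpha(\rR)$ relative to the specific $\alpha = (s:t)\mapsto(t:s)$ induced by our chosen $\varphi$, rather than some other conjugate involution; a change of coordinates on $\PP^1_{\CC}$ may be needed to align the two setups, and one should check this does not disturb the pointlessness of $Q$ or the parity statement, which depends only on the intrinsic invariants $g$ and $\#\Gbar$.
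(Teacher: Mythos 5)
Your overall route is the same as the paper's: verify a cocycle relation $\varphi^{\sigma} = \varphi\,\alpha$, deduce $\RR$-rationality of $\rR_0$ by the same two-line computation, cite~\cite[Proposition 5.0.5]{huggins-thesis} for the classification and the failure of hyperelliptic descent, and obtain the parity dichotomy from Theorems~\ref{thm:MainTh1} and~\ref{thm:MainTh4}. However, your central computation is wrong: the involution satisfying $\varphi^{\sigma} = \varphi\,\alpha$ is the antipodal map $\alpha : (s:t) \mapsto (-t:s)$, not the swap $(s:t) \mapsto (t:s)$. Indeed, $\varphi^{\sigma}(s:t) = (-i(s^2+t^2) : s^2-t^2 : 2st)$, while $\varphi(t:s) = (i(s^2+t^2) : -(s^2-t^2) : 2st)$; the coordinatewise ratios are $(-1,-1,1)$, so these are not projectively equal, whereas $\varphi(-t:s) = (i(s^2+t^2) : -(s^2-t^2) : -2st)$ does equal $\varphi^{\sigma}(s:t)$ after scaling by $-1$. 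This is not a cosmetic slip. By Proposition~\ref{prop:WeilTest}, the swap is represented by $\smat{0}{1}{1}{0}$ with $-\det = 1$, which is a norm from $\CC$, so the twist of $\PP^1_{\CC}$ it defines is $\PP^1_{\RR}$; only the fixed-point-free antipodal map, with $-\det = -1$ not a norm, can produce the pointless conic $Q$. Hence with your $\alpha$ the relation $\varphi^{\sigma} = \varphi\,\alpha$ cannot hold for \emph{any} parametrization of $Q$, and Huggins' divisors satisfy $\rR^{\sigma} = \alpha(\rR)$ precisely for the antipodal $\alpha$, so the ``bookkeeping'' you defer at the end is not a harmless coordinate change --- it is where your argument breaks. (A related minor slip: $\varphi^{\sigma}$ is the map with conjugated coefficients, i.e. $\varphi^{\sigma}(s:t) = \overline{\varphi(\bar{s}:\bar{t})}$, not $\varphi(\bar{s}:\bar{t})$.)

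A second gap is your justification that no hyperelliptic descent exists: you identify $B_0$ with $Q$ and invoke Theorem~\ref{thm:MainTh2}. But $B_0$ is the canonical model of $\Xm / G$, whereas $Q$ is a model of $\Xm / \iota$; these coincide only when $\Gbar$ is trivial, and in general the map $Q \rightarrow B_0$ has degree $\# \Gbar$, so pointlessness of $Q$ does not transfer to $B_0$ without further argument (for odd $\# \Gbar$ one can push an odd-degree rational divisor around, but for even $\# \Gbar$ this reasoning fails). The paper does not attempt this re-derivation at all: it verifies the rationality of $\rR_0$ exactly as you do (with the correct $\alpha$), then cites Huggins' Proposition 5.0.5 wholesale for the field of moduli, the non-existence of a hyperelliptic descent, and the completeness claim, adding only the sharpening --- the both-odd descent criterion --- via Theorem~\ref{thm:MainTh4}. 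That last part of your proposal is correct and matches the paper.
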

\begin{proof}
  Let $\sigma$ be the generator of $\Gal (\CC | \RR)$. Then $\rR^{\sigma} =
  \alpha_* (\rR)$, where $\alpha : \PP^1 \to \PP^1$ is the $\RR$-rational morphism
  given by $(s:t) \to (-t:s)$. Now we have $\varphi^{\sigma} = \varphi
  \alpha$. Therefore
  \begin{equation*}
    \rR_0^{\sigma} = \varphi_*^{\sigma} (\rR^{\sigma}) = (\varphi \alpha )_*
    (\rR) = \varphi_* (\alpha_* (\rR)) = \varphi_* (\rR) = \rR_0
  \end{equation*}
  and hence $\rR_0$ is indeed $\RR$-rational. All is now clear
  from~\cite[Prop.5.0.5]{huggins-thesis}, except for our sharpening of
  the result (the final line of the proposition). But this follows from
  Theorem~\ref{thm:MainTh4}.
\end{proof}

\begin{remark}
  A signature argument as in~\cite[Prop.4.3]{lr2012} can also be used to prove
  Proposition~\ref{prop:huggins}, except if $\Gbar \cong \CG_{n}$ with $n$ odd
  and either $g/n$ is odd or $(g+1)/n$ is even. Theorem~\ref{thm:MainTh4} shows
  that in these cases, a descent is always possible, and
  Algorithm~\ref{alg:nonhypdesc} shows how this descent can be obtained
  explicitly.
\end{remark}

Having precisely analyzed the obstruction to descent in the previous
subsections, it is now straightforward to give a complete classification of
those hyperelliptic curves with tamely cyclic and nontrivial reduced
automorphism group whose field of moduli is not a field of definition.

\begin{theorem}\label{thm:MainTh5}
  Let $L = k(\sqrt{d_1})$ be a quadratic extension of $k$ defined by an element
  $d_1$ of $k$, and choose $d_2 \in k$ such that $d_2$ is not a norm from $L$.
  Let $u \in L$ be such that $\Nm^{L}_k (u) = 1$. Let $m = 2 \ell$ be
  an even number, and choose $a_m , \ldots , a_0$ in $L$ such that 
  \begin{equation*}
    a_{\ell}^{\sigma} = u a_{\ell} , \, a_{\ell - 1} = u d_2 a_{\ell +
    1}^{\sigma}, \ldots , \, a_0 = u d_2^{\ell} a_m^{\sigma} 
  \end{equation*}
  for the nontrivial element $\sigma$ of $\Gal (L | k)$. Consider the binary
  forms
  \begin{align*}
    f & = a_m x^{m n} + a_{m-1} x^{(m-1)n} z^n + \ldots + a_1 x^n z^{(m-1)n} +
    a_0 z^{m n}  , \\
    g & =  x z f  . 
  \end{align*}
  Suppose that $f$ is of type $(0,n,m)$, so that $g$ is of type $(2,n,m)$, and
  that the geometric automorphism group $\Aut_K (f)$ is generated by $(x,z)
  \mapsto (\zeta_n x,z)$. Then the curves corresponding to $f$ and $g$ have
  field of moduli $k$ for the extension $K | k$ and do not descend
  hyperelliptically to $k$.
  
  Up to isomorphism over $K$, all counterexamples to hyperelliptic descent from
  $K$ to $k$ are of the form $\Xm$ considered above. Such a curve $\Xm$ still
  descends to $k$ if and only if its genus and $n$ are both odd.
\end{theorem}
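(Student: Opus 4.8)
The plan is to verify the theorem by combining the explicit cocycle analysis from Subsection~\ref{subsec:hypdescent} with the norm criterion of Proposition~\ref{prop:WeilTest} and the descent result of Theorem~\ref{thm:MainTh4}. First I would check that the prescribed relations between the coefficients force $f^{\sigma}$ to lie in the orbit of $S.f$ rather than $f$ under the diagonal torus $T$, where $S = \smat{0}{1}{1}{0}$. Indeed, the symmetry $a_{\ell-i} = u\, d_2^{\,i}\, a_{\ell+i}^{\sigma}$ together with $\Nm^L_k(u) = 1$ should exhibit a transformation $\smat{\lambda}{0}{0}{\mu}$ realizing the exchange, and a direct computation of the ratio $r = \mu^n/\lambda^n$ arising in~(\ref{cocyc4}) should yield $r = d_2$ up to the norm-one factor $u$. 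This identifies the induced cocycle on $B = \Xm/\Aut(\Xm)$ with~(\ref{cocyc4}) for the value $r = d_2$. The field of moduli being $k$ then follows exactly as in the proof of Lemma~\ref{lem:QuadExt}: the branch locus of $f^{\sigma}$ differs from that of $f$ by an automorphism of $\PP^1_K$, so $\Xm^{\sigma} \cong_K \Xm$, and invariance under the subgroup fixing $L$ is automatic.

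Next I would apply Proposition~\ref{prop:WeilTest}: the conic $B_0$ descending $\PP^1_K$ along the cocycle $\sigma \mapsto \smat{0}{d_2}{1}{0}$ is isomorphic to $\PP^1_k$ if and only if $-\det\smat{0}{d_2}{1}{0} = d_2$ is a norm from $L$. Since $d_2$ is chosen \emph{not} to be a norm, $B_0$ is a pointless conic, so by Theorem~\ref{thm:MainTh2} the curve $\Xm$ does \emph{not} descend hyperelliptically. This handles both $f$ and its companion $g = xzf$, which is in normal form~(\ref{cform2}) with the same reduced automorphism group and the same induced cocycle, hence the same obstruction. The hypothesis that $\Aut_K(f)$ is generated by $(x,z) \mapsto (\zeta_n x, z)$ guarantees that $\Gbar \cong \CG_n$ is genuinely the full reduced automorphism group, so that the normal form~(\ref{ccform}) and the associated normalizer $D$ apply and we are genuinely in a $\CG_2 \times \CG_n$ or $\CG_{2n}$ stratum rather than a more symmetric degeneration.

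For the claim that this \emph{exhausts} the counterexamples admitting a normal form over a quadratic extension, I would invoke Proposition~\ref{prop:f0}, which shows that any such $\Xm$ is equivalent to a normal form $f_L$ over a quadratic $L/k$, together with the cocycle dichotomy established in Subsection~\ref{subsec:hypdescent}: the induced cocycle is either~(\ref{cocyc3}), which is always a coboundary and yields a hyperelliptic descent, or~(\ref{cocyc4}) with $r \in k$. In the latter case the coefficient relations defining $f_L$ are precisely those of the theorem after normalizing $r = d_2$ and absorbing the Hilbert~90 freedom into the norm-one parameter $u$, so every counterexample arises in the stated form. Finally, the descent statement follows from Theorem~\ref{thm:MainTh4}: when $g$ and $n$ are both odd, that theorem produces a (non-hyperelliptic) descent even though $B_0$ is pointless, whereas if either is even, the equivalence of Theorem~\ref{thm:MainTh1} applies and the failure of hyperelliptic descent becomes a failure of descent altogether. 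The main obstacle I anticipate is the bookkeeping in the first step: precisely matching the given coefficient congruences to the transformation law~(\ref{cctranss}) and confirming that the norm-one factor $u$ is exactly the residual Hilbert~90 ambiguity, so that the correspondence between the families of the theorem and the cocycles~(\ref{cocyc4}) is genuinely a bijection and no counterexamples are missed or double-counted.
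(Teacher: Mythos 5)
Your proposal is correct and follows essentially the same route as the paper's (much terser) proof: apply the norm criterion of Proposition~\ref{prop:WeilTest} to the cocycle matrix $\smat{0}{d_2}{1}{0}$, whose $-\det$ equals $d_2$ and is not a norm by hypothesis, deduce the universality claim from the cocycle dichotomy of Subsection~\ref{subsec:hypdescent}, and settle the final descent statement via Theorems~\ref{thm:MainTh1} and~\ref{thm:MainTh4}. The only difference is that you spell out the details (the computation identifying $r = d_2$, the field-of-moduli check, and the Hilbert~90/norm-one bookkeeping for $u$) that the paper leaves implicit as ``obvious from the constructions in this section.''
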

\begin{proof}
  The forms under consideration are already in normal form. Therefore their
  invariant extension equals the quadratic extension $L$ itself. This makes it
  straightforward to calculate the matrix~\eqref{cocyc4} for these examples,
  which is simply given by $\smat{0}{d_2}{1}{0}$. Combining
  Theorem~\ref{thm:MainTh2} with Proposition~\ref{prop:WeilTest} then shows that
  we indeed get counterexamples.

  The universality statement needs a bit more work. Note first that indeed any
  counterexample is determined by a normal form~\eqref{ccform}-\eqref{cform2}
  in light of Proposition~\ref{prop:f0}(ii). We only have to cull those normal
  forms for which the element $r$ in the matrix~\eqref{cocyc4} is not a norm
  from $L = k (\sqrt{d_1})$. We can do this by inverting the procedure in
  Section~\ref{sec:descent}; one chooses $d_2 = r$ not to be a norm, constructs
  the matrix $A = \smat{0}{d_2}{1}{0}$, and finally determines those forms
  $f$ over $L$ for which there exists a scalar $u$ such that $A f = u
  f^{\sigma}$. This gives the requested forms, with the demand that $\Nm^L_k (u)
  = 1$ coming from the compatibility condition $(f^{\sigma})^{\sigma} = f$.

  The final statement of the Theorem is again a consequence of
  Theorem~\ref{thm:MainTh4}.
\end{proof}

\begin{remark}\label{rem:quatalg}
  Phrased differently, we have shown that the curves constructed in
  Theorem~\ref{thm:MainTh5} descend if and only if the quaternion algebra
  defined by $d_1$ and $d_2$ splits. This gives some unexpected symmetry
  properties for the obstruction, since for example exchanging $d_1$ and $d_2$
  yields the same quaternion algebra.
\end{remark}

%
%

\begin{remark}
  For genus $3$, the explicit stratum equations in~\cite[Sec.3]{lr2012} can be
  used to quickly determine whether a given curve $\Xm$ is of a given type
  $(i,n,m)$. For general genus, it is usually easy to verify this once the
  coefficients of the polynomial $f$ defining $\Xm$ are given, by using the
  methods of~\cite[Sec.2]{lrs}.
\end{remark}

This construction gives counterexamples for many more quadratic field
extensions than the usual $\CC | \RR$. Moreover, the cases where $d_2$ \emph{is}
a norm from $L$ yield a host of examples for which it is anything but obvious
that the resulting curves descend, and which we will consider in the next section.

\section{Implementation and examples}\label{sec:imp}

We have used the generic homogeneous dihedral invariants of
Proposition~\ref{prop:GenTInv} in~\texttt{Magma} to give an implementation of
Algorithms~\ref{alg:hypdesc} and~\ref{alg:nonhypdesc} for the curves for which
these invariants suffice. Our code is available online\footnoteref{foothyp}.

The implementation is straightforward considering the constructive methods that
were used. First one determines a generator $\alpha_0$ of the reduced
automorphism group, which can be done effectively by using the methods
in~\cite[Sec.2]{lrs}.  Subsequently, one diagonalizes $\alpha_0$ over an at most
quadratic extension of the base field of $K$. The remaining steps in
Algorithms~\ref{alg:hypdesc} and~\ref{alg:nonhypdesc} (determining and
normalizing the homogeneous invariants, parametrizing to determine a partial
descent, solving a norm equation and if necessary constructing the necessary
cover to define $\Xm$ over $k$) are effective and efficient for `natural' fields
such as number fields and finite fields.


We now give some examples of these computations. Throughout, we will have $k = \QQ$
and $K = \overline{\QQ}$. To begin with, we mention that we usually do not need
the full set of generic homogeneous dihedral invariants in our computations, and
our algorithms take this into account. The following example of a hyperelliptic
curve of type $(0,2,6)$ illustrates this.

\begin{example}\label{ex:nondesc1}
  In Theorem~\ref{thm:MainTh3}, let $d_1 = 2, d_2 = 3$, let $\sigma$ be an
  automorphism of $K$ restricting to a generator of $k (\sqrt{d_1})$, and take
  {\small
  \begin{align*}
    a_6 & = 7 + \sqrt{d_1}, a_5 = 3 - 2 \sqrt{d_1} , a_4 = (1 + \sqrt{d_1}) ,
    a_3 = 12 \sqrt{d_1} , \\ a_2 & = -d_2 a_4^{\sigma} = -d_2 (1 - \sqrt{d_1}),
    a_1 = -d_2^2 a_5^{\sigma} = -d_2^2 (3 + 2 \sqrt{d_1}), a_0 = -d_2^3
    a_6^{\sigma} = -d_2^3 (7 - \sqrt{d_1}).
  \end{align*}
  }
  Let
  \begin{equation*}
    f = a_6 x^{12} + a_5 x^{10} z^{2} + a_4 x^{8} z^{4} + a_3 x^{6} z^{6} + a_2
    x^{4} z^{8} + a_1 x^{2} z^{10} + a_0 z^{12}  .
  \end{equation*}
  As in Remark~\ref{rem:smaller}, the corresponding hyperelliptic curve is
  determined by the following subset of the homogeneous dihedral invariants:
  \begin{equation*}
    (I_1 , I_{2,0} , I_{2,1} , I_{2,2} , I_{3,3,1} , I_{3,3,2} , I_{3,4}) .
  \end{equation*}
  Indeed, one shows by direct calculation that $J_3 \neq J'_3$ for $f$, hence
  also for all its transformations. Having chosen the ordering of the roots $J_3$
  and $J'_3$ of the corresponding quadratic equation, the linear system
  \begin{align*}
    J_i + J'_i & = I_{i,i,1} , \\
    J'_3 J_i + J_3 J'_i  & = I_{3,i}
  \end{align*}
  is always invertible for $i > 3$, determining $J_i$ and $J'_i$ in terms of the
  choice of the order of $J_3$ and $J'_3$ and the invariants $(I_1 , I_{2,0} ,
  I_{2,1} , I_{2,2} , I_{3,3,1} , I_{3,3,2} , I_{3,4})$. In particular, we need
  only normalize these latter invariants to determine the field of moduli
  of our curve. This normalization is
  \begin{equation*}
    \left(1 , \frac{-3\cdot 47}{2^5} , \frac{-1}{2^5} , \frac{1}{2^5 \cdot 3} ,
    \frac{-1}{2^4} , \frac{-1}{2^{15} \cdot 3^2} , \frac{-1}{2^{13} \cdot 3^2}
    \right),
  \end{equation*}
  so the field of moduli is indeed the rational field $k$. Calculating the norm
  obstruction $r$ and $L = \QQ (\sqrt{d_1})$ in Theorem~\ref{thm:MainTh3} and
  using the norm criterion now shows that the curve corresponding to $f$ does
  not descend to $k$. This is as expected, because this example was constructed
  by using Theorem~\ref{thm:MainTh5}.
\end{example}

Now we will consider a hyperelliptic curve of type $(2,2,3)$.
\begin{example}\label{ex:cform2}
  Consider the genus $3$ hyperelliptic curve $\Xm$ over $K$ corresponding to the
  binary form
  \begin{equation*}
    \begin{array}{c}
      (20456 \sqrt{5} + 43640) x^8 + (-17772 \sqrt{5} - 56716) x^7 z + (28984
      \sqrt{5} + 3584) x^6 z^2 \\
       + (25862 \sqrt{5} - 95522) x^5 z^3 + (67320 \sqrt{5} - 136740) x^4 z^4 +
       (84995 \sqrt{5} - 193217) x^3 z^5 \\
       + (75097 \sqrt{5} - 167611) x^2 z^6 + (38764 \sqrt{5} - 86676) x z^7 +
       (7942 \sqrt{5} - 17762) z^8
    \end{array}
  \end{equation*}
  over $L = \QQ(\sqrt{5}) \subset K$. This curve has an automorphism of order $2$,
  and it allows a normal form \eqref{cform2} over $L$ given by
  \begin{align*}
    \begin{array}{c}
      x z ((11270829 \sqrt{5} - 25242007) x^6 + (1408299 \sqrt{5} - 5284449) x^4
      z^2 \\
      + (-5642070 \sqrt{5} - 12929374) x^2 z^4 + (-204992252 \sqrt{5} -
      458411532) z^6)  .
    \end{array}
  \end{align*}
  The normalized homogeneous dihedral invariants of this form now generate the
  field of moduli $k$. They are given by
  \begin{equation*}
    (I_{2,0},I_{2,1},I_{4,4,1},I_{4,4,2}) = \left(\frac{2}{3} , 1 ,
    \frac{29}{3^2} , \frac{2}{3} \right) .
  \end{equation*}
  Lemma~\ref{lem:rgen} shows that the norm obstruction is trivial because $m =
  3$ is odd. In this particular case, this reflects itself in the fact that the
  homogeneous diagonal invariants are themselves already rational. They are
  given by
  \begin{equation*}
    (J_{2,0},J_{2,1},J_4) = \left(\frac{2}{3} , 1 , 3\right)  .
  \end{equation*}
  Reconstructing as in Corollary~\ref{cor:param}, we get the descent
  \begin{equation*}
    y^2 = x z \left(3 x^6 + x^4 z^2 + x^2 z^4 + \frac{2}{9} z^6\right)  .
  \end{equation*}
\end{example}

Finally, we descend a hyperelliptic curve of type $(1,3,3)$.
\begin{example}\label{ex:cform1}
  Consider the genus $4$ hyperelliptic curve $\Xm$ corresponding to the binary
  form
  \begin{align*}
    \begin{array}{c}
      (138076 \sqrt{5} + 291100) x^{10} + (-120728 \sqrt{5} - 370816) x^9 z \\
      + (243042 \sqrt{5} + 208878) x^8 z^2 + (48987 \sqrt{5} - 760529) x^7 z^3
      \\
      + (515947 \sqrt{5} - 751581) x^6 z^4 + (754227 \sqrt{5} - 1880505) x^5 z^5
      \\
      + (1243617 \sqrt{5} - 2713183) x^4 z^6 + (1462433 \sqrt{5} - 3287139) x^3
      z^7 \\
      + (1243263 \sqrt{5} - 2777109) x^2 z^8 + (625402 \sqrt{5} - 1398734) x z^9
      \\
      + (124654 \sqrt{5} - 278722) z^{10}  .
    \end{array}
  \end{align*}
  over $L = \QQ(\sqrt{5})$. This curve has an automorphism of order $3$, and it
  allows a normal form \eqref{cform1} over the ground field given by
  \begin{align*}
    \begin{array}{c}
      z ( (91955817 \sqrt{5} - 213442907) x^9 + (268416746 \sqrt{5} + 589172042)
      x^6 z^3 \\
      + (-30323641593 \sqrt{5} - 67805941509) x^3 z^6 + (3073332514916 \sqrt{5}
      + 6872180416996) z^9)  .
    \end{array}
  \end{align*}
  Lemma~\ref{lem:1nm} shows that the normalized homogeneous diagonal invariants
  for this case will generate the field of moduli $k = \QQ$. In this case, these
  invariants are up to scalar given by
  \begin{equation*}
    (J_{2,0},J_{2,1},J_4) = \left(\frac{2}{3} , 1 , \frac{8}{3^2}\right)  .
  \end{equation*}
  Using the parametrization of Corollary~\ref{cor:param} for the generic case,
  we obtain the following hyperelliptic descent of $\Xm$:
  \begin{equation*}
    y^2 = z \left(\frac{8}{3^2} x^9 + x^6 z^3 + x^3 z^6 + \frac{3}{4} z^9\right)
    .
  \end{equation*}
  As Lemma~\ref{lem:1nm} predicts, this normal form is already defined over the
  field of moduli $k$ itself (rather than over a quadratic extension).
\end{example}

We now discuss some examples of curves of genus $3$. Indeed, this was our
initial motivation for this paper, the cases of genus $2$ having been completely
resolved already in~\cite{mestre} and~\cite{carquer}.

The invariant theory of binary octavics was completely determined by Shioda
in~\cite{shioda67}, and can be applied to solve the descent problem for genus
$3$ hyperelliptic curves with great efficiency. The steps for this are as
follows.
\begin{itemize}
  \item Using the stratum equations from~\cite[Sec.3]{lr2012}, determine the
    geometric automorphism group $G$ of $\Xm$ from its Shioda invariants;
  \item If $G \ncong \DG_4$, then use either the parametrizations or
    reconstruction methods from~\cite[Sec.3]{lr2012} or (in the case $G \cong
    \CG_2^3$) the covariant descent method in~\cite[Sec.3B2]{lrs};
  \item If $G \cong \DG_4$, then determine the homogeneous dihedral invariants of
    $f$, directly or from its Shioda invariants~\footnoteref{footg3}, and apply the methods of this paper.
\end{itemize}

\begin{example}\label{ex:nondesc2}
  As in Example~\ref{ex:nondesc1}, let $d_1 = 2, d_2 = 3$. This time, take
  \begin{align*}
    a_4 &= 7 + \sqrt{d_1}, a_3 = 3 - 2 \sqrt{d_1} , a_2 = 12 \sqrt{d_1} ,\\
    a_1 &= -d_2 a_3^{\sigma} = - d_2 (3 + 2 \sqrt{d_1}), a_0 = -d_2^2
    a_4^{\sigma} = -d_2^2 (7 - \sqrt{d_1}).
  \end{align*}
  Construct the binary octavic
  \begin{equation*}
    f = a_4 x^8 + a_3 x^6 z^2 + a_2 x^4 z^4 + a_1 x^2 z^6 + a_0 z^8 .
  \end{equation*}
  The normalized Shioda invariants of this octavic (and of its transformations
  under $\GL_2 (K)$) are given by
  \begin{align*}
    \begin{array}{c}
      - 5\cdot 7\cdot 401^3 / (3^3 \cdot 13^2 \cdot 23^2 \cdot 1667^2) , \\
      - 5\cdot 7\cdot 401^3 / (3^3 \cdot 13^2 \cdot 23^2 \cdot 1667^2),  \\
      2^4 \cdot 5^4 \cdot 7^{13} \cdot 401^4 \cdot 3435911 / (3^7 \cdot 13^4
      \cdot 23^4 \cdot 1667^4) , \\
      2^3 \cdot 5^4 \cdot 7^{16} \cdot 401^5 \cdot 1663\cdot 29947 / (3^7 \cdot
      13^5 \cdot 23^5 \cdot 1667^5) , \\
      2^3 \cdot 5^7 \cdot 7^{18} \cdot 47\cdot 59\cdot 401^6 \cdot 3271\cdot
      14653 / (3^{11} \cdot 13^6 \cdot 23^6 \cdot 1667^6) , \\
      2^3 \cdot 5^7 \cdot 7^{22} \cdot 401^7 \cdot 166150639393 / (3^{11} \cdot
      13^7 \cdot 23^7 \cdot 1667^7) , \\
      - 2 \cdot 5^7 \cdot 7^{25} \cdot 401^8 \cdot 25309\cdot 148913\cdot 395201
      / (3^{13} \cdot 13^8 \cdot 23^8 \cdot 1667^8) , \\
      2^6 \cdot 5^8 \cdot 7^{27} \cdot 17\cdot 401^9 \cdot 4278649\cdot
      127546933 / (3^{15} \cdot 13^9 \cdot 23^9 \cdot 1667^9) , \\
      - 2^2 \cdot 5^8 \cdot 7^{30} \cdot 11\cdot 61\cdot 401^{10} \cdot
      537787278082528849 / (3^{17} \cdot 13^{10} \cdot 23^{10} \cdot 1667^{10})
      .
    \end{array}
  \end{align*}
  This gives the normalized homogeneous dihedral invariants
  \begin{equation*}
    (I_1 , I_{2,0}, I_{2,1}, I_{3,3,1} , I_{3,3,2}) = \left(1, \frac{-47}{2^5},
    \frac{-1}{2^5 \cdot 3} , \frac{101}{2^6 \cdot 3} , \frac{-47}{2^{15} \cdot
    3^2}\right) ,
  \end{equation*}
  which are somewhat simpler. We have $I_{3,3,1}^2 - 4 I_{3,3,2} = 11^2 13^2 /
  2^{13} 3^2$. This defines the quadratic extension $L = \QQ (\sqrt{2})$ of the
  rational field, which therefore equals the invariant field of $f$ over the
  field of moduli $k = \QQ$. The invariant $I_{2,1}$ is not a norm from this
  extension, so we see by Lemma~\ref{lem:rgen} that no hyperelliptic descent
  exists, and hence no descent at all by Theorem~\ref{thm:MainTh1}.

  We can still use the normalized homogeneous diagonal invariants to get a
  hyperelliptic descent over invariant extension $L$. Up to
  switching $J_3$ and $J'_3$ we have
  \begin{equation*}
    (J_1 , J_{2,0}, J_{2,1}, J_3 ) = \left(1, \frac{-47}{32}, \frac{-1}{96} ,
    \frac{-143 \sqrt{2} + 202}{768} \right) ,
  \end{equation*}
  Using Corollary~\ref{cor:param} in the generic case where the
  parameter is $a_1$, we get the following hyperelliptic descent over $L$:
  \begin{equation*}
    y^2 = (-143/768 \sqrt{2} + 101/384) x^8 - 1/96 x^6 z^2 + x^4 z^4 + x^2 z^6 +
    (1716 \sqrt{2} + 2424) z^8  .
  \end{equation*}
\end{example}

\begin{example}\label{ex:desc1}
  Modifying $d_1 = 3, d_2 = 13$ in Example~\ref{ex:nondesc2} so that the
  obstruction vanishes, we do get a descent to the rationals. Explicitly, this
  descent can be constructed as follows, using Algorithm~\ref{alg:hypdesc}. This
  time the norm obstruction $r$ in~\eqref{cocyc4} equals $144/13$. We then apply
  Proposition~\ref{prop:WeilTest}, taking $\lambda = (-60 - 24 \sqrt{3})/13$ and
  $\beta = 1 / \sqrt{3}$ in the proof.  Transforming the quotient $B = \Xm / G
  \cong \PP^1_K$ into $B_0 \cong \PP^1_k$ by the $N$ from
  Proposition~\ref{prop:WeilTest}, the ramification divisor $\tT$ of $q : Q \to
  B$ transforms to $\tT_0 = [(\sqrt{3} : 1)] +  [(-\sqrt{3} : 1)]$.  Using
  Proposition~\ref{prop:cycP1}, we get the $k$-rational cover $q_0 : Q_0 = \PP^1
  \to \PP^1 = B_0$ given by
  \begin{equation*}
    (x : z) \longmapsto ( x^2 + 3 z^2 : 2 x z )  .
  \end{equation*}
  Under $N$, the divisor $\sS$ on $B$ that is the image on $B$ of branch divisor
  of $\pi_{\iota} : X \to Q$ is mapped from the zero locus of
  \begin{equation*}
    1/5184 (10309 \sqrt{3} + 17745) x^4 + 13/144 x^3 z + x^2 z^2 + x z^3 + (-244
    \sqrt{3} + 420) z^4
  \end{equation*}
  into that of
  \begin{equation*}
    38 x^4 + 320 x^3 z + 657 x^2 z^2 + 924 x z^3 + 387 z^4
  \end{equation*}
  on the canonical model $B_0 = \PP^1_{k}$ of $B$. Taking a suitable
  $k$-rational binary form vanishing on the pullback of this divisor by $q_0$,
  we obtain the hyperelliptic descent
  \begin{align*}
    \begin{array}{c}
      y^2 = 19 x^8 + 320 x^7 z + 1542 x^6 z^2 + 6576 x^5 z^3 + 12006 x^4 z^4 \\
      + 19728 x^3 z^5 + 13878 x^2 z^6 + 8640 x z^7 + 1539 z^8  .
    \end{array}
  \end{align*}
\end{example}

\begin{example}\label{ex:condesc}
  Finally, by modifying Example~\ref{ex:nondesc2} to
  \begin{equation*}
    f = a_4 x^{12} + a_3 x^9 z^3 + a_2 x^6 z^6 + a_1 x^3 z^9 + a_0 z^{12} 
  \end{equation*}
  we get a curve that does not descend hyperelliptically but which does descend
  as the cover of a conic. Our implementation of Algorithm~\ref{alg:nonhypdesc}
  returns a divisor on the conic $X^2 - 2 Y^2 + 96 Z^2 = 0$ over which we have
  to branch. The result, whose expression is slightly unwieldy, can be found
  online\footnoteref{foothyp} too; here we just mention that over the finite
  field with $43$ elements, where the hyperelliptic descent obstruction vanishes
  (as over all finite fields by Theorem~\ref{thm:MainTh2}), we obtain the
  descended equation
  \begin{align*}
    y^2 =& \; x^{12} + 25 x^{11} z + 6 x^{10} z^2 + 30 x^9 z^3 + 21 x^8 z^4 + 
    9 x^7 z^5 + 21 x^6 z^6 + \\ & \; 37 x^5 z^7 + 42 x^4 z^8 + 22 x^3 z^9 + 5 x^2
    z^{10} + 37 x z^{11} + 3 z^{12}  .
  \end{align*}
\end{example}

\section{Conclusions and remaining questions}\label{sec:conc}

In~\cite{lr2012} and \cite{lrs}, effective parametrizations of the automorphism
strata in genus $3$ were determined, which return a model over the field of
moduli as long as the reduced automorphism group is not $\CG_2$. These methods
can also be used to obtain equations for the curves with reduced automorphism
group $\CG_2$.  However, these equations can be of degree up to $8$ over the
field of moduli, which is far from optimal. The present work shows how one
calculates whether such a curve admits a (hyperelliptic) descent to the field of
moduli, and how such a descent can be determined explicitly if it exists. Even
if the curve does not descend all the way to the field of moduli, a model over
the quadratic invariant extension of this field can still be constructed
efficiently.

This concludes our explicit arithmetic exploration of the moduli space of
hyperelliptic genus $3$ curves, at least when the characteristic of the ground
field is $0$ or bigger than $7$. 
Given any tuple of Shioda invariants of a genus $3$ curve, one can now determine
\begin{itemize}
  \item the automorphism group of the curve,
  \item whether or not the curve descends to the field of moduli, and
  \item a model of the curve over its field of moduli, if it exists.
\end{itemize}
When the characteristic of the ground field is positive and less than or equal
to $7$, a nontrivial effort is already needed to find the appropriate analogues
of the Shioda invariants.

There are some open questions remaining. First of all, though we have given a
complete set of effective methods for determining when the field of moduli is a
field of definition, it remains to descend effectively if the reduced
automorphism group is either not tamely cyclic or trivial. Second, our methods
should apply to the \emph{superelliptic} curves $y^n = f(x,z)$ as well. Third,
it seems likely that the case of hyperelliptic curves in characteristic $2$ will
require completely new methods altogether.

Finally, and most intriguingly, while our perfectness hypothesis on $k$ enables
us to resolve the descent problem for most interesting ground fields (such as
number fields and finite fields), it remain to deal with imperfect base fields
$k$, as mentioned in Remark~\ref{rem:insep}.  Dealing with general ground fields
by further studying the inseparable extension in~\cite{Sekiguchi} seems to merit
a study of its own, not least towards studying the geometric nature of this
extension, which we hope to undertake in the future. Here we merely remark that
by~\cite[Th.1.6.9]{huggins-thesis}, our methods can at least be used to
determine whether or not a descent \emph{exists} in these more general cases,
while a method to explicitly determine a descent still seems to be out of reach.

\end{document}